\numberwithin{equation}{section}
\newtheorem{theorem}{Theorem}[section]
\newtheorem{lemma}[theorem]{Lemma}
\newtheorem{proposition}[theorem]{Proposition}
\newtheorem{corollary}[theorem]{Corollary}
\newtheorem{remark}[theorem]{Remark}
\newtheorem{hypothesis}[theorem]{Hypothesis}
\newcommand{\bbE}{{\ensuremath{\mathbb E}} }
\newcommand{\bbP}{{\ensuremath{\mathbb P}} }
\newcommand{\cE}{{\ensuremath{\mathcal E}} }
\newcommand{\cP}{{\ensuremath{\mathcal P}} }
\newcommand{\ga}{\alpha}
\newcommand{\gd}{\delta}
\newcommand{\gep}{\varepsilon}       
\newcommand{\gs}{\sigma}
\renewcommand{\tilde}{\widetilde}          
\DeclareMathSymbol{\leqslant}{\mathalpha}{AMSa}{"36} 
\DeclareMathSymbol{\geqslant}{\mathalpha}{AMSa}{"3E} 
\DeclareMathSymbol{\eset}{\mathalpha}{AMSb}{"3F}     
\newcommand{\dd}{\text{\rm d}}             
\newcommand{\sumtwo}[2]{\sum_{\substack{#1 \\ #2}}} 
\newcommand{\R}{\mathbb{R}}
\newcommand{\Z}{\mathbb{Z}}
\newcommand{\N}{\mathbb{N}}
\def\bs{\boldsymbol}
\newcommand{\PEfont}{\mathrm}
\DeclareMathOperator{\p}{\ensuremath{\PEfont P}}
\DeclareMathOperator{\e}{\ensuremath{\PEfont E}}
\newcommand\bP{\ensuremath{\bs{\mathrm{P}}}}
\newcommand\bE{\ensuremath{\bs{\mathrm{E}}}}
\newcommand{\ind}{{\sf 1}}
\renewcommand{\epsilon}{\varepsilon}
\renewcommand{\theta}{\vartheta}
\renewcommand{\rho}{\varrho}
\renewcommand{\phi}{\varphi}
\newenvironment{myenumerate}{%
\renewcommand{\theenumi}{\arabic{enumi}}%
\renewcommand{\labelenumi}{{\rm(\theenumi)}}%
\begin{list}{\labelenumi}
	{%
	\setlength{\itemsep}{0.4em}%
	\setlength{\topsep}{0.5em}%
	\setlength\leftmargin{2.45em}%
	\setlength\labelwidth{2.05em}%
	\setlength{\labelsep}{0.4em}%
	\usecounter{enumi}%
	}%
	}%
{\end{list}
}
\renewenvironment{enumerate}{
\begin{myenumerate}}%
{\end{myenumerate}}
\newenvironment{myitemize}{%
\begin{list}{$\bullet$}%
 	{%
	\setlength{\itemsep}{0.4em}%
	\setlength{\topsep}{0.5em}%
	\setlength\leftmargin{2.45em}%
	\setlength\labelwidth{2.05em}%
	\setlength{\labelsep}{0.4em}%
	}%
	}%
{\end{list}}
\renewenvironment{itemize}{
\begin{myitemize}}%
{\end{myitemize}}
\title[Random walk bridges conditioned to stay positive]
{An invariance principle for random walk bridges\\conditioned to stay
positive}
\author{Francesco Caravenna}
\address{Dipartimento di Matematica e Applicazioni, Universit\`a
degli Studi di Milano-Bicocca, via Cozzi 53, 20125 Milano, Italy}
\email{francesco.caravenna@unimib.it}
\author{Lo\"ic Chaumont}
\address{LAREMA -- UMR CNRS 6093, Universit\'e d'Angers, 2 bd Lavoisier, 49045 Angers cedex 01}
\email{loic.chaumont@univ-angers.fr}
\keywords{Random Walk, Bridge, Excursion, Stable Law,
L\'evy Process, Conditioning to Stay Positive, Local Limit Theorem, Invariance Principle.}
\subjclass[2010]{60G50, 60G51, 60B10}
\thanks{F.C. acknowledges the support of the University of Padova
(through grant CPDA082105/08), of the Laboratoire de Probabilit\'es et Mod\`eles Al\'eatoires 
(during a visit to Paris in the period October-December 2009, at the 
invitation of Giambattista Giacomin and Lorenzo Zambotti)
and of the ERC Advanced Grant VARIS 267356 of Frank den Hollander
(during a visit to Leiden in the period March-May 2012).
Ce travail a b\'en\'eci\'e d'une aide de l'Agence Nationale de la Recherche portant la
r\'ef\'erence ANR-09-BLAN-0084-01.
This research has been supported by the ECOS-CONACYT CNRS Research project M07-M01.}
\date{\today}
\begin{document}

\begin{abstract}
We prove an invariance principle for the bridge of a
random walk conditioned to stay positive, when the random walk
is in the domain of attraction of a stable law, both in the discrete
and in the absolutely continuous setting.
This includes as a special case the convergence under diffusive rescaling
of random walk excursions toward the normalized Brownian excursion,
for zero mean, finite variance  random walks.
The proof exploits a suitable absolute continuity relation together with
some local asymptotic estimates for random walks conditioned
to stay positive, recently
obtained by Vatutin and Wachtel~\cite{cf:VatWac} and Doney~\cite{cf:Don11}. We 
review and extend these relations to the absolutely continuous setting.
\end{abstract}

\maketitle


\section{Introduction}

Invariance principles for conditioned random walks have a long history,
going back at least to the work of Liggett~\cite{cf:Lig}, who proved that the \emph{bridge}
of a random walk in the domain of attraction
of a stable law, suitably rescaled, converges in distribution toward the bridge
of the corresponding stable L\'evy process.
This is a natural extension of Skorokhod's theorem which 
proves the same result for non conditioned random walks, cf.~\cite{cf:Sko57},
itself a generalization to the stable case of Donsker's seminal work~\cite{cf:Donsker}.

Later on, Iglehart~\cite{cf:Igl}, Bolthausen~\cite{cf:Bol76}
and Doney~\cite{cf:Don85} focused on a different type of conditioning:
they proved invariance principles for
random walks \emph{conditioned to stay positive} over a finite time interval,
obtaining as a limit the analogous conditioning for the corresponding L\'evy process,
known as \emph{meander}. More recently, such results have been extended
to the case when the random walk is conditioned to stay positive
\emph{for all time}, cf. Bryn-Jones and Doney \cite{cf:BryDon},
Caravenna and Chaumont \cite{cf:CarCha} and Chaumont and Doney \cite{cf:ChaDon}.

The purpose of this paper is to take a step further, considering the
bridge-type conditioning and the constraint to stay positive at the same time.
More precisely, given a random walk in the domain of attraction
of a stable law, we show that its bridge conditioned to stay positive,
suitably rescaled, converges in distribution toward the bridge of the
corresponding stable L\'evy process conditioned to stay positive.
A particular instance of this result, in the special case of attraction
to the normal law, has recently been obtained in Sohier~\cite{cf:Julien}.
We show in this paper that the result in the general stable case
can be proved exploiting a suitable absolute continuity relation together with
some asymptotic estimates
recently obtained in the literature, cf.~\cite{cf:VatWac} and~\cite{cf:Don11},
that we review and extend.

Besides the great theoretical interest of invariance principles
for conditioned processes, a strong motivation for our results comes from
statistical physics, with particular reference to (1+1)-dimensional polymer and pinning
models interacting with the $x$-axis, cf.~\cite{cf:Gia,cf:Gia2,cf:dH}.
From a mathematical viewpoint, these models may be viewed as perturbations
of the law of a random walk depending on its zero level set. As a consequence,
to obtain the scaling limits of such models, one needs invariance
principles for random walk excursions, that is random walk bridges conditioned
to stay positive that start and end at zero. To the best of our knowledge,
such results were previously known only for simple random walks,
cf.~\cite{cf:Kai}, and were used to obtain the scaling
limits of polymer models in~\cite{cf:CGZ,cf:CGZ2}.
In this paper we deal with bridges that start and end
 at possibly nonzero points, which makes it possible to
deal with polymer models built over non-simple random walks.

The paper is organized as follows.
\begin{itemize}
\item In section~\ref{sec:main} we
state precisely our assumptions and our main results.

\item In section~\ref{sec:fluct} we present some preparatory material on fluctuation theory.

\item Section~\ref{sec:llt}
is devoted to reviewing some important asymptotic estimates for random
walks conditioned to stay positive, in the discrete setting.

\item In section~\ref{sec:lltabs} we extend the above estimates to the absolutely continuous setting.

\item In section~\ref{sec:invpr} we prove the invariance principle.

\item Finally, some more technical details are deferred to the appendices.
\end{itemize}


\section{The invariance principle}
\label{sec:main}


\subsection{Notation and assumptions}
We set $\N := \{1,2,3,\ldots\}$ and $\N_0 := \N \cup \{0\}$.
Given two
positive sequences $(b_n)_{n\in\N}$, $(c_n)_{n\in\N}$, we write as usual
$b_n \sim c_n$ if $\lim_{n\to\infty} b_n/c_n = 1$,
$b_n = o(c_n)$ if $\lim_{n\to\infty} b_n/c_n = 0$ and
$b_n = O(c_n)$ if $\limsup_{n\to\infty} b_n/c_n < \infty$.

We recall that a positive sequence $(b_n)_{n\in\N}$
--- or a real function $b(x)$ --- is said to be \emph{regularly varying
with index $\gamma \in \R$}, denoted $(b_n)_{n\in\N} \in R_\gamma$,
if $b_n \sim n^{\gamma} \ell(n)$, where $\ell(\cdot)$ is a slowly varying
function, i.e. a positive real function
with the property that $\ell(cx)/\ell(x) \to 1$ as
$x\to+\infty$ for all fixed $c > 0$, cf.~\cite{cf:BinGolTeu} for more details.

\smallskip

Throughout this paper we deal with random walks $(S = \{S_n\}_{n\in \N_0}, \bbP)$
in the domain of attraction of a stable law. Let us write precisely this assumption.

\begin{hypothesis} \label{hyp:main}
We assume that $(S = \{S_n\}_{n\in \N_0}, \bbP)$ is
a real random walk in the domain of attraction of a (strictly) stable law
with index $\ga \in (0,2]$ and positivity parameter $\rho \in (0,1)$.
More precisely, we assume that $S_0 = 0$,
the real random variables $\{S_n - S_{n-1}\}_{n\in\N}$ are independent and
identically distributed (i.i.d.) and there exists a sequence $(a_n)_{n\in\N} \in R_{1/\ga}$
such that $S_n/a_n \Rightarrow X_1$, where $(X = \{X_t\}_{t \ge 0}, \bP)$ denotes a
stable L\'evy process with index $\ga \in (0,2]$ and positivity parameter $\rho \in (0,1)$.
\end{hypothesis}

Note that, given a random walk $(S, \bbP)$ satisfying this Hypothesis,
the limiting stable L\'evy process $(X, \bP)$ is determined only up to a multiplicative
constant. In fact, the norming sequence $a_n$ can
be multiplied by any positive constant without affecting Hypothesis~\ref{hyp:main}.

We recall the general constraint $1-\frac{1}{\alpha} \le \rho \le \frac{1}{\alpha}$
(for $\alpha \in (1,2]$, of course).
We also stress that (for $\alpha \in (0,1]$) we assume that $0 < \rho < 1$,
i.e., we exclude subordinators and cosubordinators.
The \emph{Brownian case} corresponds
to $\alpha = 2$, $\rho = \frac 12$, when the limiting L\'evy process $X$ is
(a constant times) Brownian motion. This contains the important special instance
when $\{S_n - S_{n-1}\}_{n\in\N}$ are i.i.d. zero-mean, finite-variance
random variables (the so-called \emph{normal domain of attraction of the
normal law}).

\smallskip

Let us denote by $\Omega^{RW} := \R^{\N_0}$ the discrete paths space
and by $\Omega := D([0,\infty),\R)$ the space of real-valued
c\`adl\`ag paths on $[0,\infty)$, equipped with the Skorokhod
topology, which turns it into a Polish space,
and with the corresponding Borel $\sigma$-field.
We also set $\Omega^{RW}_N := \R^{\{0, \ldots, N\}}$ and $\Omega_t := D([0,t],\R)$.
For notational simplicity,
we assume that $\bbP$ is a law on $\Omega^{RW}$
and $S = \{S_n\}_{n\in\N_0}$ is the coordinate process on this space;
we also denote by $\bbP_x$ the law of the random walk started at $x\in\R$,
i.e. the law on $\Omega_{RW}$ of $S+x$ under $\bbP$.
Analogously, we assume that $X = \{X_t\}_{t\in [0,\infty)}$
is the coordinate process on $\Omega$, that $\bP$ is a law on $\Omega$ and
we denote by $\bP_a$
the law on $\Omega$ of $X+a$ under $\bP$, for all $a\in\R$.
Finally, for every $N\in\N$ we define the rescaling map
$\phi_N : \Omega^{RW} \to \Omega$ by
\begin{equation} \label{eq:phiN}
	\big( \phi_N(S) \big)(t) := \frac{S_{\lfloor Nt\rfloor}}{a_N} \,,
\end{equation}
where $(a_N)_{N\in\N}$ is the norming sequence appearing
in Hypothesis~\ref{hyp:main}. We still denote by $\phi_N$ the
restriction of this map from $\Omega^{RW}_{Nt}$ to $\Omega_t$, for any $t > 0$.


Given $N\in \N$ and $x,y \in [0,\infty)$, by the (law of the)
\emph{random walk bridge of length $N$, conditioned to stay positive,
starting at $x$ and ending at $y$}, we mean either of the following laws on $\Omega^{RW}_N$:
\begin{align} \label{eq:bbPN+0}
	\bbP_{x,y}^{\uparrow,N}(\,\cdot\,) & :=
	\bbP_x( \,\cdot\, | S_1 \ge 0, \ldots, S_{N-1} \ge 0, S_N = y) \,, \\
	\label{eq:hatbbPN+0}
	\widehat\bbP_{x,y}^{\uparrow,N}(\,\cdot\,) & :=
	\bbP_x( \,\cdot\, | S_1 > 0, \ldots, S_{N-1} > 0, S_N = y) \,.
\end{align}
In order for the conditioning in the right hand sides of
\eqref{eq:bbPN+0} and \eqref{eq:hatbbPN+0} to be well-defined,
we work in the lattice or in the absolutely continuous setting. More precisely:

\begin{hypothesis}\label{hyp:main2}
We assume that either of the following assumptions hold:
\begin{itemize}
\item \emph{(lattice case)} The law of $S_1$ under $\bbP$ is supported
by $\Z$ and is aperiodic (i.e. it is not supported by $a \Z + b$, for any
$a \ge 2$ and $b\in\Z$).

\item \emph{(absolutely continuous case)}
The law of $S_1$ under $\bbP$ is absolutely continuous
with respect to the Lebsegue measure on $\R$, and
there exists $n\in\N$ such that
the density $f_n(x) := \bbP(S_n \in \dd x)/\dd x$ of $S_n$ is essentially bounded
(i.e., $f_n \in L^\infty$).
\end{itemize}
\end{hypothesis}

\noindent
Plainly, in the absolutely continuous case we have
$\bbP_{x,y}^{\uparrow,N} = \widehat\bbP_{x,y}^{\uparrow,N}$. We observe that
the requirement that $f_n \in L^\infty$ for some $n\in\N$ is the
standard necessary and sufficient condition
for the uniform convergence of the rescaled density
$x \mapsto a_n f_n(a_n x)$ toward the density of $X_1$,
cf. \cite[\S 46]{cf:GneKol}.
Let us also stress that the aperiodicity assumption in the lattice case is made just for ease of notation:
everything carries through to the periodic case.

Coming back to relations \eqref{eq:bbPN+0} and \eqref{eq:hatbbPN+0},
for the laws $\bbP_{x,y}^{\uparrow,N}$ and
$\widehat \bbP_{x,y}^{\uparrow,N}$ to be well-defined,
in the lattice case we need that the conditioning event
has positive probability. Analogously, in the absolutely continuous case
we require the strict positivity of the density of $S_N$ at $y$
under $\bbP_x$ and under the positivity constrain:
more precisely, denoting by $f(\cdot) = f_1(\cdot)$
the density of the random walk step $S_1$, we need that
\begin{equation}\label{eq:abscontden+}
\begin{split}
	& f_N^{+}(x,y) :=
	\frac{\bbP_x( S_1 > 0, \ldots, S_{N-1} > 0, S_N \in \dd y)}{\dd y} \\
	& = \int_{\{s_1 > 0, \ldots, s_{N-1} > 0\}}
	\left[ f(s_1 - x)  \left( \prod_{i=2}^{N-1} f(s_i - s_{i-1}) \right)
	f(y - s_{N-1}) \right] \dd s_1 \cdots \dd s_{N-1} > 0 \,.
\end{split}
\end{equation}
As a matter of fact, these conditions
will always be satisfied in the regimes for $x,y$ that we consider,
as it will be clear from the asymptotic estimates that we are going to derive.

\smallskip

Next, for $t \in (0,\infty)$ and $a,b \in [0,\infty)$,
we denote by $\bP^{\uparrow,t}_{a,b}$ the law on $\Omega_t$ corresponding to the
\emph{bridge of the L\'evy process of length $t$, conditioned to stay
positive, starting at $a$ and ending at $b$}. Informally, this law is defined
in analogy with \eqref{eq:bbPN+0} and \eqref{eq:hatbbPN+0}, that is
\begin{equation*}
	\bP^{\uparrow,t}_{a,b}(\,\cdot\,) :=
	\bP_a(\,\cdot\,| X_s \ge 0 \ \forall s \in [0,t],\ X_t = b) \,,
\end{equation*}
but we stress that some care is required to give to this definition a proper meaning,
especially in the case when either $a=0$ or $b=0$ (we refer to
section~\ref{sec:invpr} for the details).
We point out that in the Brownian case $\alpha = 2$, $\rho = \frac{1}{2}$,
when $X$ is a standard Brownian motion, $\bP^{\uparrow,1}_{0,0}$ is the
law of the so-called \emph{normalized Brownian excursion}.

\begin{remark}\rm
Let $\bbP_{x}^\uparrow$ and $\widehat\bbP_{x}^\uparrow$ denote
respectively the laws on $\Omega^{RW}$
of the random walks $(S,\bbP_{x})$ and $(S,\widehat\bbP_{x})$ conditioned
to stay positive \emph{for all time}, as defined in \cite{cf:BerDon}
(cf. also \S\ref{srwn} below). The laws
$\bbP_{x,y}^{\uparrow,N}$ and $\widehat\bbP_{x,y}^{\uparrow,N}$ may
be viewed as bridges of $\bbP_x^\uparrow$ and $\widehat\bbP_x^\uparrow$
respectively, i.e.
\begin{equation} \label{eq:abscont1}
	\bbP_{x,y}^{\uparrow,N}(\,\cdot\,) = \bbP_x^\uparrow(\,\cdot\,|S_N=y) \,,
	\qquad
	\widehat\bbP_{x,y}^{\uparrow,N}(\,\cdot\,) =
	\widehat\bbP_x^\uparrow(\,\cdot\,|S_N=y) \,.
\end{equation}
Similarly, if $\bP_a^\uparrow$ is the law on $\Omega$ of
the L\'evy process $(X,\bP_a)$ conditioned to stay positive
for all time, as it is defined in \cite{cf:ChaDon05} (cf. also \S\ref{sec:levypos} below), then
$\bP^{\uparrow,t}_{a,b}$ may be viewed as the bridge of $\bP_a^\uparrow$, i.e.
\begin{equation} \label{eq:abscont2}
	\bP^{\uparrow,t}_{a,b}(\,\cdot\,) = \bP_a^\uparrow(\,\cdot\,|X_t = b) \,.
\end{equation}
In other words, instead of first taking the bridge of a random walk,
or a L\'evy process, and then conditioning
it to stay positive, one can first condition the process to stay positive
(for all time) and then consider its bridge, the resulting process being the same.
\end{remark}

\subsection{The invariance principle}

Recalling the definition \eqref{eq:phiN} of the
(restricted) map $\phi_N : \Omega^{RW}_N \to \Omega_1$,
we denote by $\bbP_{x,y}^{\uparrow,N} \circ \phi_N^{-1}$ the law on
$\Omega_1= D([0,1], \R)$
given by the push-forward of $\bbP_{x,y}^{\uparrow,N}$ through $\phi_N$,
and analogously for $\widehat\bbP_{x,y}^{\uparrow,N}$.

If $\{x_N\}_{N\in\N}$ is a sequence in $\N_0$ such that $x_N/a_N \to a$ as $N\to\infty$,
with $a \ge 0$, it is well known \cite{cf:Sko57} that
\begin{equation} \label{eq:inv}
	\bbP_{x_N} \circ \phi_N^{-1} \Longrightarrow \bP_a \,,
\end{equation}
where ``$\Longrightarrow$'' denotes weak convergence.
Moreover, in \cite{cf:CarCha} we have proved that
\begin{equation} \label{eq:inv+}
	\bbP_{x_N}^\uparrow \circ \phi_N^{-1} \Longrightarrow \bP_a^\uparrow \,.
\end{equation}
Our first result asserts that such an invariance principle also holds for bridges
conditioned to stay positive or non-negative.

\begin{theorem}\label{th:main}
Assume that Hypothesis~\ref{hyp:main} and~\ref{hyp:main2} are satisfied.
Let $a, b \in [0,\infty)$ and let $(x_N)_{N\in\N}, (y_N)_{N\in\N}$
be two non-negative sequences such that $x_N/a_N \to a$ and $y_N / a_N \to b$
as $N\to\infty$ (in the lattice case, we assume that
$x_N, y_N \in \N_0$ for all $N \in\N$). Then as $N\to\infty$
\begin{equation} \label{eq:main}
	\bbP_{x_N,y_N}^{\uparrow,N} \circ \phi_N^{-1}
	\Longrightarrow \bP_{a,b}^{\uparrow,1} \,, \qquad
	\widehat\bbP_{x_N,y_N}^{\uparrow,N} \circ \phi_N^{-1}
	\Longrightarrow \bP_{a,b}^{\uparrow,1} \,.
\end{equation}
\end{theorem}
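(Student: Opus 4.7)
The plan is to reduce the bridge statement to the already proven invariance principle \eqref{eq:inv+} for the infinite-time conditioning, exploiting the identities \eqref{eq:abscont1}--\eqref{eq:abscont2} which present $\bbP_{x,y}^{\uparrow,N}$ and $\bP^{\uparrow,1}_{a,b}$ as the respective laws conditioned on the value at the terminal time. The scheme is to prove weak convergence on each sub-interval $[0,t]$ with $t\in(0,1)$ via an absolute continuity argument between bridge and non-bridge, to extend to $[1-t,1]$ by time reversal, and to glue the two ends together with a standard tightness argument.

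\textbf{Absolute continuity and the local limit theorem.} For any $t\in(0,1)$, set $m = \lfloor Nt\rfloor$. The Markov property of $\bbP_{x_N}^\uparrow$ gives, on the $\sigma$-field generated by $S_0,\dots,S_m$,
\begin{equation*}
	\frac{\dd \bbP_{x_N,y_N}^{\uparrow,N}}{\dd \bbP_{x_N}^{\uparrow}} \;=\; \frac{p^{\uparrow}_{N-m}(S_m,\,y_N)}{p^{\uparrow}_N(x_N,\,y_N)}\,,
\end{equation*}
where $p^{\uparrow}_n$ denotes the $n$-step transition kernel of $\bbP^{\uparrow}$ (a mass function in the lattice case, a density in the absolutely continuous case). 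The analogous identity holds for $\bP^{\uparrow,1}_{a,b}$ with respect to $\bP_a^{\uparrow}$, with ratio $q^{\uparrow}_{1-t}(X_t,b)/q^{\uparrow}_1(a,b)$ of L\'evy transition densities. The asymptotic estimates reviewed and extended in sections~\ref{sec:llt}--\ref{sec:lltabs} provide precisely the uniform convergence of $a_N\, p^{\uparrow}_N(x_N,y_N)$ to $q^{\uparrow}_1(a,b)$, together with the corresponding statement along the sequence $S_m/a_N$. Combining this with \eqref{eq:inv+} and a uniform integrability argument (obtained from the same estimates applied as upper bounds), I conclude
\begin{equation*}
	\bbP_{x_N,y_N}^{\uparrow,N} \circ \phi_N^{-1} \big|_{\Omega_t} \Longrightarrow \bP^{\uparrow,1}_{a,b} \big|_{\Omega_t}\,, \qquad \text{for every } t\in(0,1).
\end{equation*}

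\textbf{Right endpoint via time reversal.} The previous step leaves the behavior near $t=1$ uncontrolled. I would handle this by exploiting the symmetry between the two endpoints of the bridge: the time-reversed process $\hS_k := S_{N-k}$ under $\bbP_{x_N,y_N}^{\uparrow,N}$ is distributed as the bridge of the reflected walk conditioned to stay positive with the roles of $x_N$ and $y_N$ swapped, and an analogous statement holds for $\bP^{\uparrow,1}_{a,b}$. Applying the absolute-continuity argument to the reversed bridge yields convergence on $[1-t,1]$ for every $t\in(0,1)$, and tightness in $\Omega_1$ follows by dominating the oscillations of the bridge via the absolute continuity on each of the two sides (against $\bbP_{x_N}^\uparrow$, for which tightness is known). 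The same scheme applies verbatim to $\widehat\bbP_{x_N,y_N}^{\uparrow,N}$, since the asymptotic estimates of section~\ref{sec:llt} do not distinguish between strict and non-strict positivity.

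\textbf{Main obstacle.} The delicate point is the singular regime $a=0$ or $b=0$, where one or both rescaled endpoints hit the boundary of the state space in the limit. There the law $\bP^{\uparrow,1}_{a,b}$ itself has to be defined through an additional limiting procedure (whence the care taken in section~\ref{sec:invpr}), and the local limit theorem must be applied to endpoints of order $o(a_N)$ rather than of order $a_N$. This is exactly the regime for which the refinements of Vatutin--Wachtel and Doney are needed, and making the absolute-continuity step uniform across these boundary regimes -- in particular, extracting the correct joint continuity of the ratios $q^{\uparrow}_{1-t}(\cdot,b)/q^{\uparrow}_1(a,b)$ -- is the bulk of the technical work. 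A further annoyance specific to the absolutely continuous case is to verify that $f_N^{+}(x,y)$ in \eqref{eq:abscontden+} is continuous and strictly positive in the relevant regime; this reduces, via the convolution structure and Hypothesis~\ref{hyp:main2}, to analogous statements for a single convolution power.
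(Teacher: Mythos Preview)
Your proposal is correct and follows essentially the same route as the paper: absolute continuity of the bridge with respect to $\bbP_{x_N}^\uparrow$ on $\sigma(S_0,\ldots,S_{\lfloor Nt\rfloor})$, convergence of the Radon--Nikodym ratio via the local estimates of sections~\ref{sec:llt}--\ref{sec:lltabs} (the paper packages this as the uniform convergence $f_\epsilon^{(N)}\to f_\epsilon$ in Lemma~\ref{th:uniff}), and time reversal (Lemmas~\ref{eq:reversal1}--\ref{eq:reversal2}) to control the right endpoint and obtain tightness on $\Omega_1$. One cosmetic difference: the paper disposes of the case $a,b>0$ directly via Liggett's bridge invariance principle rather than running the absolute continuity machinery there, which saves a little work since the conditioning event has positive limiting probability.
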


\noindent
Let us note that, by an easy scaling argument,
this invariance principle immediately generalizes to bridges of any time length.

In the Brownian case $\alpha = 2$, $\rho = \frac{1}{2}$,
the process $\bP^{\uparrow, t}_{a,b}$ has continuous paths.
In this situation, it is standard to pass from
weak convergence on $D([0,1],\R)$ to weak convergence
on $C([0,1],\R)$, the space of
real-valued continuous functions defined on $[0,1]$, endowed with the topology
of uniform convergence and with the corresponding Borel $\sigma$-field.
For this purpose, we introduce the map $\psi_N : \Omega^{RW}_N
\to C([0,1],\R)$, analogous to $\phi_N$ defined in \eqref{eq:phiN},
but corresponding to \emph{linear interpolation}, i.e.
\begin{equation*}
	\big(\psi_N(S)\big)(t) \,:=\, \frac{(1 + \lfloor Nt \rfloor - Nt)S_{\lfloor Nt\rfloor}
	+ (Nt- \lfloor Nt \rfloor)S_{\lfloor Nt\rfloor + 1}}{a_N} \,,
	\qquad \forall t \in [0,1]\,.
\end{equation*}
For ease of reference, we state an important special case of Theorem~\ref{th:main}.

\begin{corollary}[Brownian case]
Let $(\{X_n\}_{n\in\N}, \bbP)$ be i.i.d. real random variables with zero mean and unit
variance and let $S_0 = 0$, $S_n = S_{n-1} + X_n$, $n\in\N$,
be the associated random walk, so that Hypothesis~\ref{hyp:main} is satisfied
with $a_N = \sqrt{N}$ and $X$ a standard Brownian motion.
Assume that Hypothesis~\ref{hyp:main2} is satisfied
and let $(x_N)_{N\in\N}$, $(y_N)_{N\in\N}$ be non-negative sequences that are
both $o(\sqrt{N})$ (in the lattice case, also assume that
$x_N, y_N \in \N_0$ for all $N \in\N$).

Then, as $N\to\infty$,
\begin{equation*}
	\bbP_{x_N,y_N}^{\uparrow,N} \circ \psi_N^{-1} \Longrightarrow \bP_{0,0}^{\uparrow,1}\,,
	\qquad
	\widehat\bbP_{x_N,y_N}^{\uparrow,N} \circ \psi_N^{-1}
	\Longrightarrow \bP_{0,0}^{\uparrow,1} \,.
\end{equation*}
In words: the random walk bridge of length $N$
conditioned to stay positive or non-negative, starting at $x_N$
and ending at $y_N$, under linear interpolation and
diffusive rescaling, converges in distribution
on $C([0,1],\R)$ toward the normalized Brownian excursion.
\end{corollary}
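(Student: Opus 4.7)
The plan is to derive this corollary as an essentially immediate consequence of Theorem~\ref{th:main}, with the only additional work being a standard topological upgrade from $J_1$ convergence on $D([0,1],\R)$ to uniform convergence on $C([0,1],\R)$.

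First, I would check that Theorem~\ref{th:main} applies. In the setting of the corollary, Hypothesis~\ref{hyp:main} holds with $\alpha=2$, $\rho=1/2$, norming sequence $a_N=\sqrt N$ and limiting L\'evy process $X$ equal to a standard Brownian motion; in particular $\bP_{0,0}^{\uparrow,1}$ is by definition the law of the normalized Brownian excursion, which is supported on $C([0,1],\R)$. The assumptions $x_N,y_N=o(\sqrt N)$ translate exactly to $x_N/a_N\to 0$ and $y_N/a_N\to 0$, so Theorem~\ref{th:main} gives
\[
  \bbP_{x_N,y_N}^{\uparrow,N}\circ\phi_N^{-1}\Longrightarrow\bP_{0,0}^{\uparrow,1},\qquad
  \widehat\bbP_{x_N,y_N}^{\uparrow,N}\circ\phi_N^{-1}\Longrightarrow\bP_{0,0}^{\uparrow,1}
\]
in the Skorokhod ($J_1$) topology on $\Omega_1=D([0,1],\R)$.

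Next, I would compare the cadlag path $\phi_N(S)$ with its continuous linear interpolation $\psi_N(S)$. Since $\phi_N(S)$ is piecewise constant, with value $S_k/\sqrt N$ on $[k/N,(k+1)/N)$, an elementary pointwise estimate yields
\[
  \bigl\|\phi_N(S)-\psi_N(S)\bigr\|_\infty
  \;\le\; J\bigl(\phi_N(S)\bigr)\;=\;\frac{\max_{1\le k\le N}|S_k-S_{k-1}|}{\sqrt N},
\]
where $J(\omega):=\sup_{t\in[0,1]}|\omega(t)-\omega(t-)|$ denotes the maximal-jump functional on $D([0,1],\R)$. The point is that $J$ is continuous at every continuous path (because $J_1$ convergence to a continuous limit forces uniform convergence, and then the inequality $J(\omega_n)\le 2\|\omega_n-\omega\|_\infty$ forces $J(\omega_n)\to 0$). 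Since $\bP_{0,0}^{\uparrow,1}$ is supported on $C([0,1],\R)$, the continuous mapping theorem applied to the convergences above gives $J(\phi_N(S))\to 0$ in probability, hence
$\|\phi_N(S)-\psi_N(S)\|_\infty\to 0$ in probability, under both $\bbP_{x_N,y_N}^{\uparrow,N}$ and $\widehat\bbP_{x_N,y_N}^{\uparrow,N}$.

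Finally, I would combine these two ingredients. Since the limit is supported on continuous paths, the $J_1$ convergence of $\phi_N(S)$ may be promoted via the Skorokhod representation theorem to uniform convergence on a common probability space; the sup-norm bound above then transfers this to uniform convergence of $\psi_N(S)$, and since $\psi_N(S)$ takes values in $C([0,1],\R)$ this yields weak convergence in $C([0,1],\R)$ with the uniform topology, which is exactly the asserted statement. There is no real obstacle here: the substantive content is already contained in Theorem~\ref{th:main}, and the remaining steps are standard manipulations exploiting that $J_1$ convergence with a continuous limit is equivalent to uniform convergence.
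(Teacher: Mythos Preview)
Your proposal is correct and follows exactly the approach the paper indicates: the paper does not give a separate proof of the corollary but simply remarks that in the Brownian case the limit $\bP_{0,0}^{\uparrow,1}$ has continuous paths, so that ``it is standard to pass from weak convergence on $D([0,1],\R)$ to weak convergence on $C([0,1],\R)$.'' Your maximal-jump argument is precisely a clean way to fill in this standard passage.
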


The proof of Theorem~\ref{th:main} bears on the absolute continuity
of $\bbP_{x_N,y_N}^{\uparrow,N}$ with respect to
$\bbP_{x_N}^\uparrow$, cf. \eqref{eq:abscont1},
and exploits the convergence \eqref{eq:inv+}. In order to apply
these arguments, we need a uniform control
of the Radon-Nikodym density of $\bbP_{x_N,y_N}^{\uparrow,N}$
with respect to $\bbP_{x_N}^\uparrow$. This requires precise local estimates
of the kernel
$f_N^+(x,y)$, cf. \eqref{eq:abscontden+}, in the absolutely continuous case,
and of the analogous kernels $q_N^+(x,y)$ and $\widehat q_N^+(x,y)$ in the lattice case:
\begin{equation} \label{eq:q}
\begin{split}
	q_N^+(x,y) & := \bbP_x(\widehat \tau_1^- \ge N,\, S_N = y) =
	\bbP_x(S_1 \ge 0, \ldots, S_{N-1} \ge 0, S_N = y) \,, \\
	\widehat q_N^+(x,y) & := \bbP_x(\tau_1^- \ge N,\, S_N = y) =
	\bbP_x(S_1 > 0, \ldots, S_{N-1} > 0, S_N = y) \,.
\end{split}
\end{equation}

In the lattice case, such local limit theorems have been
proved by Vatutin and Wachtel~\cite{cf:VatWac} and Doney~\cite{cf:Don11}
and are reviewed in Proposition \ref{th:xq}.
The proof of the local limit theorems
for $f_N^+(x,y)$, in the absolutely continuous case,
is the second main result of this paper, cf. Theorem~\ref{th:xqabscont}
in section~\ref{sec:lltabs}. This is obtained from the Stone version of the local limit theorems,
also proved in \cite{cf:VatWac,cf:Don11},
through a careful approximating procedure.

We point out that our approach differs from that
of Sohier \cite{cf:Julien}
in the Brownian case, where the weak convergence of the sequence
$\bbP_{x_N,y_N}^{\uparrow,N}$ is established, in a more classical way, proving
tightness and convergence of the finite dimensional distributions.

\begin{remark}\rm
For the asymptotic behavior of $f_N^+(x,y)$ in the absolutely
continuous case, we need a suitable condition, linked to \emph{direct Riemann integrability}, on
a convolution of the random walk step density $f(\cdot)$
(cf. section~\ref{sec:lltabs}
for details, in particular \eqref{eq:dRihyp}).
This is a very mild condition, which is immediately checked if, e.g.,
there exist $C > 0$, $n\in\N$ and $\epsilon > 0$ small enough
such that $|f_n(x)| \le C/|x|^{1+\alpha - \epsilon}$
for every $x \in \R$. As a matter of fact, it turns out that
this condition is automatically satisfied with no further assumption beyond
Hypotheses~\ref{hyp:main} and~\ref{hyp:main2}, as it is proved in~\cite{cf:Car2}.
\end{remark}

\section{Preparatory Material}

\label{sec:fluct}

\subsection{An important notation on sequences}
\label{sec:notation}

We will frequently deal with sequences $(b_n(z))_{n\in\N}$ indexed by a real parameter $z$.
Given a family of subsets $V_n \subseteq \R$, we write
\begin{equation} \label{eq:unifnot}
	\text{``}b_n(z) = o(1) \ \ \text{uniformly for } z \in V_n\text{''}
	\qquad \text{to mean} \qquad \lim_{n\to\infty} \, \sup_{z \in V_n} |b_n(z)| = 0 \,.
\end{equation}
We stress that this is actually equivalent to the seemingly weaker condition
\begin{equation*}
	\lim_{n\to\infty} b_n(z_n) = 0  \ \ \text{for any fixed sequence $(z_n)_{n\in\N}$
	such that $z_n \in V_n$ for all $n\in\N$}\,,
\end{equation*}
as one checks by contradiction.
We also note that, by a subsequence argument,
to prove such a relation
it is sufficient to consider sequences $(z_n)_{n\in\N}$
(such that $z_n \in V_n$ for all $n\in\N$) that converge to a
(possibly infinite) limit, i.e. such that $z_n \to c \in \R \cup \{\pm\infty\}$.

Given $(b_n(z))_{n\in\N}$, with $z \in \R$,
and a fixed positive sequence $(a_n)_{n\in\N}$,
it is sometimes customary to write
\begin{align} \label{eq:blablabla}
	\text{``$b_n(z) = o(1)$ \ uniformly for $z = o(a_n)$''}
\end{align}
as a shorthand for
\begin{equation} \label{eq:blablabla2}
	\text{$b_n(z) = o(1)$ \ uniformly for $z \in [0,\epsilon_n]$,
	for any fixed sequence $\epsilon_n = o(a_n)$} \,.
\end{equation}
Again, this is equivalent to the apparently weaker statement
\begin{align} \label{eq:blablasimple}
	\text{$b_n(z_n) = o(1)$ \ for any fixed sequence $z_n = o(a_n)$} \,,
\end{align}
as an easy contradiction argument shows. The formulation \eqref{eq:blablabla}--\eqref{eq:blablabla2}
is usually preferred
when stating and applying theorems, while \eqref{eq:blablasimple} is nicer to handle when proving them.

In the sequel, we sometimes write $(const.)$,
$(const.')$ to denote generic positive constants, whose value
may change from place to place.

\subsection{Fluctuation theory for random walks}
\label{sec:fluctutation}

For the purpose of this subsection, we only assume that
$(S=\{S_n\}_{n\ge 0}, \bbP)$ is a real random walk starting at zero: this means that
$S_0 = 0$ a.s. and $(\{S_n - S_{n-1}\}_{n\ge 1}, \bbP)$ are i.i.d. real random
variables. To avoid degeneracies, we assume that the walk is not constant, i.e.,
$\bbP(S_1 = c)<1$ for all $c\in\R$.

We denote by $\{\tau_k^\pm\}_{k\ge 0}$ and $\{H_k^\pm\}_{k\ge 0}$ the \emph{weak}
ascending ($+$) and descending ($-$) ladder epoch and ladder height processes respectively,
that is $\tau_0^\pm := 0$, $H_0^\pm := 0$ and for $k\ge 1$
\begin{equation*}
	\tau_{k}^\pm := \inf \big\{ n > \tau_{k-1}^\pm :\ \pm S_n \ge \pm S_{\tau_{k-1}^\pm} \big\} \,,
	\qquad H_k^\pm := \pm S_{\tau_k^\pm} \,.
\end{equation*}
Note that $H_k^-$ is a non-negative random variable.
In fact, $\{\tau_k^\pm\}_{k\ge 0}$ and $\{H_k^\pm\}_{k\ge 0}$
are \emph{renewal processes}, i.e., random walks with
i.i.d. non-negative increments.
The \emph{strict} ascending ($+$) and descending ($-$)
ladder epoch and ladder height processes
$\{\widehat\tau_k^\pm\}_{k\ge 0}$ and $\{\widehat H_k^\pm\}_{k\ge 0}$ are defined replacing
the weak inequality $\ge$ by the strict one $>$ in the preceding display.

We set $\zeta := \bbP(H_1^+ = 0) \in [0,1)$ and we note
that we also have $\zeta = \bbP(H_1^- = 0)$. In fact for all $n\in\N$ we can write
\begin{equation*}
\begin{split}
	\bbP(\tau_1^-=n, H_1^-=0) & = \bbP(S_1 > 0, \ldots, S_{n-1}>0, S_n=0) \\
	& = \bbP(S_1<0, \ldots, S_{n-1}<0, S_n=0) = 	\bbP(\tau_1^+=n, H_1^+=0) \,,
\end{split}
\end{equation*}
by time reversal, i.e., observing that
the process $\{S_{n}-S_{n-k}\}_{0 \le k \le n}$
has the same law as $\{S_{k}\}_{0 \le k \le n}$. It is also easy to check that
the laws of $H_1^\pm$ and $\widehat H_1^\pm$ are closely related:
\begin{equation*}
	\bbP(H_1^\pm \in \dd x) = \zeta \gd_{0}(\dd x) + (1-\zeta)
	\bbP(\widehat H_1^\pm \in \dd x) \,.
\end{equation*}

We denote by $V^\pm(\cdot)$ the weak ascending ($+$) and descending
($-$) renewal function, defined for $x \ge 0$ by
\begin{equation} \label{eq:defV}
	V^\pm(x) = \bbE \big[ \#\{k\ge 0: H_k^\pm \le x\}\big] = \sum_{k = 0}^\infty
	\bbP(H_k^\pm \le x) = \sum_{k=0}^\infty \sum_{n=0}^\infty
	\bbP(\tau_k^\pm = n, \pm S_n \le x) \,.
\end{equation}
Note that $V^\pm(0) = \sum_{k=0}^\infty \bbP( H_k^\pm = 0)
= \sum_{k=0}^\infty \zeta^k = (1-\zeta)^{-1}$.
Analogously, we can define $\widehat V^\pm(x)$ for $x \ge 0$ by
\begin{equation} \label{eq:defVhat}
	\widehat V^\pm(x) = \bbE \big[ \#\{k\ge 0: \widehat H_k^\pm \le x\}\big]
	= \sum_{k = 0}^\infty \bbP(\widehat H_k^\pm \le x) = \sum_{k=0}^\infty \sum_{n=0}^\infty
	\bbP(\widehat \tau_k^\pm = n, \pm S_n \le x) \,,
\end{equation}
and note that $\widehat V^\pm(0) = 1$. As a matter of fact,
the following relation holds:
\begin{equation} \label{eq:VhatV}
	\widehat V^\pm(x) = (1-\zeta) V^\pm(x)\,, \qquad \forall x \ge 0 \,,
\end{equation}
(cf. equation~(1.13) in \cite[\S~XII.1]{cf:Fel2}),
therefore working with $V^\pm(x)$ or $\widehat V^\pm(x)$ is equivalent.
Note that both functions $V^\pm(\cdot)$ and $\widehat V^\pm(\cdot)$ are
non-decreasing and right-continuous.

Finally, we introduce two \emph{modified} renewal functions $\underline V^\pm(\cdot)$
and $\underline {\widehat V}^\pm(\cdot)$. For strictly positive values
of $x > 0$ we set
\begin{gather} \label{eq:defV-}
	\underline V^\pm(x) = \bbE \big[ \#\{k\ge 0: H_k^\pm < x\}\big]
	= \sum_{k = 0}^\infty \bbP( H_k^\pm < x) = \sum_{k = 0}^\infty
	\sum_{n=0}^\infty \bbP( \tau_k^\pm = n, \pm S_n < x) \,, \\
	\label{eq:defV-hat}
	\underline {\widehat V}^\pm(x) = \bbE \big[ \#\{k\ge 0: \widehat H_k^\pm < x\}\big]
	= \sum_{k = 0}^\infty \bbP( \widehat H_k^\pm < x) = \sum_{k = 0}^\infty
	\sum_{n=0}^\infty \bbP( \widehat \tau_k^\pm = n, \pm S_n < x) \,.
\end{gather}
Note that that these functions
are \emph{left-continuous} on $(0,\infty)$.
They can be recovered from the previously introduced ones
through a simple limiting procedure: for every $x > 0$
\begin{equation} \label{eq:leftco}
	\underline V^\pm(x) = V^\pm(x-) :=
	\lim_{\gep \downarrow 0}V^\pm(x-\gep) \,,
	\quad \ \
	\underline {\widehat V}^\pm(x) = \widehat V^\pm(x-) :=
	\lim_{\gep \downarrow 0} {\widehat V}^\pm(x-\gep) \,.
\end{equation}
We stress that in the lattice case, when the law of $S_1$ is supported by $\Z$,
we simply have $\underline V^\pm(x) = V^\pm(x-1)$
and $\underline {\widehat V}^\pm(x) = {\widehat V}^\pm(x-1)$ for every $x \in \N$
($x \ge 1$).
We complete the definition of the functions $\underline V^\pm(x)$
and $\underline {\widehat V}^\pm(x)$ by setting, for $x = 0$,
\begin{equation} \label{eq:VV+0}
	\underline V^\pm(0) := 1 \,, \qquad
	\underline {\widehat V}^\pm(0) := 1-\zeta = \bbP(H_1^\pm > 0) \,,
\end{equation}
so that a relation completely analogous to \eqref{eq:VhatV} holds:
\begin{equation} \label{eq:underVhatV}
	\underline {\widehat V}^\pm(x) = (1-\zeta)
	{\underline V}^\pm(x)\,, \qquad \forall x \ge 0 \,.
\end{equation}

The reason for introducing the modified renewal functions $\underline V^\pm(\cdot)$
and $\underline {\widehat V}^\pm(\cdot)$
is explained by the following Lemma, proved in Appendix~\ref{sec:proofharm}.
We recall that $\bbP_x$ denotes the law of $S$ started at $x \in \R$, that is
$\bbP_x(S \in \cdot) = \bbP(S+x \in \cdot)$.


\smallskip
\begin{lemma} \label{th:harm}
Assume that the random walk
$(\{S_n\}_{n\ge 0}, \bbP)$ does not drift to $-\infty$, that is
$\limsup_k S_k = + \infty$, $\bbP$--a.s.. Then the function
$V^-(\cdot)$ is invariant for the semigroup of
the random walk killed when it first enters the negative half-line
$(-\infty, 0)$, i.e.,
\begin{equation} \label{eq:V}
    V^-(x) = \bbE_x \big( V^-(S_N)\, \ind_{\{S_1 \ge 0, \ldots, S_N \ge 0\}} \big)
    \qquad \forall x \ge 0 , N \in \N\,.
\end{equation}
Analogously, the function $\underline V^-(x)$ is invariant for the semigroup of
the random walk killed when it first enters the non-positive half-line
$(-\infty, 0]$:
\begin{equation} \label{eq:V-}
	\underline V^-(x) = \bbE_x \big( \underline V^-(S_N)\, \ind_{\{S_1 > 0, \ldots, S_N > 0\}} \big)
    \qquad \forall x \ge 0, N \in \N\,.
\end{equation}
\end{lemma}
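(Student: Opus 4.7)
By the Markov property, it suffices to prove the $N=1$ case of each identity and then iterate on $N$. The plan for \eqref{eq:V} is to express both sides as expected counts of weak descending ladder times of the walk with a certain position constraint, and to identify the difference as a single Wiener--Hopf-type probability that the no-drift-to-$-\infty$ hypothesis forces to equal $1$.

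Let $\cT^-:=\{n\ge 0:S_n\le S_j\;\forall\,0\le j<n\}$ denote the set of weak descending ladder epochs from $S_0$. Directly from \eqref{eq:defV} and $H_k^-=S_0-S_{\tau_k^-}$ under $\bbP_x$, one has $V^-(x)=\bbE_x[\#\{n\in\cT^-:S_n\ge 0\}]$. For the right-hand side of \eqref{eq:V} at $N=1$, I would expand $V^-(S_1)=\sum_k\bbP(H_k^{-,Y}\le S_1)$ in terms of an independent copy $Y$ of the walk with ladder heights $H_k^{-,Y}$, and then concatenate $S_1$ with $Y$ into a single walk $\tilde S$ (set $\tilde S_0=0$, $\tilde S_1=S_1$, $\tilde S_{j+1}=S_1+Y_j$). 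Since $\tilde S\stackrel{d}{=}S$ and the shifted set $\{1+\tau_k^{-,Y}:k\ge 0\}$ is precisely $\cT^-_{\ge 1}:=\{j\ge 1:\tilde S_j\le\tilde S_\ell\;\forall\,1\le\ell<j\}$, this rewrites as $\bbE_x[V^-(S_1)\ind_{S_1\ge 0}]=\bbE_x[\#\{j\in\cT^-_{\ge 1}:S_j\ge 0\}]$. Using $\cT^-\cap[1,\infty)=\{n\in\cT^-_{\ge 1}:S_n\le S_0=x\}$ together with the $n=0$ contribution, and splitting the right count according to whether $S_j\le x$ or $S_j>x$, one is left with
\[
V^-(x)-\bbE_x[V^-(S_1)\ind_{S_1\ge 0}]\;=\;1-\bbE_x\bigl[\#\{j\in\cT^-_{\ge 1}:S_j>x\}\bigr].
\]
The decisive step is evaluating this remaining expectation. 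Translation invariance in $x$ reduces it to $\bbE_0[\#\{j\in\cT^-_{\ge 1}:S_j>0\}]$, and a time reversal applied to the path $(S_0,\dots,S_j)$ identifies $\{j\in\cT^-_{\ge 1},\,S_j>0\}$ in law with $\{S_1\le 0,\dots,S_{j-1}\le 0,\,S_j>0\}=\{\widehat\tau_1^+=j\}$. Summing over $j\ge 1$ therefore gives $\bbP(\widehat\tau_1^+<\infty)$, which equals $1$ precisely because the walk does not drift to $-\infty$.

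The identity \eqref{eq:V-} is proved by exactly the same scheme with $\le 0$ and $\ge 0$ replaced by $<0$ and $>0$ throughout. The only subtlety is the boundary value $x=0$, where the analogous expected-count representation of $\underline V^-$ evaluates to $0$, so the convention $\underline V^-(0):=1$ from \eqref{eq:VV+0} must be checked separately against the requirement $\bbE[\underline V^-(X_1)\ind_{X_1>0}]=1$; but this is exactly the identity $\sum_k\bbP(X_1>H_k^-)=1$ produced by the same time-reversal argument. The principal obstacle throughout is avoiding circularity: substituting the definition of $V^-$ into $\bbE_x[V^-(S_1)\ind_{S_1\ge 0}]$ and applying a single time reversal or reindexing typically returns this expectation as a tautology, and what finally isolates $\bbP(\widehat\tau_1^+<\infty)=1$ as the single substantive input from the hypothesis is the combination of the concatenation identification (which recasts the $H_k^{-,Y}$-count as a count of post-$1$ weak descending records of the full walk) with the subsequent time reversal on a path of length $j$.
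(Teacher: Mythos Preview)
Your argument is correct. The slip ``$\tilde S_0=0$'' should read ``$\tilde S_0=x$'' under $\bbP_x$ (otherwise the first increment of $\tilde S$ is $S_1=x+X_1$, not $X_1$), but this is harmless since $\cT^-_{\ge 1}$ only looks at times $j\ge 1$ and you later use $S_0=x$ correctly.

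Your route and the paper's share the same core input --- duality/time reversal plus $\bbP(\widehat\tau_1^+<\infty)=1$ (equivalently $\bbP(\tau_1^+<\infty)=1$) --- but are organized differently. The paper first proves the $x=0$ case of \eqref{eq:V} by expanding $\widehat V^-(S_1)$ and applying the Duality Lemma to telescope to $\sum_m\bbP(\tau_1^+=m)=1$, and then treats $x>0$ by a separate structural argument: conditioning $\#\{k:\widehat H_k^-\in[0,x]\}$ on $S_1$ and using the renewal property of the ladder height process to reduce back to the $x=0$ identity. It then obtains \eqref{eq:V-} for $x>0$ from \eqref{eq:V} by taking left limits (monotone convergence), checking only $x=0$ separately. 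Your expected-count representation handles all $x\ge0$ uniformly in one stroke, isolating the difference as $1-\bbP(\widehat\tau_1^+<\infty)$; this is a bit more symmetric and avoids the case split, at the cost of redoing the argument for $\underline V^-$ rather than deducing it by limits. Both executions are short; yours makes the role of the no-drift hypothesis perhaps more transparent.
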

\smallskip

We note that by the symmetry
$S \to -S$ it follows immediately from \eqref{eq:V} and \eqref{eq:V-} that
if the random walk $(\{S_n\}_{n\ge 0}, \bbP)$ does not drift to $+\infty$ we have
\begin{align} \label{eq:Valt}
	V^+(x) & = \bbE_{-x} \big( V^+(-S_N)\, \ind_{\{S_1 \le 0, \ldots, S_N \le 0\}} \big)
	\qquad \forall x \ge 0, N \in \N \,, \\
	\label{eq:V-alt}
	\underline V^+(x) & = \bbE_{-x} \big( \underline V^+(-S_N)\,
	\ind_{\{S_1 < 0, \ldots, S_N < 0\}} \big)
	\qquad \forall x \ge 0, N \in \N \,.	
\end{align}
Also note that, in all the relations \eqref{eq:V}, \eqref{eq:V-},
\eqref{eq:Valt}, \eqref{eq:V-alt}, one can replace $V^\pm(\cdot)$
by ${\widehat V}^\pm(\cdot)$ and $\underline V^\pm(\cdot)$
by $\underline {\widehat V}^\pm(\cdot)$ respectively, thanks to
\eqref{eq:VhatV} and \eqref{eq:underVhatV}.

\begin{remark}\rm \label{th:otherproof}
A proof of Lemma~\ref{th:harm} is implicit in part~2.3 of \cite{cf:Ber1}.
The fact that the invariant function
$h^\uparrow$ in that paper coincides with $\underline{\widehat V}^-$
follows from the celebrated \emph{Duality Lemma}, cf. Chapter~XII
in~\cite{cf:Fel2}: for any $n\in\N$
and any Borel subset $I \subset (0, \infty)$
\begin{equation*}
\begin{split}
	\sum_{k\ge 0} \bbP \big( \widehat \tau^\pm_k = n,\, \pm S_n \in I \big)
	= \bbP \big( \pm S_1 > 0,\ldots, \pm S_n > 0 ,\, \pm S_n \in I \big)
	= \bbP \big( \tau^{\mp}_1 > n ,\, \pm S_n \in I \big) \,,\\
	\sum_{k\ge 0} \bbP \big( \tau^\pm_k = n,\, \pm S_n \in I \big)
	= \bbP \big( \pm S_1 \ge 0,\ldots, \pm S_n \ge 0 ,\, \pm S_n \in I \big)
	= \bbP \big( \widehat \tau^{\mp}_1 > n ,\, \pm S_n \in I \big) \,.
\end{split}
\end{equation*}
We also refer to part~2 of~\cite{cf:BerDon}.
\end{remark}


\subsection{Some consequences of our assumptions.}
\label{sec:conseq}

With the notation of Hypothesis~\ref{hyp:main}, let $g(\cdot)$
the density of the random variable
$X_1$. We denote by $g^+(\cdot)$
the density of the time-one marginal distribution of the meander
\cite{cf:Cha97} of the L\'evy process
$X$, which can be defined informally by
$g^{+}(x) \, \dd x = \bP_0(X_{1} \in \dd x \,|\, \inf_{0\le s \le 1}X_{s} \ge 0)$, see Lemma 4 in 
\cite{cf:ChaDon}.
Analogously, $g^-(\cdot)$ is the density of the time-one marginal distribution of the meander
of $-X$.

%

\smallskip

By classical results \cite{cf:GreOmeTeu,cf:DonGre,cf:Don85},
when Hypothesis~\ref{hyp:main} holds, the
random vectors $(\tau_1^+, H_1^+)$ and $(\widehat \tau_1^+,\widehat  H_1^+)$
are in the domain of attraction of a bivariate stable law of
indexes $(\rho, \ga \rho)$ (cf. \cite{cf:ChaDon} for a general version of this result),
in particular
\begin{equation} \label{eq:sumup1}
	\bbP(\tau_1^+ > n) \in R_{-\rho} \,, \qquad
	\bbP(H_1^+ > x) \in R_{-\ga\rho} \,, \qquad
	V^+(x) \in R_{\ga\rho} \,.
\end{equation}
An analogous statement holds
for the descending ladder variables
$(\tau_1^-, H_1^-)$ or $(\widehat \tau_1^-,\widehat  H_1^-)$: it suffices
to replace $\rho$ by $1-\rho$, so that
\begin{equation} \label{eq:sumup2}
	\bbP(\tau_1^- > n) \in R_{-(1-\rho)} \,, \qquad
	\bbP(H_1^- > x) \in R_{-\ga(1-\rho)}\,, \qquad
	V^-(x) \in R_{\ga(1-\rho)} \,.
\end{equation}
Furthermore, the following relation holds:
\begin{equation} \label{eq:usefuly0}
	\bbP(\widehat \tau_1^- > n) \sim (1-\zeta)^{-1} \bbP( \tau_1^- > n) \,,
\end{equation}
as we prove in Appendix~\ref{sec:319}.

We point out that,
by equation~(31) in~\cite{cf:VatWac}, as $n \to\infty$
\begin{equation} \label{eq:lien}
	\underline {\widehat V}^+(a_n)
	\sim {\mathtt C}^+ \, n \, \bbP(\tau_1^- > n) \,,
	\qquad
	\underline {\widehat V}^-(a_n)
	\sim {\mathtt C}^- \, n \, \bbP(\tau_1^+ > n) \,,
\end{equation}
and, by Theorem 1 of \cite{cf:DonSav}, as $\epsilon \downarrow 0$
\begin{equation} \label{eq:DonSav}
	g^+(\epsilon) \sim \tilde {\mathtt C}^+ \, g(0) \, \epsilon^{\alpha\rho} \,,
	\qquad
	g^-(\epsilon) \sim \tilde {\mathtt C}^- \, g(0) \, \epsilon^{\alpha(1-\rho)}\,.
\end{equation}
It turns out that the corresponding constants in the preceding relations coincide, i.e.:
\begin{equation} \label{eq:C+-}
	{\mathtt C}^+ = \tilde {\mathtt C}^+ \,, \qquad \quad
	{\mathtt C}^- = \tilde {\mathtt C}^- \,,
\end{equation}
as we prove in Lemma~\ref{th:CC} below.
We stress that the precise value of these constants is not universal.
In fact, if we change the norming sequence, taking
$a'_n := c a_n$ with $c > 0$, then $S_n / a'_n \Rightarrow X'_1 := X_1/c$
(recall Hypothesis~\ref{hyp:main}):
rewriting \eqref{eq:lien} and \eqref{eq:DonSav} for $a'_n$
and for the densities associated to $X'$, one sees that the constants
${\mathtt C}^\pm$ and $\tilde{\mathtt C}^\pm$
get divided by $c^{\alpha\rho}$.

\section{Local limit theorems in the lattice case}

\label{sec:llt}

In this section we put ourselves in the lattice case, cf. Hypothesis~\ref{hyp:main2},
so we assume that the random
walk law is supported by $\Z$ and is aperiodic.
Our purpose is to give the precise asymptotic behavior as $n\to\infty$
of the kernels $q_n^+(x,y)$ and $\widehat q_n^+(x,y)$,
defined in~\eqref{eq:q}.

When both $x/a_n$ and $y/a_n$ stay away from $0$ and $\infty$,
this is an easy consequence of
Gnedenko's local limit theorem~\cite{cf:GneKol}, which states that as $n\to\infty$
\begin{equation}\label{eq:Gnedenko}
	\bbP_x (S_n = y) = \frac{1}{a_n}
	\bigg\{ g\bigg( \frac{y-x}{a_n} \bigg) + o(1) \bigg\} \,, \qquad
	\text{uniformly for } x,y \ge 0 \,,
\end{equation}
and Liggett's invariance principle
for the bridges~\cite{cf:Lig}. More precisely, setting for $a,b > 0$
\begin{equation*}
	C(a,b) := \bP_a \bigg( \inf_{0 \le s \le 1} X_s \ge 0
	\,\bigg|\, X_1 = b \bigg) \,,
\end{equation*}
for every fixed $\epsilon > 0$ we have as $n\to\infty$
\begin{equation} \label{eq:eeasy}
	q_n^+(x,y) = \frac{1}{a_n}\, g \bigg( \frac{y-x}{a_n} \bigg) \,
	C \left( \frac{x}{a_n},\frac{y}{a_n} \right) (1+o(1)) \,,
	\quad \text{unif. for } x,y \in \left(\gep a_n , \frac{1}{\gep} a_n \right) \,,
\end{equation}
and exactly the same relation holds for $\widehat q_n^+(x,y)$.

Note that $C(a,b)>0$ for all $a,b>0$, but $C(a,b) \to 0$
if $\min\{a,b\} \to 0$, therefore when either $x = o(a_n)$
or $y = o(a_n)$ relation \eqref{eq:eeasy} only says that
$q_n^+(x,y) = o(1/a_n)$, and analogously $\widehat q_n^+(x,y) = o(1/a_n)$.
Precise estimates in this regime have been obtained in the last
years: for ease of reference, we sum them up explicitly
(and somewhat redundantly) in the following proposition.

%

\begin{proposition}\label{th:xq}
Assume that Hypothesis~\ref{hyp:main} and Hypothesis~\ref{hyp:main2} (lattice case)
hold. Then the following relations hold as $n\to\infty$, for $x,y \in \N_0$:
\begin{gather} \label{eq:xqlarge}
	\left. \begin{array}{r}
	\displaystyle
	q_n^+(x,y) = \frac{\bbP(\widehat\tau_1^- > n)}{a_n}
	\, \widehat V^-(x) \left( g^+ \left(\frac{y}{a_n}\right) + o(1) \right) \\
	\rule{0pt}{1.9em}\displaystyle
	\widehat q_n^+(x,y) = \frac{\bbP(\tau_1^- > n)}{a_n}
	\, \underline{V}^-(x) \left( g^+ \left(\frac{y}{a_n}\right) + o(1) \right)
	\end{array}	 \right\}
	\ \text{unif. for $x = o(a_n)$, $y \ge 0$} \,, \ \\
	\label{eq:xqlargeinv}
	\left. \begin{array}{r}
	\displaystyle
	q_n^+(x,y) = \frac{\bbP(\widehat\tau_1^+ > n)}{a_n}
	\, \widehat V^+(y) \left( g^- \left(\frac{x}{a_n}\right) + o(1) \right) \\
	\rule{0pt}{1.9em}\displaystyle
	\widehat q_n^+(x,y) = \frac{\bbP(\tau_1^+ > n)}{a_n}
	\, \underline{V}^+(y) \left( g^- \left(\frac{x}{a_n}\right) + o(1) \right)
	\end{array}	 \right\}
	\ \text{unif. for $y = o(a_n)$, $x \ge 0$} \,, \ \\
	\label{eq:xqsmall}
	\left. \begin{array}{r}
	\displaystyle
	q_n^+(x,y) = \frac{g(0)}{n \, a_n} \, \widehat V^-(x) \, V^+(y) \,
	\big( 1 + o(1) \big) \\
	\rule{0pt}{1.9em}\displaystyle
	\widehat q_n^+(x,y) = \frac{g(0)}{n \, a_n} \, \underline{V}^-(x) \,
	\underline{\widehat V}^+(y) \, \big( 1 + o(1) \big)
	\end{array} \right\}
	\ \text{uniformly for $x = o(a_n)$, $y = o(a_n)$}\,.
\end{gather}
\end{proposition}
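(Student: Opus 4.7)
The plan is to recognize Proposition~\ref{th:xq} as a compilation of local limit theorems from~\cite{cf:VatWac} and~\cite{cf:Don11}, translated into the notation of this paper. My task splits naturally into three pieces: (i) cite the strict-inequality version of each of the three regimes from those references, (ii) convert between the weak and strict formulations using the identities \eqref{eq:VhatV}, \eqref{eq:underVhatV}, and \eqref{eq:usefuly0} together with the harmonicity of the renewal functions established in Lemma~\ref{th:harm}, and (iii) deduce \eqref{eq:xqlargeinv} from \eqref{eq:xqlarge} by time-reversal of the random walk bridge.

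For the first two pieces, the Vatutin-Wachtel local limit theorems provide directly, for the strict kernel $\widehat q_n^+$, both the ``bulk'' asymptotic \eqref{eq:xqlarge} (uniformly for $x = o(a_n)$, $y \ge 0$) and the ``boundary'' asymptotic \eqref{eq:xqsmall} (uniformly for $x, y = o(a_n)$). Consistency of these two on the overlap $y \asymp \varepsilon a_n$, $\varepsilon \downarrow 0$, is verified through the small-$\varepsilon$ asymptotic $g^+(\varepsilon) \sim \tilde{\mathtt C}^+ g(0) \varepsilon^{\alpha\rho}$ of \eqref{eq:DonSav}, combined with \eqref{eq:lien} and the identification $\mathtt C^+ = \tilde{\mathtt C}^+$ of \eqref{eq:C+-}. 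The weak kernel $q_n^+$ is handled by an entirely parallel argument, in which the harmonic function $\underline V^-$ for the walk killed on entering $(-\infty, 0]$ is replaced by the harmonic function $V^-$ (or equivalently $\widehat V^-$ up to the factor $1-\zeta$) for the walk killed on entering $(-\infty, 0)$, and the other ladder functions are similarly matched via \eqref{eq:VhatV}, \eqref{eq:underVhatV}, and \eqref{eq:usefuly0}.

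The identity \eqref{eq:xqlargeinv} then follows from \eqref{eq:xqlarge} by time-reversal. Setting $T_k := S_N - S_{N-k}$, the process $(T_k)_{0 \le k \le N}$ has the same joint law as $(S_k)_{0 \le k \le N}$ since the increments of $S$ are i.i.d., and under $\{S_0 = x,\, S_N = y\}$ the constraint $S_j \ge 0$ for $1 \le j \le N-1$ becomes $T_k \le y$ for $1 \le k \le N-1$. The reflection $T'_k := y - T_k$ identifies this with a non-negative bridge of the walk with step distribution $-S_1$ going from $y$ to $x$, yielding $q_n^+(x,y) = q_n^{+,-S}(y,x)$, where the superscript $-S$ denotes the analogous kernel for the dualized walk; a parallel identity with strict inequalities gives $\widehat q_n^+(x,y) = \widehat q_n^{+,-S}(y,x)$. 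Since $-S$ satisfies Hypothesis~\ref{hyp:main} with positivity parameter $1-\rho$ in place of $\rho$, and one checks that $\bbP^{-S}(\widehat\tau_1^- > n) = \bbP(\widehat\tau_1^+ > n)$, $\widehat V^{-,-S}(\cdot) = \widehat V^+(\cdot)$, and $g^{+,-S}(\cdot) = g^-(\cdot)$ (with analogous identifications for the strict/modified variants), applying \eqref{eq:xqlarge} to $-S$ produces precisely \eqref{eq:xqlargeinv}.

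In my view, the main obstacle is the uniformity of the error terms in \eqref{eq:xqlarge} across the full range $y \ge 0$: matching the bulk regime $y \asymp a_n$ (governed by the meander density $g^+(y/a_n)$) with the boundary regime $y = o(a_n)$ (governed by $V^+(y)/n$) cannot be extracted from either Gnedenko's LLT or \eqref{eq:xqsmall} alone, and depends essentially on the refined fluctuation-theoretic estimates of~\cite{cf:VatWac,cf:Don11}, in particular the uniform control on the ratio $g^+(\varepsilon)/\varepsilon^{\alpha\rho}$ as $\varepsilon \downarrow 0$ together with the regular-variation properties of the renewal function $V^+$.
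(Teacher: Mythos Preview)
Your proposal is essentially correct and follows the paper's three-part strategy: cite the strict-kernel results from \cite{cf:VatWac,cf:Don11}, deduce the weak-kernel analogues, and obtain \eqref{eq:xqlargeinv} from \eqref{eq:xqlarge} by the duality $S \mapsto -S$ with $x \leftrightarrow y$.

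Two points where your write-up is less sharp than the paper's. First, for the passage from $\widehat q_n^+$ to $q_n^+$, the paper does not run a ``parallel argument'' with the weak harmonic function $V^-$; in the lattice case it uses the elementary shift identity $\widehat q_n^+(x,y) = q_n^+(x-1,y-1)$ for $x,y \ge 1$ (since $S_i > 0 \Leftrightarrow S_i \ge 1$ on the integers), together with $\underline V^-(x) = V^-(x-1)$, $\underline{\widehat V}^+(y) = \widehat V^+(y-1)$ and \eqref{eq:usefuly0}. This yields the first relations in \eqref{eq:xqlarge} and \eqref{eq:xqsmall} immediately from the second ones whenever $x,y \ge 1$; the remaining cases with $x=0$ are treated separately (the paper invokes an argument from~\cite{cf:AliDon} for $q_n^+(0,\cdot)$). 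Your ``parallel argument'' route would in effect re-derive the Vatutin--Wachtel machinery with weak inequalities, which is heavier than needed.

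Second, you should flag that the corner case $x=y=0$ of the second relation in \eqref{eq:xqsmall} is \emph{not} covered by \cite{cf:VatWac} and needs a short independent proof. The paper supplies one in an appendix, based on the Alili--Doney cyclic identity $\widehat q_n^+(0,0) = n^{-1}\,\bbP(H_1^- > 0,\, S_n = 0)$ and the asymptotic independence of $\{H_1^- > 0\}$ and $\{S_n = 0\}$; the case $x=y=0$ of the first relation then follows by writing $q_n^+(0,0)$ as a defective renewal mass function with step $K(n)=\widehat q_n^+(0,0)$.
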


As this list of relations may appear intimidating, let us give some directions.
A first notational simplification is that all the different renewal functions
appearing in these relations can be expressed in terms of $V^\pm(\cdot)$.
In fact $\widehat V^\pm (\cdot) = (1-\zeta) V^\pm(\cdot)$
by \eqref{eq:VhatV}, and furthermore
in the lattice case $\underline V^\pm(x) = V^\pm(x-1)$
and $\underline {\widehat V}^\pm(x) = (1-\zeta)V^\pm(x-1)$
for all $x \in \N$ ($x \ge 1$).

Another basic observation is that the relations in \eqref{eq:xqlargeinv} can be immediately
deduced from those in \eqref{eq:xqlarge}: it suffices to
consider the random walk $-S$ instead of $S$ and
to exchange $x$ with $y$ and every ``$+$'' quantity
with the corresponding ``$-$'' one. Therefore it is sufficient to
focus on the relations in \eqref{eq:xqlarge} and \eqref{eq:xqsmall},
but there are further simplifications.
In fact, observing that $\widehat q_n^+(x,y) = q_n^+(x-1,y-1)$
when $x,y \in \N$ (recall \eqref{eq:q}),
the first equation in \eqref{eq:xqsmall} follows from the second one if both $x,y \ge 1$.
Analogously, using relation \eqref{eq:usefuly0},
the first equation
in \eqref{eq:xqlarge} follows from the second one
when both $x,y \ge 1$.

Summarizing, for Proposition~\ref{th:xq} it is sufficient to prove:
\begin{itemize}
\item the four relations
in \eqref{eq:xqlarge} and \eqref{eq:xqsmall} in the special case $x = 0$;

\item the second relations in \eqref{eq:xqlarge} and \eqref{eq:xqsmall}
for general $x$.
\end{itemize}
The second relations in \eqref{eq:xqlarge}
and \eqref{eq:xqsmall} for $x=0$ were proved
in~\cite[Theorem~5 and Theorem~6]{cf:VatWac},
while the first ones can be deduced arguing as in
page~100 of~\cite{cf:AliDon}.\footnote{Previously,
these relations were proved in the gaussian case
($\ga = 2$, $\rho = \frac 12$), cf. \cite[Proposition~1]{cf:BryDon} and
\cite[Theorem~4]{cf:Car} for \eqref{eq:xqlarge} and
\cite[equation~(9)]{cf:AliDon} for \eqref{eq:xqsmall}.}
As a matter of fact,
the case $x=y=0$ of \eqref{eq:xqsmall} has
not been considered in \cite{cf:VatWac}, but it can be easily
deduced, as we show in Appendix~\ref{sec:45-2} and~\ref{sec:45-1}.
The second relations in \eqref{eq:xqlarge}
and \eqref{eq:xqsmall} for general $x$ have been recently proved
in~\cite[Proposition~11]{cf:Don11}, using a decomposition that allows
to express them as a function of the $x=0$ case.
This completes the justification of Proposition~\ref{th:xq}.



\begin{remark}\rm
Since the functions $g^\pm(z)$ vanish both when $z \downarrow 0$
and when $z \to +\infty$, the two relations in \eqref{eq:xqlarge} give
the \emph{precise} asymptotic behavior
(i.e., the ratio of the two sides of the equation converges to 1) only when $y/a_n$ is bounded away
from $0$ and $\infty$. When $y/a_n \to 0$, that is
$y = o(a_n)$, the precise asymptotic behavior
is given by \eqref{eq:xqsmall}.
When $y/a_n \to +\infty$ and $\alpha \rho < 1$ (which excludes the Brownian case),
the precise asymptotic behavior can be derived
under additional assumptions,
cf. Proposition~13 in~\cite{cf:Don11}.
\end{remark}

\begin{remark}\rm \label{rem:onlygauss?}
In the gaussian case ($\ga = 2$, $\rho = \frac12$)
several explicit expressions are available. For instance,
$g^{\pm}(x) = x \, e^{-x^{2}/2} \, \ind_{(0,\infty)}(x)$
and $g(x) = (2\pi)^{-1/2} e^{-x^{2}/2}$, hence the constants
in \eqref{eq:DonSav} are $\widetilde{\mathtt{C}}^\pm = \sqrt{2\pi}$.
From equation~(2.6) and the last equation on p.~515 in~\cite{cf:Car},
it follows that also for the constants in \eqref{eq:lien}
one has ${\mathtt C}^\pm = \sqrt{2\pi}$, in agreement
with \eqref{eq:C+-}.

Furthermore, since $V(\cdot) \in R_{1}$,
if $y/a_{n}$ is bounded away from $0$ and $\infty$ we can write
$\underline {\widehat V}^{+}(y) \sim \frac{y}{a_{n}} \underline {\widehat V}^{+}(a_{n})$.
It follows that the second relations in \eqref{eq:xqlarge} and \eqref{eq:xqsmall}
can be gathered in the following single one:
\begin{equation}\label{eq:hatq}
	\widehat q_n^+(0,y) \sim \frac 1n \, \frac{1}{a_n} \, g\left(\frac{y}{a_{n}}\right) \,
	\underline {\widehat V}^+(y) \sim \frac 1n \, \bbP(S_{n} = y) \,
	\underline {\widehat V}^+(y) \,,
\end{equation}
which holds uniformly in $y \in [0,Ma_{n}]$, for any fixed $M > 0$
(cf. equation~(1.7) in~\cite{cf:Car}).

It is natural to ask whether relation \eqref{eq:hatq}
still holds for $\alpha < 2$.
Recalling \eqref{eq:lien}, \eqref{eq:DonSav}
and \eqref{eq:C+-}, this is equivalent to asking whether
\eqref{eq:DonSav} can be strengthened to
\begin{equation} \label{eq:rell}
	g^{+}(x) = \widetilde{\mathtt C}^+ x^{\ga\rho} g(x) \ind_{(0,\infty)}(x) \,,
	\qquad \forall x \in \R\,.
\end{equation}
Arguing as in \cite{cf:AliCha}, it is not difficult to
show that this relation holds when the limiting L\'evy process has no positive
jumps, i.e. for $\alpha \in (1,2)$ and $\rho = 1 - 1/\alpha$.
We conjecture that relation \eqref{eq:rell} fails whenever
$\rho \ne 1-1/\alpha$. In the symmetric
Cauchy case $\alpha = 1$, $\rho = 1/2$ it has been shown
that indeed relation \eqref{eq:rell} does not hold
(cf. the comments following Proposition~1 in~\cite{cf:AliCha}).
\end{remark}

\smallskip

We conclude the section proving that the constants
${\mathtt C}^\pm$ and $\tilde {\mathtt C}^\pm$ indeed coincide.

\begin{lemma}\label{th:CC}
Recalling relations \eqref{eq:lien}
and \eqref{eq:DonSav}, we have ${\mathtt C}^+ = \tilde {\mathtt C}^+$
and ${\mathtt C}^- = \tilde {\mathtt C}^-$.
\end{lemma}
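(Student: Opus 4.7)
My plan is to compare the two asymptotic expressions for $\widehat q_n^+(0,y)$ provided by the second relations in \eqref{eq:xqlarge} and \eqref{eq:xqsmall} at a carefully chosen sequence $y=y_n$ with $y_n\to\infty$ and $y_n=o(a_n)$. I will only prove $\mathtt C^+=\tilde{\mathtt C}^+$; the identity $\mathtt C^-=\tilde{\mathtt C}^-$ then follows by applying the same argument to the reflected walk $-S$, which exchanges the roles of the $+$ and $-$ ladder variables and sends $\rho$ to $1-\rho$ (and thereby $\mathtt C^{+,(-S)}=\mathtt C^{-,(S)}$, likewise for $\tilde{\mathtt C}$).

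Setting $x=0$ in both equations (and using $\underline V^-(0)=1$), for such a sequence $y_n$ we get
\begin{equation*}
\frac{g(0)}{n\,a_n}\,\underline{\widehat V}^+(y_n)\,(1+o(1))
\;=\;\widehat q_n^+(0,y_n)
\;=\;\frac{\bbP(\tau_1^->n)}{a_n}\bigl(g^+(y_n/a_n)+r_n(y_n)\bigr),
\end{equation*}
where $\eta_n:=\sup_{y\ge 0}|r_n(y)|\to 0$. Substituting the L\'evy meander expansion $g^+(\epsilon)\sim\tilde{\mathtt C}^+g(0)\epsilon^{\alpha\rho}$ from \eqref{eq:DonSav} and the renewal asymptotic $n\,\bbP(\tau_1^->n)\sim\underline{\widehat V}^+(a_n)/\mathtt C^+$ from \eqref{eq:lien}, and assuming for the moment that the error $r_n(y_n)$ is dominated by $g^+(y_n/a_n)$, the displayed identity rearranges to
\begin{equation*}
\frac{\underline{\widehat V}^+(y_n)}{\underline{\widehat V}^+(a_n)}
\;\sim\;\frac{\tilde{\mathtt C}^+}{\mathtt C^+}\,\Bigl(\frac{y_n}{a_n}\Bigr)^{\alpha\rho}.
\end{equation*}
Since $\underline{\widehat V}^+\in R_{\alpha\rho}$ by \eqref{eq:sumup1} and \eqref{eq:underVhatV}, for a suitable choice of $y_n$ the left-hand side is also asymptotic to $(y_n/a_n)^{\alpha\rho}$, forcing $\tilde{\mathtt C}^+/\mathtt C^+=1$.

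The main technical hurdle is to exhibit a single sequence $y_n$ meeting all required constraints simultaneously: (i) $y_n\to\infty$; (ii) $y_n=o(a_n)$; (iii) $(y_n/a_n)^{\alpha\rho}\gg\eta_n$, so that $r_n(y_n)$ is negligible compared to $g^+(y_n/a_n)$; and (iv) writing $\underline{\widehat V}^+(x)=x^{\alpha\rho}L(x)$ with $L$ slowly varying, the ratio $L(y_n)/L(a_n)\to 1$. Condition (iii) can be enforced by letting $y_n/a_n\to 0$ sufficiently slowly, for instance by taking $y_n=\lfloor a_n\,\delta_n\rfloor$ with $\delta_n:=\max\{\eta_n^{1/(2\alpha\rho)},\,1/\log n\}$, which also ensures (i), (ii). Condition (iv) is secured by a diagonal argument based on the Uniform Convergence Theorem for slowly varying functions: given any $\varepsilon>0$, the relation $|L(\lambda a_n)/L(a_n)-1|<\varepsilon$ holds uniformly on $\lambda\in[A,1]$ for all sufficiently large $n$ (for each fixed $A>0$), so one may replace $\delta_n$, if necessary, with $\max\{\delta_n,\lambda_n^-\}$ for a suitably slowly decaying $\lambda_n^-\to 0$. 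These constraints being mutually compatible, the argument closes.
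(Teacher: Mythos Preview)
Your argument is correct. Both your approach and the paper's rest on the same underlying idea---matching the two asymptotic expressions \eqref{eq:xqlarge} and \eqref{eq:xqsmall} at an intermediate scale---but the packaging differs.

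The paper does not construct a specific sequence $y_n$ satisfying your constraints (i)--(iv). Instead it sets
\[
	h_n(z)\;:=\;\frac{n\,a_n}{\widehat V^-(0)\,V^+(\lfloor a_n z\rfloor)}\,q_n^+\big(0,\lfloor a_n z\rfloor\big)
\]
and shows, using \eqref{eq:xqlarge} for $z>0$ and \eqref{eq:xqsmall} for $z=0$, that for \emph{every} sequence $z_n\to z$ the limit $h(z):=\lim_n h_n(z_n)$ exists and equals $g^+(z)/(\mathtt C^+ z^{\alpha\rho})$ if $z>0$, and $g(0)$ if $z=0$. A short abstract lemma (Lemma~\ref{th:remarkable}) then says that any function arising this way is automatically continuous; applying continuity at $0$ together with \eqref{eq:DonSav} yields $\tilde{\mathtt C}^+ g(0)/\mathtt C^+=g(0)$.

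The gain of the paper's route is that each limit computation is done at a \emph{fixed} $z$: for $z>0$ one only needs regular variation on compact $\lambda$-sets (the easy form of the UCT), and for $z=0$ one uses \eqref{eq:xqsmall} directly, so your conditions (iii) and (iv) never have to be reconciled by hand. The diagonal argument you carry out explicitly is hidden inside the proof of Lemma~\ref{th:remarkable}, which is reusable. Conversely, your approach is entirely self-contained and does not require isolating a separate continuity lemma; the explicit construction of $\delta_n$ makes all the dependencies visible.
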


\noindent
For the proof we need the following very general result.

\begin{lemma}\label{th:remarkable}
Let $\{h_n(\cdot)\}_{n\in\N}$ be an arbitrary sequence of real functions,
all defined on the same subset $I \subseteq \R$. Assume that for
every $z \in I$ and for every sequence $\{z_n\}_{n\in\N}$ of $I$, such that $z_n \to z$,
the limit $h(z) := \lim_{n\to\infty} h_n(z_n)$
exists and does not depend on the sequence $\{z_n\}_{n\in\N}$,
but only on the limit point $z$. Then the function $h: I \to \R$ is continuous.
\end{lemma}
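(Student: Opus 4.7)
My plan is to use a diagonal extraction argument to reduce continuity of the limit function $h$ to an application of the hypothesis itself.

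First I would observe that $h$ is well-defined on $I$: applying the hypothesis with the constant sequence $z_n \equiv z$ gives $h(z) = \lim_{n\to\infty} h_n(z)$. So $h_n$ converges pointwise to $h$ on $I$. To prove continuity at a fixed point $z \in I$, I would take an arbitrary sequence $(w_k)_{k\in\N}$ in $I$ with $w_k \to z$, and show $h(w_k) \to h(z)$.

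The key step is the construction of a single auxiliary sequence $(z_n)_{n\in\N}$ in $I$ converging to $z$, to which the hypothesis can be applied. For each $k \in \N$, since $h_n(w_k) \to h(w_k)$, I can choose $n_k \in \N$ with $n_k > n_{k-1}$ (so that $n_k \to \infty$ strictly) such that
\begin{equation*}
    |h_{n_k}(w_k) - h(w_k)| < \frac{1}{k} \,.
\end{equation*}
Then I define $z_n := w_k$ if $n = n_k$ for some $k$, and $z_n := z$ otherwise. Since $w_k \to z$ and all other terms equal $z$, the sequence $z_n$ lies in $I$ and converges to $z$. By the hypothesis, $h_n(z_n) \to h(z)$, and restricting to the subsequence $n = n_k$ gives $h_{n_k}(w_k) \to h(z)$. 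Combining this with the inequality above yields $h(w_k) \to h(z)$, which is continuity at $z$.

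I expect no serious obstacle: the only subtlety is making sure the auxiliary sequence $(z_n)$ is genuinely a sequence in $I$ converging to $z$, which is why I slot in the base value $z \in I$ at all indices outside $\{n_k\}$. The argument is essentially a Moore--Osgood style diagonal, turning the \emph{a priori} joint limiting assumption into continuity of the limit.
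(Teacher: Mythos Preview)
Your proof is correct and uses essentially the same diagonal extraction idea as the paper. The only cosmetic differences are that the paper argues by contradiction (assuming $|h(\bar z)-h(\bar z^{(k)})|>2\epsilon$ and deriving a subsequence violating the hypothesis) while you argue directly, and the paper fills the intermediate indices by repeating the approximating points $\bar z^{(k)}$ whereas you fill them with the base point $z$ --- both choices yield a sequence in $I$ converging to $z$, so either works.
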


\begin{proof}
We proceed by contradiction. If $h$ is not continuous, there exist
$\epsilon > 0$, $\bar z \in I$
and a sequence $\bar z^{(k)} \to \bar z$ such that
$|h(\bar z) - h(\bar z^{(k)})| > 2\epsilon$ for every $k\in\N$.
For every fixed $k \in \N$ we have $\lim_{n\to\infty} h_n(\bar z^{(k)}) = h(\bar z^{(k)})$
by assumption, hence there exists $\tilde n(k) \in \N$ such that
$|h(\bar z^{(k)}) - h_{\tilde n(k)}(\bar z^{(k)})| < \epsilon$.
By the triangle inequality,
we then have
\begin{equation} \label{eq:toolest}
	|h_{\tilde n(k)}(\bar z^{(k)}) - h(\bar z)| \;\ge\;
	|h(\bar z) - h(\bar z^{(k)})| \,-\,
	|h(\bar z^{(k)}) - h_{\tilde n(k)}(\bar z^{(k)})| \;>\; \epsilon \,, \quad
	\forall k \in \N\,.
\end{equation}
Observe that $\tilde n(k)$ can be taken as large as we wish, hence
we may assume that $k \mapsto \tilde n(k)$ is increasing. We also set $\tilde n(0) := 0$
for convenience. Let us finally define the sequence $\{z_n\}_{n\in\N}$ by
$z_n := \bar z^{(k)}$, where $k \in \N$ is the only index such
that $\tilde n(k-1) < n \le \tilde n(k)$. By construction $\bar z^{(k)} \to \bar z$,
hence also $z_n \to \bar z$ and it follows by assumption that
$h_n(z_n) \to h(\bar z)$. However, this is impossible because the subsequence
$\{h_{\tilde n(k)}(z_{\tilde n(k)})\}_{k\in\N}$
does not converge to $h(\bar z)$,
as $|h_{\tilde n(k)}(z_{\tilde n(k)}) - h(\bar z)| = |h_{\tilde n(k)}(\bar z^{(k)}) - h(\bar z)|
> \epsilon$ for every $k\in\N$ by \eqref{eq:toolest}.
\end{proof}

\smallskip

\begin{proof}[Proof of Lemma~\ref{th:CC}]
Let us set for $n\in\N$ and $z \in [0,\infty)$
\begin{equation*}
	h_n(z) \,:=\, \frac{n \, a_n}{
	\widehat V^-(0) \, V^+(\lfloor a_n z \rfloor)} \, q_n^+(0, \lfloor a_n z \rfloor) \,.
\end{equation*}
Observe that, if $z > 0$ and $z_n \to z$, then as $n\to\infty$
\begin{equation*}
	V^+(\lfloor a_n z_n \rfloor) \,\sim\, z^{\alpha \rho} \, V^+(a_n) \,\sim\,
	z^{\alpha \rho} \, \frac{\underline{\widehat V}^+(a_n)}{(1-\zeta)} \,\sim\,
	z^{\alpha \rho} \, \frac{C^+ n \, \bbP(\tau_1^- > n)}{(1-\zeta)} \,\sim\,
	z^{\alpha \rho} \, C^+ n \, \bbP(\widehat\tau_1^- > n) \,,
\end{equation*}
having applied the third relation in \eqref{eq:sumup1},
\eqref{eq:VhatV} and the second relation in \eqref{eq:leftco},
the first relation in \eqref{eq:lien}, and \eqref{eq:usefuly0}.
It follows that the sequence of real functions $\{h_n(\cdot)\}_{n\in\N}$,
all defined on $I = [0,\infty)$, satisfies the assumptions of Lemma~\ref{th:remarkable}:
in fact, by the first relations in \eqref{eq:xqlarge} and \eqref{eq:xqsmall},
for every $z \in [0,\infty)$ and every sequence $z_n \to z$ we have that
\begin{equation*}
	\exists h(z) \,:=\, \lim_{n\to\infty} h_n(z_n) \,=\,
	\begin{cases}
	\displaystyle \frac{g^+(z)}{\mathtt{C}^+ \, z^{\alpha\rho}} & \text{ if } z > 0 \\
	\rule{0pt}{1.4em}g(0) & \text{ if } z = 0
	\end{cases} \,.
\end{equation*}
By Lemma~\ref{th:remarkable}, the function $h(\cdot)$ is continuous, hence
$\lim_{z \downarrow 0} h(z) = h(0)$.
Recalling \eqref{eq:DonSav}, it follows that
${\mathtt C}^+ = \tilde {\mathtt C}^+$. With almost identical arguments
one shows that ${\mathtt C}^- = \tilde {\mathtt C}^-$.
\end{proof}


\section{Local limit theorems in the absolutely continuous case}

\label{sec:lltabs}

In this section we focus on the absolutely continuous case,
cf. Hypothesis~\ref{hyp:main2}.
Since the law of $S_1$ has no atom, the distinctions between the different
renewal functions evaporate: $V^+ = \widehat V^+ = \widehat {\underline V}^+
= {\underline V}^+$, and analogously for $V^-$.
Therefore everything will be expressed as a function of $V^+$
and $V^-$.

Our goal is to derive local asymptotic relations for the kernel
$f_N^+(x,y)$, recall \eqref{eq:abscontden+}, that are closely
analogous to the relations stated in Proposition~\ref{th:xq} for the lattice case.
More precisely, we are going to prove the following important result:

\begin{theorem}\label{th:xqabscont}
Assume that Hypothesis~\ref{hyp:main} and Hypothesis~\ref{hyp:main2}
(absolutely continuous case) hold.
Then the following relations hold as $n\to\infty$, for $x,y \in [0,\infty)$:
\begin{gather}
	\label{eq:asi1loc}
	f_n^+(x,y) \,=\,
	\frac{\bbP(\tau_1^- > n)}{a_n} V^-(x)
	\left( g^+\left(\frac{y}{a_n}\right) + o(1) \right) , \ \ \,
	\text{unif. for $x = o(a_n)$, $y \in [0,\infty)$} \\
	\label{eq:asi0loc}
	f_n^+(x, y)
	\;=\; \frac{g(0)}{n a_n} V^-(x)
	V^+(y) 	\left( 1 + o(1) \right) \,,  \ \ \,
	\text{unif. for $x = o(a_n)$, $y = o(a_n)$} \,.
\end{gather}
%
\end{theorem}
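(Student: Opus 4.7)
The plan is to derive \eqref{eq:asi1loc}--\eqref{eq:asi0loc} from the Stone-type local limit theorems of~\cite{cf:VatWac,cf:Don11}, by a convolution smoothing argument that exploits Hypothesis~\ref{hyp:main2}. In the non-lattice setting these Stone statements take the form, for every fixed $h > 0$,
\begin{equation*}
    \bbP_x\bigl(\tau_1^- > n,\ S_n \in [y, y+h)\bigr) \,=\, h\,F_n(x,y)\,(1+o(1)),
\end{equation*}
where $F_n(x,y)$ is the right-hand side of \eqref{eq:asi1loc} when $y/a_n$ stays in a compact subset of $(0,\infty)$, and is the right-hand side of \eqref{eq:asi0loc} when $y = o(a_n)$, with $o(1)$ uniform in $x = o(a_n)$ in the appropriate regime.

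To pass from this mass-in-intervals estimate to the pointwise density asymptotics, I would fix $m \in \N$ such that $\|f_m\|_\infty < \infty$ (which exists by Hypothesis~\ref{hyp:main2}) and use the Markov property to write
\begin{equation*}
    f_n^+(x,y) \,=\, \int_0^\infty f_{n-m}^+(x,z)\,f_m^+(z,y)\,dz.
\end{equation*}
The kernel $z\mapsto f_m^+(z,y)$ is bounded pointwise by $\|f_m\|_\infty$ and, since $m$ is fixed, is essentially concentrated on a $z$-window $|y-z|\le L$ of bounded length. On this window, the density-factor $g^+(z/a_{n-m})$ in $F_{n-m}(x,z)$ is continuous and converges uniformly to $g^+(y/a_n)$; pulling it out of the integral and applying the Stone LLT to the mass of $f_{n-m}^+(x,\cdot)$ integrated against the mollifier $f_m^+(\cdot,y)$ would then give
\begin{equation*}
    f_n^+(x,y) \,\approx\, F_n(x,y)\int_0^\infty f_m^+(z,y)\,dz \,=\, F_n(x,y)\,\bbP_0\!\left(\max_{1\le k\le m} S_k \le y\right),
\end{equation*}
the last identity being a standard time-reversal computation using the duality lemma. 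Since $\bbP_0(\max\le y)\to 1$ as $y\to\infty$, this yields \eqref{eq:asi1loc} in the regime where $y/a_n$ does not vanish.

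The main obstacle is the small-$y$ regime, i.e.\ establishing \eqref{eq:asi0loc}, where the localizing factor $\bbP_0(\max_{k\le m} S_k\le y)$ is bounded away from $1$ and the argument above does not close. For this regime I would use the small-$y$ form of the Stone LLT, whose main factor is $\int_y^{y+h} V^+(z)\,dz$; the difficulty is that $V^+(\cdot)$ can vary by a non-trivial factor over a unit-length window about $y$ when $y$ is $O(1)$, so the mollification no longer acts trivially, and one must carefully track the interaction between $f_m^+(z,y)$ and $V^+(z)$. Here the direct Riemann integrability-type hypothesis on a convolution of $f$ (recalled in the text before the theorem) enters: it controls the $z$-tail of the convolution, obtained by iterating the Markov decomposition once more to introduce a bounded density in $z$, and allows the integral $\int f_m^+(z,y)\, V^+(z)\,dz$ to be resolved in terms of $V^+(y)$ times an explicit constant. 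Once \eqref{eq:asi0loc} is proved, the consistency with \eqref{eq:asi1loc} for $y/a_n\to 0$ follows from \eqref{eq:lien}--\eqref{eq:DonSav} together with the equality $\mathtt{C}^+ = \widetilde{\mathtt{C}}^+$ of Lemma~\ref{th:CC}, providing the full uniformity in $y \in [0,\infty)$ asserted in \eqref{eq:asi1loc}.
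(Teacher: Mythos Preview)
Your overall approach---fix $m$ with $f_m$ bounded, write $f_n^+(x,y)=\int f_{n-m}^+(x,z)\,f_m^+(z,y)\,\dd z$, and feed the Stone-type LLTs on intervals into this convolution---is exactly the paper's strategy. The paper makes this rigorous via an explicit upper/lower Riemann-sum sandwiching of the integral, together with a truncation lemma showing that the contribution of $|z-y|>M$ is negligible uniformly in the mesh; your sketch for the regime $y/a_n$ bounded away from $0$ is essentially the same, and your time-reversal identity $\int_0^\infty f_m^+(z,y)\,\dd z=\bbP_0(\max_{1\le k\le m}S_k\le y)$ is precisely what the paper uses.

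The gap is in the small-$y$ step. The integral $\int_0^\infty V^+(z)\,f_m^+(z,y)\,\dd z$ is not ``$V^+(y)$ times an explicit constant'' to be extracted from the dRi hypothesis: it is \emph{exactly} $V^+(y)$, by the harmonicity of $V^+$ for the semigroup of $-S$ killed on leaving $[0,\infty)$, i.e.\ relation \eqref{eq:Valt} rewritten via time-reversal (this is equation \eqref{eq:aimmi34} in the paper). That identity is what closes the argument and yields the correct prefactor in \eqref{eq:asi0loc}; without it you have no way to identify the constant. The direct Riemann integrability condition plays a different role: it is used only to show that the tails $|z-y|>M$ of the Riemann sums are uniformly small in $y$ and in the mesh $\Delta$ (the truncation lemma), not to evaluate the main term. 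You should also note that the regime $y/a_n\to\infty$ in \eqref{eq:asi1loc} requires a separate (easy) argument, since there $g^+(y/a_n)\to 0$ and one only needs the upper bound.
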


It is convenient to introduce the measure
\begin{equation*}
	F_n^+(x, \dd y) := \bbP_x(S_1 > 0, \ldots, S_n > 0, S_n \in \dd y ) \,,
\end{equation*}
so that, cf. \eqref{eq:abscontden+},
\begin{equation*}
	f_n^+(x, y) = \frac{F_n^+(x, \dd y)}{\dd y} \,.
\end{equation*}
Our starting point is the ``Stone version'' of the lattice
estimates in Proposition~\ref{th:xq}, proved by
Vatutin and Wachtel~\cite{cf:VatWac} and Doney~\cite{cf:Don11}, that
read as
\begin{enumerate}
\item for any fixed $\Delta > 0$,
uniformly for $x = o(a_n)$, $y \in [0,\infty)$ one has, as $n\to\infty$,
\begin{equation} \label{eq:asi1}
	F_n^+(x, [y, y+\Delta) )
	\;=\; \frac{\bbP(\tau_1^- > n)}{a_n}
	V^-(x)
	\left( g^+\left(\frac{y}{a_n}\right) \Delta + o(1) \right) \,,
\end{equation}

\item for any fixed $\Delta > 0$,
uniformly for $x = o(a_n)$, $y = o(a_n)$
one has, as $n\to\infty$,
\begin{equation} \label{eq:asi0}
	F_n^+(x, [y, y+\Delta) )
	\;=\; \frac{g(0)}{n a_n}
	V^-(x)
	\bigg( \int_{[y, y+\Delta)}
	V^+(z)
	\, \dd z \bigg)
	\left( 1 + o(1) \right) \,.
\end{equation}
\end{enumerate}
The idea of the proof of Theorem~\ref{th:xqabscont} is to derive
the asymptotic relations for $f_n^+(x,y)$ from the ``integrated'' relations
\eqref{eq:asi1}, \eqref{eq:asi0}, by letting $\Delta \downarrow 0$. The delicate
point is that interchanging the limits $\Delta \downarrow 0$
and $n\to\infty$ requires a careful justification.


\subsection{Strategy of the proof}

We choose ${\bar k}\in\N$ sufficiently large
but \emph{fixed}, as we specify below
(cf. \S\ref{sec:choicek}), and we write for all $n\ge {\bar k}$
\begin{equation} \label{eq:origine}
	f_n^+(x,y) \;=\; \int_{[0,\infty)} \dd z \,
	F_{n-{\bar k}}^+(x, \dd z) \, f_{\bar k}^+(z, y) \,.
\end{equation}
Next we approximate this integral by a Riemann sum over small
intervals. More precisely, we set for $z,y \ge 0$ and $\Delta > 0$
\begin{equation} \label{eq:cC+}
	\underline{f}_\Delta^+(z,y) := \inf_{u \in [z, z+\Delta)} f_{\bar k}^+(u,y) \,, \qquad
	\overline{f}_\Delta^+(z,y) := \sup_{u \in [z, z+\Delta)} f_{\bar k}^+(u,y) \,,
\end{equation}
so that for every $\Delta > 0$, $n\in\N$ and $x,y \ge 0$ we can write
\begin{equation} \label{eq:comparison}
	s_{n,\Delta}^+(x,y) \,\le\,
	f_n^+(x,y) \,\le\,
	S_{n,\Delta}^+(x,y) \,,
\end{equation}
where
\begin{gather}
	\label{eq:sRi}
	s_{n,\Delta}^+(x,y) \,:=\,
	\sum_{z \in \Delta\N_0}
	F^+_{n-{\bar k}}(x, [z,z+\Delta)) \, \underline{f}_\Delta^+(z,y) \,, \\
	S_{n,\Delta}^+(x,y) \,:=\,
	\label{eq:SRi}
	\sum_{z \in \Delta\N_0}
	F^+_{n-{\bar k}}(x, [z,z+\Delta)) \, \overline{f}_\Delta^+(z,y) \,.
\end{gather}
The idea is to replace $F_{n-{\bar k}}^+(x,[z, z+\Delta))$
by its asymptotic behavior,
given in \eqref{eq:asi1} and \eqref{eq:asi0},
and to show that $\underline{f}_\Delta^+(z,y) \simeq \overline{f}_\Delta^+(z,y)$ if $\Delta$ is small.
This is of course to be made precise. The delicate point is that we need
uniformity in $z$.

\subsection{The choice of ${\bar k}$}
\label{sec:choicek}

For the choice of ${\bar k}$ appearing in \eqref{eq:origine}
we impose two conditions.

The first condition on $\bar k$ is that $f_{{\bar k}-1}$ is a bounded function,
which we can do by Hypothesis~\ref{hyp:main2}. This is
enough to ensure that $f_{\bar k}^+(z,y)$ is uniformly continuous in $z$, uniformly
in $y$. By this we mean that for every $\epsilon > 0$ there exists $\Delta > 0$ such that
for all $z, z' \ge 0$ with $|z-z'|\le \Delta$ and for every $y \ge 0$
\begin{equation*}
	| f_{\bar k}^+(z',y) - f_{\bar k}^+(z,y) | \le \epsilon \,.
\end{equation*}
The proof is simple:
\begin{equation*}
\begin{split}
	| f_{\bar k}^+(z',y) & - f_{\bar k}^+(z,y) | = \bigg| \int_{w \in [0,\infty)} (f(w-z') - f(w - z))
	f_{{\bar k}-1}^+(w, y) \, \dd w \bigg| \\
	& \le \|f_{{\bar k}-1}\|_\infty \bigg| \int_{w \in [0,\infty)} (f(w-z') - f(w - z)) \, \dd w \bigg| \\
	& \le \|f_{{\bar k}-1}\|_\infty \bigg| \int_{w \in \R} (f(w + (z'-z)) - f(w)) \, \dd w \bigg|
	= \|f_{{\bar k}-1}\|_\infty \|\Theta_{(z'-z)}f - f\|_{L^1} \,,
\end{split}
\end{equation*}
where $(\Theta_h f)(x) := f(x - h)$ denotes the translation operator. Since this is continuous
in $L^1$, the claim follows.

Let us now set, for ${\bar k}\in\N$, $\alpha' > 0$, $\Delta > 0$ and $z \in \R$,
\begin{equation} \label{eq:Corig}
	\tilde C^{\alpha'}_\Delta(z) := \sup_{u \in [z, z+\Delta)} (1+|u|)^{\alpha'} \, f_{\bar k}(u) \,.
\end{equation}
The second condition on ${\bar k}$ is that (it is large enough so that)
for some $\alpha' \in (\rho\alpha,\alpha)$
and for some (hence any) $\Delta > 0$ the following upper Riemann sum is finite:
\begin{equation} \label{eq:dRihyp}
	\Theta(\Delta) \,:=\,
	\sum_{w \in \Delta\Z} \Delta \cdot \tilde C^{\alpha'}_\Delta(w) \,<\, \infty \,.
\end{equation}
In other words, we require that the function $(1+|w|)^{\alpha '} \, f_{\bar k}(w)$ is
\emph{directly Riemann integrable}, cf. \cite[\S XI.1]{cf:Fel2}.
We point out that this condition is always satisfied if ${\bar k}$ is large enough,
with no further assumptions beyond Hypothese~\ref{hyp:main} and~\ref{hyp:main2},
as it is proved in \cite{cf:Car2}.
Of course, an immediate sufficient condition, very common in concrete applications, is that
there exists $\alpha '' \in (\rho\alpha, \alpha)$ such that
$f_{\bar k}(x) \le (const.) / |x|^{1 + \alpha''}$.

A direct consequence of \eqref{eq:dRihyp} is that, for any fixed $\Delta > 0$,
the contribution to the sum of the terms with $|w| > M$ is small,
provided $M$ is large. The interesting point is that \emph{$M$ can be chosen
independently of (bounded) $\Delta$}: more precisely,
\begin{equation} \label{eq:dRicons}
	\forall \Delta_0 > 0, \ \forall \eta > 0 \quad \exists M > 0: \quad
	\sumtwo{w \in \Delta\Z}{|w| > M} \Delta \cdot
	\tilde C^{\alpha'}_\Delta(w) \,<\, \eta \,,
	\quad \forall \Delta \in (0,\Delta_0) \,.
\end{equation}
This follows by the \emph{monotonicity properties of upper Riemann
sums}. In fact, recalling the definition of $\Theta(\Delta)$ in \eqref{eq:dRihyp},
by construction one has $\Theta(\frac{1}{2}\Delta) \le \Theta(\Delta)$ for every $\Delta$
and $\Theta(\Delta) \le 2 \Theta(\Delta')$ for $\Delta \in (\frac{1}{2}\Delta', \Delta')$.
By iteration, it then suffices to prove \eqref{eq:dRicons} for fixed $\Delta$
(which, as we just remarked, follows immediately from
the finiteness of the sum in \eqref{eq:dRihyp}).

\subsection{Some preliminary results}

A useful observation is that the function $V^+$ is
increasing and sub-additive (as every renewal function), hence
\begin{equation} \label{eq:smarag}
	\frac{V^+(y+\delta)}{V^+(y)} \,\le\, 1 + \frac{V^+(\delta)}{V^+(y)}
	\,\le\, 1 + \frac{V^+(\delta)}{V^+(0)} \,=\, 1 + V^+(\delta) \,, \qquad
	\forall \delta, y \ge 0 \,.
\end{equation}

Let us first derive an upper bound from \eqref{eq:asi1} and \eqref{eq:asi0}.

\begin{lemma}
Fix any sequence $x_n = o(a_n)$ and $\Delta > 0$.
There exists a constant $C \in (0,\infty)$ such that
\begin{equation}\label{eq:ubb1}
	F_n^+(x_n,[y,y+\Delta)) \,\le\, \frac{C \, \Delta}{n a_n} \, V^-(x_n) \, V^+(y) \,,
	\quad \ \ \forall n\in\N, \ \forall y \ge 0 \,.
\end{equation}
\end{lemma}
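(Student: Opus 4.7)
The plan is to recast the statement as a uniform boundedness problem and then establish it by a subsequential contradiction argument. Introduce
\[
h_n(y) \,:=\, \frac{n\,a_n\,F_n^+(x_n,[y,y+\Delta))}{\Delta\,V^-(x_n)\,V^+(y)}\,, \qquad n\in\N,\ y\ge 0\,,
\]
so that \eqref{eq:ubb1} is equivalent to $\sup_{n,y} h_n(y)<\infty$. Since $F_n^+(x_n,[y,y+\Delta))\le 1$, and both $V^-(x_n)\ge V^-(0)>0$ and $V^+(y)\ge V^+(0)>0$, for every fixed $n$ the bound $\sup_y h_n(y)\le n a_n/(\Delta V^-(0)V^+(0))<\infty$ holds trivially. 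Consequently, if the claim fails there must exist $n_k\to\infty$ and $y_{n_k}\ge 0$ with $h_{n_k}(y_{n_k})\to\infty$; by compactness we may further pass to a subsequence along which $y_{n_k}/a_{n_k}\to c\in [0,+\infty]$, and the rest of the argument consists in deriving a contradiction in each of the three cases $c=0$, $c\in(0,\infty)$ and $c=+\infty$.

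If $c=0$ then $y_{n_k}=o(a_{n_k})$, so \eqref{eq:asi0} applies. Monotonicity of $V^+$ combined with the subadditivity estimate \eqref{eq:smarag} yields
\[
\int_{[y_{n_k},\,y_{n_k}+\Delta)}\!\! V^+(z)\,\dd z \,\le\, \Delta\,V^+(y_{n_k}+\Delta) \,\le\, \Delta\,V^+(y_{n_k})\,\bigl(1+V^+(\Delta)\bigr)\,,
\]
and substituting into \eqref{eq:asi0} gives $\limsup_k h_{n_k}(y_{n_k})\le g(0)(1+V^+(\Delta))$, a contradiction. If $c\in(0,\infty)$, I apply \eqref{eq:asi1} using that $g^+$ is bounded (as a density of a stable meander, vanishing at $0$ and $+\infty$), together with the equivalence $\bbP(\tau_1^->n)\sim V^+(a_n)/(\mathtt{C}^+ n)$ derived from \eqref{eq:lien} and the identity $\underline{\widehat V}^+=V^+$ valid in the absolutely continuous case; the regular variation of $V^+$ then gives $V^+(y_{n_k})\sim c^{\alpha\rho}V^+(a_{n_k})$, so that $h_{n_k}(y_{n_k})$ converges to a finite constant, again a contradiction. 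Finally, if $c=+\infty$, regular variation yields $V^+(a_{n_k})/V^+(y_{n_k})\to 0$, and combining this with \eqref{eq:asi1} and $\|g^+\|_\infty<\infty$ forces $h_{n_k}(y_{n_k})\to 0$, contradicting divergence.

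The main obstacle is the case $c=0$: the asymptotic \eqref{eq:asi0} naturally produces an integral of $V^+(z)$ over $[y_{n_k},y_{n_k}+\Delta)$ rather than the pointwise value $V^+(y_{n_k})$ that appears in the right hand side of \eqref{eq:ubb1}, and converting the former into a constant multiple of the latter uniformly in the position $y_{n_k}$ is precisely where the sub\-additivity inequality \eqref{eq:smarag} is essential. The remaining two cases are rather mechanical once the dictionary $\bbP(\tau_1^->n)\sim V^+(a_n)/(\mathtt{C}^+ n)$ from \eqref{eq:lien} has been made explicit.
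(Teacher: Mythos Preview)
Your proof is correct and follows essentially the same route as the paper: a contradiction argument, extraction of a subsequence with $y_{n_k}/a_{n_k}\to c\in[0,+\infty]$, and a case analysis using \eqref{eq:asi0} together with \eqref{eq:smarag} for $c=0$, and \eqref{eq:asi1} together with \eqref{eq:lien} and the boundedness of $g^+$ for $c\in(0,\infty]$. Your preliminary remark that $\sup_y h_n(y)<\infty$ for each fixed $n$ (hence $n_k\to\infty$) is a small point the paper leaves implicit, and in the $c=+\infty$ case the paper uses only monotonicity of $V^+$ rather than regular variation, but these are cosmetic differences.
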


\begin{proof}
We proceed by contradiciton. If \eqref{eq:ubb1} doesn't hold, there exist
$\Delta > 0$ and sequences
$C_n \to +\infty$, $y_n \ge 0$ such that
along a subsequence $n = n_k \to +\infty$ we have
\begin{equation} \label{eq:contrad}
	F_n^+(x_n,[y_n,y_n+\Delta)) \,>\, \frac{C_n \, \Delta}{n a_n} \, V^-(x_n)
	\, V^+(y_n) \,.
\end{equation}
For ease of notation, we implicitly assume that $n = n_k$ until the end of the proof.
Extracting a suitable subsequence, we may assume that
$y_{n} / a_{n} \to c \in [0,+\infty]$ and we show that in each case
$c=0$, $c \in (0,\infty)$ and $c = +\infty$ we obtain a contradiction.
\begin{itemize}
\item If $c=0$ then $y_n = o(a_n)$ and \eqref{eq:contrad} contradicts \eqref{eq:asi0},
because, by \eqref{eq:smarag},
\begin{equation*}
	\int_{[y_n, y_n+\Delta)} V^+(z) \, \dd z \,\le\, \Delta \, V^+(y_n+\Delta)
	\,\le\, (const.) \, V^+(y_n) \,.
\end{equation*}

\item If $0 < c < \infty$ then $y_n \sim  c \, a_n$ and
$V^+(y_n) \sim c^{\alpha\rho} V^+(a_n)$,
because $V^+ \in R_{\alpha\rho}$. By \eqref{eq:lien}
we know that $V^+(a_n) \sim (const.) \, n
\bbP(\tau_1^- > n)$, hence from \eqref{eq:contrad} we get
\begin{equation} \label{eq:contrad2}
	F_n^+(x_n,[y_n,y_n+\Delta)) \,>\, (const.') \frac{C_n \, \Delta}{a_n} \, V^-(x_n)
	\, \bbP(\tau_1^- > n) \,.
\end{equation}
This is in contradiction with \eqref{eq:asi1},
because $g^+$ is bounded.

\item The case $c=\infty$ is analogous and even simpler. In fact in this case
$V^+(y_n) \ge V^+(a_n)$ for large $n$ (recall that $V^+$ is increasing)
hence we still have \eqref{eq:contrad2}, which is again
in contradiction with \eqref{eq:asi1} (which holds also for $y_n \gg a_n$).
\end{itemize}
This completes the proof.
\end{proof}

Then we prove a crucial approximation result: we show that
the sum in \eqref{eq:SRi} can be truncated
to values of $z$ at a finite distance from $y$, losing a negligible contribution.

\begin{lemma} \label{th:trunc}
Fix any sequence $x_n = o(a_n)$ and $\Delta_0 > 0$.
For every $\eta > 0$ there exists $M \in (0,\infty)$ such that
for all $n\in\N$, $\Delta \in (0,\Delta_0)$ and $y\ge 0$
\begin{equation*}
	\sumtwo{z \in \Delta\N_0}{|z - y| > M}
	F^+_{n-{\bar k}}(x_n, [z,z+\Delta)) \, \overline{f}_\Delta^+(z,y) \,\le\,
	\eta \bigg( \frac{g(0)}{n a_n} V^-(x_n) V^+(y) \bigg) \,.
\end{equation*}
\end{lemma}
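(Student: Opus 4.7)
The plan is to combine the global upper bound \eqref{eq:ubb1} for $F^+_{n-\bar k}(x_n, \cdot)$ with the direct Riemann integrability hypothesis \eqref{eq:dRihyp}, in the uniform form \eqref{eq:dRicons}. Dropping the positivity constraint yields $\overline{f}^+_\Delta(z,y) \le \sup_{v \in (y-z-\Delta,\, y-z]} f_{\bar k}(v)$, and applying \eqref{eq:ubb1} (noting that $(n-\bar k)\,a_{n-\bar k}$ is comparable to $n\,a_n$, since $\bar k$ is fixed and $(a_n) \in R_{1/\alpha}$) reduces the claim to showing that
$$\sumtwo{z \in \Delta\N_0}{|z-y|>M} \Delta\, V^+(z)\, \sup_{v \in (y-z-\Delta,\, y-z]} f_{\bar k}(v) \;\le\; \eta'\, V^+(y),$$
for some $\eta'$ that is arbitrarily small once $M$ is large, uniformly in $\Delta \in (0,\Delta_0)$ and $y \ge 0$.

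To achieve this, I would invoke the subadditivity of the renewal function, $V^+(z) \le V^+(y) + V^+(|z-y|)$, which splits the sum into $V^+(y)\cdot \mathrm{(I)} + \mathrm{(II)}$, where $\mathrm{(I)}$ and $\mathrm{(II)}$ are $y$-independent tail Riemann sums of $f_{\bar k}$ and of $V^+(|\cdot|)\,f_{\bar k}$ respectively (after the change of variable $w = y - z - \Delta$). The core pointwise bound, immediate from the definition \eqref{eq:Corig}, is
$$\sup_{v \in (w-\Delta, w]} f_{\bar k}(v) \;\le\; \frac{\tilde{C}^{\alpha'}_\Delta(w-\Delta)}{(1+(|w|-\Delta)_+)^{\alpha'}}.$$
Since $(1+|v|)^{\alpha'}\ge 1$, this at once bounds $\mathrm{(I)}$ by $\sum_{|w|>M-\Delta_0} \Delta\, \tilde C^{\alpha'}_\Delta(w-\Delta)$, which is small by \eqref{eq:dRicons}. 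For $\mathrm{(II)}$, I would use the estimate $V^+(|w|) \le (\mathrm{const.})(1+|w|)^{\alpha\rho+\varepsilon}$, a standard consequence of $V^+ \in R_{\alpha\rho}$ and $V^+(0)=1$, choosing $\varepsilon>0$ so that $\alpha\rho+\varepsilon < \alpha'$; this is possible since $\alpha' > \alpha\rho$ by the choice of $\bar k$. The weight $(1+|w|)^{\alpha\rho+\varepsilon}/(1+(|w|-\Delta)_+)^{\alpha'}$ is then uniformly bounded for $|w|>M$ (with $M\ge 2\Delta_0$), so $\mathrm{(II)}$ is dominated, up to a constant, by the same tail sum, and hence also small. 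Since $V^+(y)\ge V^+(0)=1$, the total is bounded by $\eta\,V^+(y)$ with $\eta$ arbitrarily small, as required.

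The main technical obstacle is uniformity in both $y \ge 0$ and $\Delta \in (0,\Delta_0)$. Uniformity in $y$ is handled by the subadditivity trick, which factors $V^+(y)$ out cleanly from the residual tail sums that no longer depend on $y$. Uniformity in $\Delta$ is exactly the content of \eqref{eq:dRicons}, established via the monotonicity-of-upper-Riemann-sums argument sketched right after that display. A small additional point is that the change of variable $w = y - z - \Delta$ gives sums over the shifted lattice $\{y - (k+1)\Delta : k \in \N_0\}$ rather than $\Delta\Z$; but since $\tilde C^{\alpha'}_\Delta(w)$ is a sup over an interval of length $\Delta$, the tail bound on the shifted lattice is controlled by that on $\Delta\Z$ up to a harmless factor.
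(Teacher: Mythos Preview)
Your proposal is correct and follows essentially the same route as the paper: apply the global bound \eqref{eq:ubb1}, use subadditivity of $V^+$ to split off the factor $V^+(y)$, control the remaining tail sums via the direct Riemann integrability hypothesis \eqref{eq:dRihyp}--\eqref{eq:dRicons}, and absorb the second piece using $V^+(y)\ge 1$. The only cosmetic difference is that the paper first reduces to a single fixed $\Delta$ (invoking the monotonicity of upper Riemann sums at the outset), whereas you carry the uniformity in $\Delta$ through the argument and appeal to \eqref{eq:dRicons} at the end; both are equivalent.
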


\begin{proof}
A first basic observation is that it is enough to prove this relation \emph{for
a fixed $\Delta > 0$},
thanks to the monotonicity properties of upper Riemann sums
(cf. the lines following \eqref{eq:dRicons}).
For any $M>0$, by \eqref{eq:ubb1} we can write
\begin{equation*}
	\sumtwo{z \in \Delta\N_0}{|z - y| > M}
	F^+_{n-{\bar k}}(x_n, [z,z+\Delta)) \, \overline{f}_\Delta^+(z,y)
	\,\le\, \frac{C}{n a_n} \, V^-(x_n)
	\sumtwo{z \in \Delta\N_0}{|z - y| > M}  \Delta \cdot
	V^+(z) \, \overline{f}_\Delta^+(z,y) \,.
\end{equation*}
Since $V^+$ is sub-additive and increasing,
for $z > y$ we have $V^+(z) \le V^+(y) + V^+(z-y)$,
while for $z < y$ we have
$V^+(z) \le V^+(y)$.
Therefore we can bound
\begin{equation} \label{eq:treee}
\begin{split}
	& \sumtwo{z \in \Delta\N_0}{|z - y| > M}
	\Delta \cdot V^+(z) \, \overline{f}_\Delta^+(z,y) \\
	& \qquad \,\le\,
	V^+(y) \sumtwo{z \in \Delta\N_0}{z < y - M}
	\Delta \cdot \overline{f}_\Delta^+(z,y) \,+\,
	\sumtwo{z \in \Delta\N_0}{z > y + M}
	\big( V^+(y) + V^+(z-y) \big) \Delta \cdot \overline{f}_\Delta^+(z,y) \\
	& \qquad \,=\,
	V^+(y) \sumtwo{z \in \Delta\N_0}{|z-y| > M}
	\Delta \cdot \overline{f}_\Delta^+(z,y) \,+\,  \sumtwo{z \in \Delta\N_0}{z > y + M}
	V^+(z-y) \,\Delta \cdot \overline{f}_\Delta^+(z,y) \,.
\end{split}
\end{equation}
Let us bound the second sum.
Recalling \eqref{eq:cC+} and observe that
$f_{\bar k}^+(u,y) \le f_{\bar k}(y-u)$ for all $u,y \ge 0$; moreover, for
any fixed $\alpha' \in (\rho\alpha,\alpha)$ one has $V^+(w) \le |w|^{\alpha '}$
for large $w$, because $V^+ \in R_{\alpha\rho}$.
Recalling \eqref{eq:Corig},
it follows that $V^+(z-y) \, \overline{f}_\Delta^+(z,y)
\le (const.) \, \tilde C_\Delta^{\alpha '}(y-z)$.
Coming back to \eqref{eq:treee},
we use the bound $ \overline{f}_\Delta^+(z,y) \le \tilde
C_\Delta^{\alpha '}(y-z)$ in the first sum of the last line.
Since $1 = V^+(0) \le V^+(y)$,
it follows that for some constant $c > 0$
\begin{equation*}
\begin{split}
	& \sumtwo{z \in \Delta\N_0}{|z - y| > M} \Delta \cdot
	V^+(z) \, \overline{f}_\Delta^+(z,y)
	\,\le\, c \, V^+(y) \sumtwo{w \in \Delta\Z}{|w| > M}
	\Delta \cdot \tilde C^{\alpha'}_\Delta(w) \,.
\end{split}
\end{equation*}
We now apply \eqref{eq:dRicons} with $\eta$ replaced by $\eta g(0)/(cC)$,
getting
\begin{equation*}
	\sumtwo{z \in \Delta\N_0}{|z - y| > M}
	F^+_{n-{\bar k}}(x_n, [z,z+\Delta)) \, \overline{f}_\Delta^+(z,y)
	\,\le\, \eta \, \frac{g(0)}{n a_n} \, V^-(x_n)
	V^+(y+\Delta) \,,
\end{equation*}
which is precisely what we want to prove (recall \eqref{eq:SRi}).
\end{proof}

\subsection{Proof of \eqref{eq:asi0loc}}

We can finally prove \eqref{eq:asi0loc}. From the discussion of section \ref{sec:notation}, 
it suffices to show that, if we \emph{fix} any two sequences
$x_n = o(a_n)$ and $y_n = o(a_n)$, for every $\epsilon > 0$ we have, for large $n$,
\begin{equation}\label{eq:aimmi}
	(1-\epsilon) V^+(y_n) \,\le\, \frac{f_n^+(x_n, y_n)}
	{\frac{g(0)}{n a_n} V^-(x_n)} \,\le\, (1+\epsilon) V^+(y_n) \,.
\end{equation}

Let $\epsilon \in (0,1)$ be fixed. We start choosing $\Delta_0 := 1$
and we let $M_0$ denote the constant $M$ in Lemma~\ref{th:trunc}
corresponding to $\eta = \epsilon/4$. The reason for this will be clear later.

Observe that, by time-reversal, relation \eqref{eq:Valt}
for $N = \bar k$ can be rewritten as
\begin{equation} \label{eq:aimmi34}
	\int_{[0,\infty)} V^+(u) \, f_{\bar k}^+(u,y) \, \dd u  \,=\, V^+(y) \,, \qquad
	\forall y \ge 0\,.
\end{equation}
For notational convenience, let us set $V^+(u) := 0$ for $u < 0$, so that
the domain of integration can be extended to $\R$.
We claim that there exists ${\bar M} > 0$ such that
\begin{equation} \label{eq:aimmi4}
	\int_{|u - y| > {\bar M}-1} V^+(u) \, f_{\bar k}^+(u,y) \, \dd u  \,\le\,
	\frac{\epsilon}{4} V^+(y) \,, \qquad \forall y \ge 0 \,.
\end{equation}
In fact, observe that $f_{n}^+(x,y) \le f_{n}(y - x)$ and that
for every $y \ge 0$ and $s\in \R$
\begin{equation*}
	V^+(y-s) \,\le\, V^+(y) + V^+(|s|) \,\le\, 2 \, V^+(y)V^+(|s|) \,,
\end{equation*}
by sub-additivity of $V^+(\cdot)$
and the fact that $1 = V^+(0) \le V^+(x)$ for every $x\ge 0$, hence
\begin{equation*}
\begin{split}
	& \int_{|u - y| > M-1} V^+(u) \, f_{\bar k}^+(u,y) \, \dd u  \,\le\,
	\int_{|u - y| > M-1} V^+(u) \, f_{\bar k}(y-u) \, \dd u  \\
	& \qquad \qquad \,=\, \bbE (V^+(y-S_{\bar k}) \ind_{\{|S_{\bar k}| > M-1\}})
	\,\le\, 2 \, V^+(y) \, \bbE(V^+(|S_{\bar k}|) \, \ind_{\{|S_{\bar k}| > M-1\}}) \,.
\end{split}
\end{equation*}
We have $\bbP(|S_{\bar k}| > \cdot) \le \bar{k} \, \bbP(|S_{1}| > \cdot) \in R_{-\alpha}$
(recall Hypothesis~\ref{hyp:main}), hence
$\bbE(|S_{\bar k}|^{\alpha '}) < \infty$ for all $\alpha' < \alpha$. Since
$V^+(\cdot) \in R_{\alpha\rho}$ by \eqref{eq:sumup1} and since $\rho < 1$,
it follows that $\bbE(V^+(|S_{\bar k}|)) < \infty$.
We can then choose $\bar M>0$ large enough, with
$\bar M > M_0$ (that was fixed above), so that
$\bbE(V^+(|S_{\bar k}|) \, \ind_{\{|S_{\bar k}| > {\bar M}-1\}}) \le \epsilon/8$.
Relation \eqref{eq:aimmi4} is proved.

It follows immediately from
\eqref{eq:aimmi34}, \eqref{eq:aimmi4} that for every $0 < \Delta \le \Delta_0$
and $y\ge 0$ one has
\begin{equation}\label{eq:leylaan}
	\Bigg| \sumtwo{z \in \Delta\N_0}{|z - y| \le {\bar M}}
	\bigg( \int_{[z, z+\Delta)} V^+(u) \, f_{\bar k}^+(u,y) \, \dd u \bigg)
	\;-\; V^+(y) \Bigg|
	\,\le\, \frac{\epsilon}{4} \, V^+(y)  \,,
\end{equation}
just because the intervals $[z, z+\Delta)$, as $z$ varies in $\Delta\N_0$
with $|z - y| \le {\bar M}$, are disjoint and their union contains $[y-(\bar M-1), y+(\bar M-1))$.

Next observe that, for any $\Delta \le \Delta_0 = 1$
and $y \ge 0$, by \eqref{eq:smarag}
\begin{equation*}
\begin{split}
	& \sumtwo{z \in \Delta\N_0}{|z - y| \le {\bar M}}
	\int_{[z, z+\Delta)} V^+(u) \, \dd u \,\le\,
	\int_{y - {\bar M}}^{y + {\bar M} + \Delta} V^+(u) \, \dd u
	\,\le\, (2{\bar M}+\Delta) \, V^+(y + {\bar M} + \Delta) \\
	& \qquad
	\,\le\, (2{\bar M}+1) \, (1 + V^+({\bar M}+1)) \, V^+(y)
	\,=:\, {\bar C}\, V^+(y) \,,
\end{split}
\end{equation*}
where we stress that ${\bar C} > 0$ is a constant depending only on $\epsilon$
(through $\bar M$). Recall the definition \eqref{eq:cC+}.
Since we have chosen ${\bar k}$ large enough so that $f_{\bar k}^+(u,y)$ is uniformly
continuous in $u$, uniformly in $y$,
we can choose $0 < \bar\Delta < \Delta_0$ small enough so that
\begin{equation*}
	\forall z, y \ge 0, \ \forall u \in [z,z+{\bar\Delta}): \quad \
	\overline{f}_{\bar\Delta}^+(z,y) - \frac{\epsilon}{4 \bar C} \,\le\, f_{\bar k}^+(u,y) \,\le\,
	\underline{f}_{\bar\Delta}^+(z,y) + \frac{\epsilon}{4 \bar C} \,.
\end{equation*}
Inserting these estimates in \eqref{eq:leylaan}, it follows that, for every $y\ge 0$,
\begin{gather}
	\label{eq:aimmi2a}
	\sumtwo{z \in {\bar\Delta}\N_0}{|z - y| \le {\bar M}}
	\bigg( \int_{[z, z+{\bar\Delta})} V^+(u) \, \dd u \bigg)
	\, \overline{f}_{\bar\Delta}^+(z,y) \, \le \, \bigg( 1 + \frac{\epsilon}{2} \bigg) V^+(y) \\
	\label{eq:aimmi2b}
	\sumtwo{z \in {\bar\Delta}\N_0}{|z - y| \le {\bar M}}
	\bigg( \int_{[z, z+{\bar\Delta})} V^+(u) \, \dd u \bigg)
	\, \underline{f}_{\bar\Delta}^+(z,y) \, \ge \, \bigg( 1 - \frac{\epsilon}{2} \bigg) V^+(y) \,.
\end{gather}

Fix two sequences $x_n, y_n = o(a_n)$ and consider
the previous relations with $y = y_n$. Note that the sum over $z$ ranges over
a finite number of points, all at finite distance from $y_n$, hence each $z$ in the
sum is $o(a_n)$. Then it follows from \eqref{eq:asi0} that there exists $n_0 = n_0(\epsilon) < \infty$
such that for all $n\ge n_0$ and for all
$z \in {\bar\Delta}\N_0$ with $|z - y_n| \le {\bar M}$
\begin{equation*}
	\frac{1+\frac{\epsilon}{2}}{1+\frac{3\epsilon}{4}} \,
	\frac{F^+_{n-{\bar k}}(x_n, [z,z+{\bar\Delta}))}
	{\frac{g(0)}{n a_n} V^-(x_n)} \,\le\,
	\int_{[z, z+{\bar\Delta})} V^+(u) \, \dd u \,\le\,
	\frac{1-\frac{\epsilon}{2}}{1-\epsilon} \,
	\frac{F^+_{n-{\bar k}} (x_n, [z,z+{\bar\Delta}))}{\frac{g(0)}{n a_n} V^-(x_n)} \,.
\end{equation*}
Relations \eqref{eq:aimmi2a}, \eqref{eq:aimmi2b} then yield
\begin{gather}
	\label{eq:aimmi1a}
	\sumtwo{z \in {\bar\Delta}\N_0}{|z - y_n| \le {\bar M}}
	\frac{F^+_{n-{\bar k}}(x_n, [z,z+{\bar\Delta}))}
	{\frac{g(0)}{n a_n} V^-(x_n)} \, \overline{f}_{\bar\Delta}^+(z,y_n)
	\, \le \, \bigg(1+\frac{3\epsilon}{4}\bigg) V^+(y_n) \\
	\label{eq:aimmi1b}
	\sumtwo{z \in {\bar\Delta}\N_0}{|z - y_n| \le {\bar M}}
	\frac{F^+_{n-{\bar k}}(x_n, [z,z+{\bar\Delta}))}
	{\frac{g(0)}{n a_n} V^-(x_n)}
	\, \underline{f}_{\bar\Delta}^+(z,y_n) \, \ge \, (1-\epsilon) V^+(y_n) \,.
\end{gather}
Recall that our choice of $\bar M \ge M_0$ was such that Lemma~\ref{th:trunc} holds
for $\eta = \epsilon/4$. Therefore we can drop the restriction $|z - y_n| \le {\bar M}$
in \eqref{eq:aimmi1a}, provided we replace $\frac{3\epsilon}{4}$
by $\epsilon$ in the right hand side. Plainly, the restriction $|z - y_n| \le {\bar M}$
can be dropped from the sum in \eqref{eq:aimmi1b} with no further modification.
Looking back at \eqref{eq:comparison}--\eqref{eq:SRi}, it follows
that \eqref{eq:aimmi} holds true for $n\ge n_0$.
This completes the proof of relation~\eqref{eq:asi0loc}.

\subsection{Proof of \eqref{eq:asi1loc}}

The proof of \eqref{eq:asi1loc} is close in spirit to that of \eqref{eq:asi0loc} just given.
It suffices to show that, if we \emph{fix} any two sequences
$x_n = o(a_n)$ and $y_n \ge 0$, for every $\epsilon > 0$ we have, for large $n$,
\begin{equation}\label{eq:aimmi-bis}
	g^+ \bigg(\frac{y_n}{a_n} \bigg) \,-\, \epsilon \,\le\, \frac{f_n^+(x_n, y_n)}
	{\frac{1}{a_n} \bbP(\tau_1^- > n) V^-(x_n)} \,\le\,
	g^+ \bigg(\frac{y_n}{a_n} \bigg) \,+\, \epsilon \,.
\end{equation}
As we remarked in \S\ref{sec:notation}, it suffices to consider sequences
that have a (possibly infinite) limit, so we assume that $y_n / a_n \to \kappa \in [0,+\infty]$.
The case $\kappa=0$, i.e. $y_n = o(a_n)$, is a consequence of
relation \eqref{eq:asi0loc}, which is a stronger statement, so there is nothing to prove.
We then treat separately the cases $\kappa \in (0,\infty)$
and $\kappa = \infty$, starting from the former.

\smallskip

Let $\epsilon \in (0,1)$ be fixed. We choose $\Delta_0 := 1$
and we let $M_0$ denote the constant $M$ in Lemma~\ref{th:trunc}
corresponding to $\eta = \epsilon/(8 g(0) \mathtt{C}^+ \kappa^{\alpha \rho})$
(the reason for this choice will be clear later),
where we recall that $\mathtt{C}^+$ is the
constant appearing in~\eqref{eq:lien}.

Recall that $y_n \sim \kappa a_n$ with $\kappa \in (0,\infty)$
and note that $g^+(\kappa) > 0$. We claim that
we can choose $n_0 \in \N_0$ and $\bar M > 0$ such that
\begin{equation} \label{eq:aimmi4-bis}
	1 - \frac{\epsilon}{4 \, g^+(\kappa)} \,\le\,
	\int_{|u - y_n| \le {\bar M}-1} f_{\bar k}^+(u,y_n) \, \dd u  \,\le\,
	1 \,, \qquad \forall n \ge n_0 \,.
\end{equation}
In fact, if we denote by $S^* := -S$ the reflected walk,
if $y_n \ge M - 1$ we can write
\begin{equation*}
\begin{split}
	&\int_{|u - y_n| \le M-1} f_{\bar k}^+(u,y_n) \, \dd u  \\
	& \qquad \,=\,
	\bbP_{y_n} (S^*_1 \ge 0, \ldots, S^*_{\bar k - 1} \ge 0, S^*_{\bar k}
	\in [y_n - (M - 1), y_n + (M - 1)] ) \\
	& \qquad \,=\, \bbP_{0} (S^*_1 \ge -y_n, \ldots, S^*_{\bar k - 1} \ge -y_n, S^*_{\bar k}
	\in [- (M - 1), + (M - 1)] ) \,,
\end{split}
\end{equation*}
from which the upper bound in \eqref{eq:aimmi4-bis} follows trivially. For the lower bound,
note that
\begin{equation*}
\begin{split}
	&1 - \int_{|u - y_n| \le M-1} f_{\bar k}^+(u,y_n) \, \dd u  \\
	& \qquad \,\le\, \bbP (\{S^*_1 \ge -y_n, \ldots, S^*_{\bar k - 1} \ge -y_n\}^c)
	\,+\, \bbP(S^*_{\bar k} \not\in [- (M - 1), + (M - 1)] ) \,.
\end{split}
\end{equation*}
Since $\bar k$ is fixed and $y_n \to +\infty$,
the first term in the right hand side vanishes as $n\to\infty$, hence
we can choose $n_0$ such that for $n\ge n_0$ it is less than
$\epsilon/(8 \, g^+(\kappa))$. Analogously, we choose $\bar M$
large enough, with $\bar M > M_0$ (that was fixed above),
such that for $M \ge \bar M$ the second term in the right hand side
is less than $\epsilon/(8 \, g^+(\kappa))$.
Equation \eqref{eq:aimmi4-bis} is proved.

It follows immediately from
\eqref{eq:aimmi4-bis} that for every $0 < \Delta \le \Delta_0$
and $n \ge n_0$ one has
\begin{equation}\label{eq:leylaan-bis}
	1 - \frac{\epsilon}{4 \, g^+(\kappa)} \,\le\,
	\sumtwo{z \in \Delta\N_0}{|z - y_n| \le {\bar M}}
	\bigg( \int_{[z, z+\Delta)} \, f_{\bar k}^+(u,y_n) \, \dd u \bigg)
	\,\le\, 1 \,,
\end{equation}
just because the intervals $[z, z+\Delta)$, as $z$ varies in $\Delta\N_0$
with $|z - y_n| \le {\bar M}$, are disjoint and their union contains $[y_n-(\bar M-1), y_n+(\bar M-1))$.

Recall the definition \eqref{eq:cC+}.
Since we have chosen ${\bar k}$ large enough so that $f_{\bar k}^+(u,y)$ is uniformly
continuous in $u$, uniformly in $y$,
we can choose $0 < \bar\Delta < \Delta_0$ small enough so that for all $n \ge n_0$,
$z \ge 0$ and for every $u \in [z,z+{\bar\Delta})$
\begin{equation*}
	\overline{f}_{\bar\Delta}^+(z,y_n) - \frac{\epsilon}{4 \, (2\bar M+1) \, g^+(\kappa)}
	\,\le\, f_{\bar k}^+(u,y_n) \,\le\,
	\underline{f}_{\bar\Delta}^+(z,y_n) + \frac{\epsilon}{4 \, (2\bar M+1) \, g^+(\kappa)} \,.
\end{equation*}
Plugging these estimates into \eqref{eq:leylaan-bis} and observing that there are
at most $(2 \bar M + 1)/\bar\Delta$ values of $z \in \bar\Delta \N_0$ such that
$|z-y_n| \le \bar M$, we obtain
\begin{gather}
	\label{eq:aimmi2a-bis}
	\sumtwo{z \in {\bar\Delta}\N_0}{|z - y_n| \le {\bar M}}
	\Delta \, \overline{f}_{\bar\Delta}^+(z,y_n)
	\, \le \, 1 + \frac{\epsilon}{4 \, g^+(\kappa)} \,, \qquad
	\sumtwo{z \in {\bar\Delta}\N_0}{|z - y_n| \le {\bar M}}
	\Delta\,
	\, \underline{f}_{\bar\Delta}^+(z,y_n) \, \ge \,
	1 - \frac{\epsilon}{2 \, g^+(\kappa)} \,.
\end{gather}

Observe that the sum in both the preceding relations ranges over
a finite number of $z$, all at finite distance from $y_n$, hence each $z$ in the
sum is such that $z/a_n \to \kappa \in (0,\infty)$.
Therefore $g^+(z/a_n) \to g(\kappa)$ as $n\to\infty$, uniformly over $z$ in the sum range.
It follows that there exists $n_1 \ge n_0$ such that for all $n \ge n_1$
\begin{gather}
	\label{eq:aimmi2a-bis2}
	\sumtwo{z \in {\bar\Delta}\N_0}{|z - y_n| \le {\bar M}}
	g^+\bigg(\frac{z}{a_n}\bigg) \, \Delta \, \overline{f}_{\bar\Delta}^+(z,y_n)
	\, \le \, g^+(\kappa) + \frac{\epsilon}{2} \,,\\
	\label{eq:aimmi2b-bis2}
	\sumtwo{z \in {\bar\Delta}\N_0}{|z - y_n| \le {\bar M}}
	g^+\bigg(\frac{z}{a_n}\bigg) \, \Delta\,
	\, \underline{f}_{\bar\Delta}^+(z,y_n) \, \ge \,
	g^+(\kappa) - \frac{3\epsilon}{4} \,.
\end{gather}
Next observe that, as $n\to\infty$, we have $g^+(y_n/a_n) \to g^+(\kappa)$, and by \eqref{eq:asi1}
\begin{equation*}
	\bigg| g^+\bigg(\frac{z}{a_n}\bigg) \,\Delta \,-\, \frac{F^+_{n-{\bar k}}(x_n, [z,z+{\bar\Delta}))}
	{\frac{1}{a_n} \bbP(\tau_1^- > n) V^-(x_n)} \bigg| \longrightarrow 0 \,,
\end{equation*}
uniformly over $z$ in the sum range of \eqref{eq:aimmi2a-bis2} and \eqref{eq:aimmi2b-bis2}.
It follows that there exists $n_2 \ge n_1$ such that for all $n\ge n_2$
\begin{gather}
	\label{eq:aimmi2a-bis3}
	\sumtwo{z \in {\bar\Delta}\N_0}{|z - y_n| \le {\bar M}}
	\frac{F^+_{n-{\bar k}}(x_n, [z,z+{\bar\Delta}))}
	{\frac{1}{a_n} \bbP(\tau_1^- > n) V^-(x_n)} \, \overline{f}_{\bar\Delta}^+(z,y_n)
	\, \le \, g^+ \bigg(\frac{y_n}{a_n} \bigg) + \frac{3\epsilon}{4} \,,\\
	\label{eq:aimmi2b-bis3}
	\sumtwo{z \in {\bar\Delta}\N_0}{|z - y_n| \le {\bar M}}
	\frac{F^+_{n-{\bar k}}(x_n, [z,z+{\bar\Delta}))}
	{\frac{1}{a_n} \bbP(\tau_1^- > n) V^-(x_n)}
	\, \underline{f}_{\bar\Delta}^+(z,y_n) \, \ge \,
	g^+ \bigg(\frac{y_n}{a_n} \bigg) - \epsilon \,.
\end{gather}
Dropping the restriction $|z - y_n| \le {\bar M}$ in the sum in \eqref{eq:aimmi2b-bis3}
and recalling \eqref{eq:comparison}--\eqref{eq:SRi}, it follows
that the lower bound in \eqref{eq:aimmi-bis} holds true for $n\ge n_2$.

In order to drop the restriction $|z - y_n| \le {\bar M}$
in the sum in \eqref{eq:aimmi2a-bis3} as well, we need to control
the contribution of the terms with $|z - y_n| > {\bar M}$.
Recall that $\bar M$ was chosen greater than $M_0$, in such a way that
Lemma~\ref{th:trunc} holds with $\eta = \epsilon/(8 g(0) \mathtt{C}^+
\kappa^{\alpha\rho})$. Since $y_n \sim \kappa a_n$ and $V^+ \in R_{\alpha\rho}$
by \eqref{eq:sumup1}, it follows that $V^+(y_n) \sim \kappa^{\alpha\rho}
V^+(a_n) \sim \mathtt{C}^+ \kappa^{\alpha\rho} n \bbP(\tau_1^- > n)$,
having applied \eqref{eq:lien}. Therefore there exists $n_3 \ge n_2$ such
that for $n\ge n_3$ one has
$V^+(y_n) \le 2 \mathtt{C}^+ \kappa^{\alpha\rho} n \bbP(\tau_1^- > n)$,
hence by Lemma~\ref{th:trunc}
\begin{equation*}
	\sumtwo{z \in {\bar\Delta}\N_0}{|z - y_n| > {\bar M}}
	\frac{F^+_{n-{\bar k}}(x_n, [z,z+{\bar\Delta}))}
	{\frac{1}{a_n} \bbP(\tau_1^- > n) V^-(x_n)} \, \overline{f}_{\bar\Delta}^+(z,y_n)
	\,\le\, \frac{\epsilon}{4}\,.
\end{equation*}
This means that we can drop the restriction $|z - y_n| \le {\bar M}$ in
\eqref{eq:aimmi2a-bis3},
provided we replace $\frac{3\epsilon}{4}$ by $\epsilon$ in the right hand side.
Recalling \eqref{eq:comparison}--\eqref{eq:SRi}, we have proved that
the upper bound in \eqref{eq:aimmi-bis} holds true for $n\ge n_3$.

Finally, it remains to prove \eqref{eq:aimmi-bis} in the case when
$\kappa = \lim_{n\to\infty} y_n/a_n = +\infty$. Since $g^+(x) \to 0$
as $x \to +\infty$, it suffices to show that for every $\epsilon > 0$, for $n$ large,
\begin{equation} \label{eq:atlaast}
	\frac{f_n^+(x_n, y_n)}
	{\frac{1}{a_n} \bbP(\tau_1^- > n) V^-(x_n)} \,\le\, \epsilon \,,
\end{equation}
where we recall that $x_n = o(a_n)$ is a fixed sequence.
We fix an arbitrary $\Delta$
(say $\Delta = 1$) and note that,
by the upper bound in \eqref{eq:comparison}, we can write
\begin{equation} \label{eq:daidaidai}
	\frac{f_n^+(x_n, y_n)}
	{\frac{1}{a_n} \bbP(\tau_1^- > n) V^-(x_n)} \,\le\,
	\sum_{z \in \Delta\N_0} \frac{F^+_{n-{\bar k}}(x_n, [z,z+1))}
	{\frac{1}{a_n} \bbP(\tau_1^- > n) V^-(x_n)} \, \overline{f}_\Delta^+(z,y_n) \,.
\end{equation}
Since the function $g^+(\cdot)$ is bounded, by \eqref{eq:asi1}
\begin{equation*}
	\hat c \,:=\, \sup_{z \in [0,\infty),\, n > \bar{k}} \,
	\frac{F^+_{n-{\bar k}}(x_n, [z,z+\Delta))}
	{\frac{1}{a_n} \bbP(\tau_1^- > n) V^-(x_n) \Delta} \,<\, \infty \,.
\end{equation*}
Observing that $f_n^+(x,y) \le f_n(y-x)$ and recalling
\eqref{eq:cC+}, we can write
\begin{equation*}
\begin{split}
	& \sumtwo{z \in \Delta\N_0}{|z - y_n| > M}
	\frac{F^+_{n-{\bar k}}(x_n, [z,z+1))}
	{\frac{1}{a_n} \bbP(\tau_1^- > n) V^-(x_n)} \, \overline{f}_\Delta^+(z,y_n)
	\,\le\, \hat c
	\sumtwo{z \in {\Delta}\Z}{|z - y_n| > {M}}
	\Delta \, \sup_{u \in [y_n - z, y_n - z+\Delta)} f_{\bar k}(u) \\
	& \qquad \qquad \,=\, \hat c \sumtwo{w \in {\Delta}\Z + y_n}{|w| > { M}}
	\Delta \, \sup_{u \in [w, w + \Delta)} f_{\bar k}(u)
	\,\le\, 2 \hat c \sumtwo{w \in {\Delta}\Z}{|w| > { M}}
	\Delta \, \sup_{u \in [w, w + \Delta)} f_{\bar k}(u)  \,,
\end{split}
\end{equation*}
where the factor 2 in the last inequality is due to the lattice shift, from
${\Delta}\Z + y_n$ to ${\Delta}\Z$.
Recalling \eqref{eq:Corig} and \eqref{eq:dRihyp}, it follows that
the last sum is convergent, hence we can choose $M$ large enough
so that it is less than $\epsilon/2$.
Let us now focus on the contribution to \eqref{eq:daidaidai}
of the terms with $|z - y_n| \le M$. Note that there are
only a finite number of such terms.
Since each $z$ with $|z - y_n| \le M$ is such that $z/a_n \to +\infty$, it follows
by \eqref{eq:asi1} that
\begin{equation*}
	\lim_{n\to\infty} \bigg( \sup_{z\in \Delta\N_0,\, |z - y_n| \le M}
	\frac{F^+_{n-{\bar k}}(x_n, [z,z+1))}
	{\frac{1}{a_n} \bbP(\tau_1^- > n) V^-(x_n)} \bigg) = 0 \,.
\end{equation*}
By construction the function
$f_{\bar k}$ is bounded, hence there exists $n_4$ such that
for $n \ge n_4$
\begin{equation*}
	\sumtwo{z \in \Delta\N_0}{|z - y_n| \le M}
	\frac{F^+_{n-{\bar k}}(x_n, [z,z+1))}
	{\frac{1}{a_n} \bbP(\tau_1^- > n) V^-(x_n)} \, \overline{f}_\Delta^+(z,y_n)
	\,\le\, \frac{\epsilon}{2} \,.
\end{equation*}
Recalling \eqref{eq:daidaidai}, it follows that \eqref{eq:atlaast}
holds true for $n\ge n_4$, completing the proof.

\section{Proof of Theorem \ref{th:main}}
\label{sec:invpr}

This section is devoted to the proof of the invariance principle
in Theorem \ref{th:main}.
We recall that, by Hypothesis~\ref{hyp:main},
$(S = \{S_n\}_{n\ge 0}, \bbP)$ is a random walk in the domain of attraction
of a L\'evy process $(X = \{X_t\}_{t\ge 0}, \bP)$ with index $\ga \in (0,2]$ and positivity
parameter $\rho \in (0,1)$. We denote by $(a_n)_{n\in\N} \in R_{1/\ga}$ the norming
sequence, so that $S_n/a_n \Rightarrow X_1$.




\subsection{Random walks conditioned to stay positive}\label{srwn}

We remind that for convenience we assume that $\bbP$ is a law on the space
$\Omega^{RW} := \R^{\N_0}$, $S = \{S_n\}_{n\in\N}$ is the coordinate
process on this space and $\bbP_x$ the law on $\Omega_{RW}$ of
$S+x$ under $\bbP$, for all $x \in \R$ (but we only consider the case $x\in\Z$).
We also set $\Omega^{RW}_N := \R^{\{0, \ldots, N\}}$ for $N\in\N_0$.

Let us recall the definitions of
the bridges of length $N$ of the random walk $\bbP$
from $x$ to $y$ conditioned to stay non-negative or strictly positive,
cf. \eqref{eq:bbPN+0}, \eqref{eq:hatbbPN+0}:
these are the laws $\bbP_{x,y}^{\uparrow,N}$ and
$\widehat\bbP_{x,y}^{\uparrow,N}$ on $\Omega_N^{RW}$,
defined for $x,y \in \N_0$ and $N \in \N$ by
\begin{align} \label{eq:bbPN+}
	\bbP_{x,y}^{\uparrow,N}(\,\cdot\,) & :=
	\bbP_x( \,\cdot\, | S_1 \ge 0, \ldots, S_{N-1} \ge 0, S_N = y) \,, \\
	\label{eq:hatbbPN+}
	\widehat\bbP_{x,y}^{\uparrow,N}(\,\cdot\,) & :=
	\bbP_x( \,\cdot\, | S_1 > 0, \ldots, S_{N-1} > 0, S_N = y) \,.
\end{align}
Other basic laws on $\Omega_{RW}$ are $\bbP_x^\uparrow$ and $\widehat\bbP_x^\uparrow$,
the laws of the random walk $\bbP$ started at $x$
and conditioned to stay non-negative
or strictly positive for all time (cf. \cite{cf:BerDon,cf:CarCha}):
these are defined for $x \in \N_0$
by setting, for all $N\in\N$ and $B \in \gs(S_0, \ldots, S_N)$,
\begin{align} \label{eq:bbP+}
	\bbP_x^\uparrow(B) & := \frac{1}{V^-(x)} \bbE_x(\ind_B \, V^-(S_N) \,
	\ind_{\{S_1\ge 0, \ldots, S_N \ge 0\}}) \,, \\
	\label{eq:hatbbP+}
	\widehat\bbP_x^\uparrow(B) & := \frac{1}{\underline {V}^-(x)}
	\bbE_x(\ind_B \, \underline {V}^-(S_N) \,
	\ind_{\{S_1>0, \ldots, S_N>0\}}) \,,
\end{align}
where the renewal functions $V^-(\cdot)$ and $\underline {V}^-(\cdot)$
have been introduced in \S\ref{sec:fluctutation}. Lemma~\ref{th:harm} and the following lines
guarantee that the laws $\bbP_x^\uparrow$ and $\widehat\bbP_x^\uparrow$ are well-defined.
We have already observed that
$\bbP_{x,y}^{\uparrow,N}$ and $\widehat\bbP_{x,y}^{\uparrow,N}$ may be viewed
as bridges of the laws $\bbP^\uparrow_x$ and $\widehat\bbP^\uparrow_x$,
respectively: more precisely, we may write
\begin{equation} \label{eq:abscont}
	\bbP_{x,y}^{\uparrow,N}(\,\cdot\,) = \bbP_x^\uparrow(\,\cdot\,|S_N=y) \,,
	\qquad
	\widehat\bbP_{x,y}^{\uparrow,N}(\,\cdot\,) =
	\widehat\bbP_x^\uparrow(\,\cdot\,|S_N=y) \,.
\end{equation}

The following lemma is a direct consequence of the time reversal property of random walks. Let
$\tilde{\bbP}_{x,y}^{\uparrow,N}$ be the law of the bridge of
the reflected walk $\widetilde S := -S$ conditioned to stay non-negative, as it is defined
in (\ref{eq:bbPN+}) for $S$.
\begin{lemma}\label{eq:reversal1}
For all $x,y\ge0$, and for all $N\ge1$, under the law $\bbP_{x,y}^{\uparrow,N}$, the process
$(S_N-S_{N-M},\,0\le M\le N)$
has law $\tilde{\bbP}_{y,x}^{\uparrow,N}$.
\end{lemma}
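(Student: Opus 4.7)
The plan is to prove the claim by a direct verification of the joint distribution of the reversed process, using the exchangeability of the i.i.d.\ increments $\xi_k := S_k - S_{k-1}$ of $(S, \bbP)$ combined with the elementary duality between the transition kernels of $S$ and of the reflected walk $\widetilde S := -S$. This is a classical Markov-chain time-reversal computation.

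First, in the lattice case, for any admissible path $(\omega_0, \ldots, \omega_N)$ with $\omega_0 = x$, $\omega_N = y$ and $\omega_k \ge 0$ for $k = 1, \ldots, N-1$, I would write the unnormalized $\bbP_x$-weight of the cylinder as the product $\prod_{k=1}^N p(\omega_{k-1}, \omega_k)$, where $p(a,b) := \bbP(\xi_1 = b-a)$. The key algebraic observation is the trivial duality $p(a,b) = \tilde p(b,a)$, where $\tilde p(a,b) := \bbP(-\xi_1 = b-a) = p(b,a)$ is the transition kernel of $\widetilde S$. Setting $\omega'_M := \omega_{N-M}$ and re-indexing $M := N-k+1$, this gives
\[
\prod_{k=1}^N p(\omega_{k-1}, \omega_k) \;=\; \prod_{M=1}^N \tilde p(\omega'_{M-1}, \omega'_M),
\]
which is exactly the unnormalized $\widetilde \bbP_y$-weight of the reversed path $(\omega'_M)_{M=0}^N$, noting that $\omega'_0 = y$, $\omega'_N = x$, and $\omega'_M \ge 0$ for every $M$.

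Summing this pathwise identity over all admissible trajectories yields the equality of normalizers $\bbP_x(S_k \ge 0\ \forall k,\ S_N = y) = \widetilde \bbP_y(\widetilde S_k \ge 0\ \forall k,\ \widetilde S_N = x)$, and therefore the pushforward of $\bbP_{x,y}^{\uparrow,N}$ under the pathwise time-reversal map $(\omega_0,\ldots,\omega_N) \mapsto (\omega_N,\ldots,\omega_0)$ is exactly $\widetilde \bbP_{y,x}^{\uparrow,N}$. Translating this to the canonical process gives the stated identity in law (using the a.s.\ identity $S_N = y$ under the bridge, which converts $S_{N-M}$ into $y - (S_N - S_{N-M})$ and thereby reconciles the two natural formulations of the reversal). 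The absolutely continuous case proceeds identically, with the transition density $f(b-a) = \bbP(S_1 \in \dd b)/\dd b$ in place of $p(a,b)$ and Lebesgue measure in place of counting measure.

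No substantive obstacle is anticipated: the argument is textbook Markov-chain time-reversal, and the only point needing care is to verify that the conditioning event has positive probability (resp.\ positive density), which holds for all regimes of $(x,y)$ considered here, as explained around \eqref{eq:abscontden+} and justified by the local estimates of Section~\ref{sec:llt} and Section~\ref{sec:lltabs}.
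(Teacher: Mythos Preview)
Your argument is correct and is exactly the spelled-out version of what the paper intends: the paper offers no proof beyond the sentence ``a direct consequence of the time reversal property of random walks.'' Your pathwise duality $p(a,b)=\tilde p(b,a)$ together with the re-indexing $M=N-k$ is the standard way to make that sentence precise, and your parenthetical about using $S_N=y$ a.s.\ to pass between $(S_{N-M})_M$ and $(S_N-S_{N-M})_M$ correctly flags the only point where the literal statement needs unpacking.
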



\subsection{L\'evy processes conditioned to stay positive}\label{slpn}
\label{sec:levypos}

We recall that $\Omega := D([0,\infty),\R)$ is the space of real-valued
c\`adl\`ag paths which are defined on $[0,\infty)$, $X = \{X_t\}_{t\ge 0}$ is
the corresponding coordinate process and $\bP$ is the law
on $\Omega$ under which $X$ is the stable L\'evy process
appearing in Hypothesis~\ref{hyp:main}. We denote by $\bP_a$
the law on $\Omega$ of $X+a$ under $\bP$, for all $a\in\R$.
We also denote by $\Omega_t := D([0,t],\R)$ for $t \ge 0$ the space of paths of length~$t$.

In analogy to the discrete case, we can define the law of the L\'evy
process started at $a > 0$ and conditioned to stay positive for all time
\cite{cf:Cha97,cf:CarCha} to
be the law $\bP_a^\uparrow$ on $\Omega$ such that, for all $t > 0$
and $B \in \gs(X_{s}, 0 \le s \le t)$,
\begin{equation} \label{eq:bP+}
	\bP_a^\uparrow(B) := \frac{1}{U^-(a)} \bE_a
	(\ind_B \, U^-(X_t) \, \ind_{\{X_s \ge 0, \, \forall 0 \le s \le t\}} ) \,,
\end{equation}
where $U^-(\cdot)$ is the renewal function associated to the descending
ladder height process of $(X,\bP)$. Since $X$ is stable
we have $U^-(x) = x^{\ga(1-\rho)}$.
Note that for L\'evy processes
there is no distinction between staying non-negative
and strictly positive.
Although \eqref{eq:bP+} does not make sense when $a=0$,
because $U^-(0)=0$, the law $\bP_0^\uparrow$ can still be defined, see
\cite{cf:Cha97}, and we have
\begin{equation} \label{eq:bP0}
\bP_a^\uparrow \Rightarrow\bP_0^\uparrow\,,\;\;\;\mbox{as $a \downarrow 0$,}
\end{equation}
where $\Rightarrow$ denotes weak convergence on $\Omega$.

Then we will define the law $\bP^{\uparrow,t}_{a,b}$,
on $\Omega_t$ of the bridge of the L\'evy process $(X,\bP)$, with length $t>0$, between $a\ge0$ and $b\ge0$,
conditioned to stay positive. The following definitions and results are mainly excerpt from \cite{cf:Ur11}
to which we refer for details.
Set $\tilde{X}:=-X$ and denote by $\tilde{U}^-(x)=x^{\alpha\rho}$ the renewal function associated to the
descending ladder heights process of $(\tilde{X},\bP)$. Define the measure
\begin{equation}\label{eq:lambdaup}
	\lambda^\uparrow({\dd z})={U}^-(z)\tilde{U}^-(z)\,\dd z=z^\alpha\,\dd z\,,
\end{equation}
on $[0,\infty)$ and let $g_t^\uparrow(a,b)$ be the unique version of the semigroup density of $\bP^\uparrow$
with respect to the measure $\lambda^\uparrow({\dd z})$, i.e.
\[g_t^\uparrow(a,b)\,\lambda^\uparrow({\dd b}) := \bP_a^\uparrow(X_t \in \dd b)\,,\]
which satisfies the Chapman-Kolmogorov equation:
\begin{equation}\label{eq:chapman-kolmogorov}
g_{s+t}^\uparrow(a,b)=\int_0^\infty g_s^\uparrow(a,z)g_t^\uparrow(z,b)\,\lambda^\uparrow(\dd z)\,,\;\;\;
\mbox{for all $s,t>0$ and $a\ge0$, $b\ge0$.}
\end{equation}
By (\ref{eq:bP+}), for $a,b>0$, this density may be written as
\begin{equation}\label{eq:g+}
	g_t^\uparrow(a,b) := \frac{1}{U^-(a)\tilde{U}^-(b)}g_t^+(a,b) \,,
\end{equation}
where $g_t^+(a,b)$ is the semigroup of the L\'evy process $(X,\bP)$ killed at its first passage time
below 0, i.e. for $a,b>0$,
\begin{equation*}
	g_t^+(a,b)\,{\dd b} := \bP_a(X_t \in \dd b,\, X_s \ge 0, \, \forall 0 \le s \le t)  \,.
\end{equation*}
By Lemma 3 in \cite{cf:Ur11}, for each $t>0$, the densities $g_t^\uparrow(a,b)$ are strictly positive and continuous on $[0,\infty)\times[0,\infty)$ (including
$a=0$ and $b=0$).
Then for all $a\ge0$ and $b\ge0$, the bridge of the L\'evy process conditioned to stay positive is formally defined as
follows: for $\gep > 0$ and $B \in \gs(X_s, 0 \le s \le t-\gep)$,
\begin{equation}\label{eq:bPab+}
	\bP^{\uparrow,t}_{a,b}(B) := \frac{1}{g_t^\uparrow(a,b)}
	\bE_a^\uparrow(\ind_B \, g_{\gep}^\uparrow(X_{t-\gep},b)) \,,
\end{equation}
and we may check, thanks to the Chapman-Kolmogorov equation (\ref{eq:chapman-kolmogorov}), that this relation indeed defines a
regular version of the conditional law $\bP^{\uparrow}_{a}(\,\cdot\,|\,X_t=b)$ on $\Omega_t$. Moreover, from \cite{cf:Ur11},
the measures $\bP^{\uparrow,t}_{a,b}$ are weakly continuous in $a$ and $b$ on $\Omega_t$.

Let us denote by $\tilde{\bP}^{\uparrow,t}_{a,b}$ the law of the bridge of $(\tilde{X},\bP)=(-X,\bP)$ conditioned to stay positive. Then
we derive, from the duality property which is proved in Lemma 1
of \cite{cf:Ur11} and from Corollary 1 in \cite{cf:FitPitYor}, the following
time reversal property, which is the continuous time counterpart of Lemma \ref{eq:reversal1}.
\begin{lemma}\label{eq:reversal2}
With the convention $0-=0$, for all $a,b\ge0$, and for all $t>0$, under the law ${\bP}^{\uparrow,t}_{a,b}$, the process
$(X_t-X_{(t-s)-},\,0\le s\le t)$
has law $\tilde{\bP}^{\uparrow,t}_{b,a}$.
\end{lemma}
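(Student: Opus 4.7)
The plan is to derive the identity from two classical ingredients: the duality between the killed L\'evy semigroups of $X$ and $\tilde X = -X$, which is Lemma~1 of~\cite{cf:Ur11} and reads $g_t^+(a,b) = \tilde g_t^+(b,a)$ for $a,b > 0$, and a general time reversal theorem for Markov bridges, Corollary~1 of~\cite{cf:FitPitYor}, according to which the time reversal of the bridge of a Markov process is the bridge of its dual, with respect to a suitable reference measure.

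First, I would upgrade the killed-semigroup duality to a duality for the conditioned semigroup $g_t^\uparrow$. Using the expression~\eqref{eq:g+} for $g_t^\uparrow$ and its analogue for $\tilde g_t^\uparrow$, one gets
\[
	g_t^\uparrow(a,b) \;=\; \frac{g_t^+(a,b)}{U^-(a)\,\tilde U^-(b)}
	\;=\; \frac{\tilde g_t^+(b,a)}{\tilde U^-(b)\,U^-(a)} \;=\; \tilde g_t^\uparrow(b,a),
\]
initially for $a,b > 0$ and then for all $a,b \ge 0$ by joint continuity (Lemma~3 of~\cite{cf:Ur11}). With respect to the reference measure $\lambda^\uparrow(\dd z) = z^\alpha\, \dd z$ introduced in~\eqref{eq:lambdaup}, this identity exhibits the Markov processes $(X,\bP^\uparrow)$ and $(\tilde X,\tilde\bP^\uparrow)$ as mutually dual, so that Corollary~1 of~\cite{cf:FitPitYor} applies and yields a time reversal identity for $\bP^{\uparrow,t}_{a,b}$ expressed in terms of $\tilde\bP^{\uparrow,t}_{b,a}$.

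The last step is to translate this abstract time reversal into the concrete pathwise expression $(X_t - X_{(t-s)-})_{0\le s\le t}$ of the lemma. This amounts to unfolding the conclusion of Corollary~1 in~\cite{cf:FitPitYor}, which is phrased on a general state space, at the level of the coordinate process on $\Omega_t$, and to tracking how the reflection $\tilde X = -X$ entering the duality of Step~1 interacts with the time reversal and the endpoint conventions. The convention $0- := 0$ is precisely what is needed to give the reversed path a well-defined value at the single endpoint $s = t$, compatibly with the almost sure absence of a fixed-time jump of $X$ at $t$ under the bridge law. This reconciliation is the main obstacle: once the correspondence between the abstract and the pathwise statements is pinned down, equality of laws on $\Omega_t$ reduces to equality of finite dimensional distributions, which can be written explicitly in terms of $g_t^\uparrow$ through the Markov property.
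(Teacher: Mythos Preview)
Your proposal is correct and follows precisely the approach indicated in the paper: the paper does not give a detailed proof of this lemma but simply states that it is derived from the duality property proved in Lemma~1 of~\cite{cf:Ur11} together with Corollary~1 of~\cite{cf:FitPitYor}, which is exactly the two-ingredient argument you spell out. Your proposal is in fact a fleshed-out version of the paper's one-line justification, including the explicit verification that $g_t^\uparrow(a,b) = \tilde g_t^\uparrow(b,a)$ with respect to $\lambda^\uparrow$, which is the duality needed to invoke~\cite{cf:FitPitYor}.
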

\noindent Now let us focus on the special case where $a=b=0$. In \cite{cf:Cha97}, Lemma 2, the law $\bP^{\uparrow,t}_{0,0}$ is
interpreted as the weak limit
\[\bP^{\uparrow,t}_{0,0}(\cdot)=\lim_{\varepsilon\downarrow0}\bP^{\uparrow}_{0}(\,\cdot\,|\,0\le X_t\le\varepsilon)\,,\]
and, when the L\'evy process $(X,\bP)$ has no negative jumps,
this law is identified to the law of the normalized excursion
of the reflected process at its past infimum. In particular,
when $X$ is the standard
Brownian motion, it corresponds to the normalized Brownian excursion.

Then for $0 < \epsilon < t$ and $x \ge 0$ we set
\begin{equation} \label{eq:fgept}
	f_{\gep,t}(x) := \frac{g_{\gep}^\uparrow(x,0)}{g_t^\uparrow(0,0)} \,,
\end{equation}
so that, by (\ref{eq:bPab+}), $f_{\gep,t}(X_{t-\epsilon})$ is
the Radon-Nikodym density of
$\bP^{\uparrow,t}_{0,0}$ with respect to $\bP^{\uparrow}_{0}$ on
the sigma field $\gs(X_s, 0 \le s \le t-\gep)$, i.e. for $B \in \gs(X_s, 0 \le s \le t-\gep)$,
\begin{equation}\label{eq:bP00+}
	\bP^{\uparrow,t}_{0,0}(B) :=
	\bE_0^\uparrow(\ind_B \, f_{\varepsilon,t}(X_{t-\gep},0)) \,.
\end{equation}
We recall that $g^-(\cdot)$ is the density of the law of the terminal value of
the meander with length $1$ of $-X$.
Recall also from (\ref{eq:DonSav}) and (\ref{eq:C+-}) that
$g^-(x)\sim{\mathtt C}^-g(0)x^{\alpha(1-\rho)}$, as $x\downarrow0$.
Then we have the following result, proved in Appendix~\ref{ap:B}.

\begin{lemma}\label{th:density} The function $f_{\gep,t}(\cdot)$ is continuous on $[0,\infty)$
and is given by:
\begin{align}
	f_{\gep,t}(x) & \,=\, \frac{(t/\epsilon)^{1+1/\alpha}}{\mathtt{C}^-\, g(0)}
	\, \frac{g^-(\epsilon^{-1/\alpha}x)}{(\epsilon^{-1/\alpha}x)^{\alpha(1-\rho)}}
	\,, \quad \text{for } x> 0 \,,\label{eq:density}\\
	f_{\gep, t}(0) & \,=\, \left(\frac{t}{\gep}\right)^{1+\frac1\alpha}\,.\label{eq:asfgep}
\end{align}
\end{lemma}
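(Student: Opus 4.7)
The plan is to combine three ingredients: self-similarity of $X$, time-reversal duality, and the asymptotic behavior of the meander density near~$0$.

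\emph{Step 1 (Reduction via scaling).} The stable scaling $\{X_{\epsilon t}\}_{t\ge 0} \stackrel{d}{=} \{\epsilon^{1/\alpha} X_t\}_{t\ge 0}$ applied to the killed semigroup yields
\begin{equation*}
g_\epsilon^+(x,b) = \epsilon^{-1/\alpha}\, g_1^+(\epsilon^{-1/\alpha} x,\, \epsilon^{-1/\alpha} b), \qquad x,b>0.
\end{equation*}
Combined with $U^-(a) = a^{\alpha(1-\rho)}$, $\tilde U^-(b) = b^{\alpha\rho}$ and \eqref{eq:g+}, this gives $g_\epsilon^\uparrow(x,b) = \epsilon^{-1-1/\alpha}\, g_1^\uparrow(\epsilon^{-1/\alpha}x,\, \epsilon^{-1/\alpha}b)$. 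By the joint continuity of $g_t^\uparrow$ on $[0,\infty)^2$ (Lemma~3 in~\cite{cf:Ur11}), the identity extends to the boundary $b=0$, so that
\begin{equation*}
f_{\epsilon,t}(x) \;=\; \left(\frac{t}{\epsilon}\right)^{1+1/\alpha}\, \frac{g_1^\uparrow(\epsilon^{-1/\alpha}x,\,0)}{g_1^\uparrow(0,0)}.
\end{equation*}
The whole problem is thus reduced to computing $g_1^\uparrow(y,0)$ for $y\ge 0$.

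\emph{Step 2 (Duality and meander identification).} Time reversal for L\'evy processes gives the identity $g_1^+(y,b) = \tilde g_1^+(b,y)$ for $y,b>0$, where tildes refer to $\tilde X := -X$. Applied in \eqref{eq:g+} this yields, for $y>0$,
\begin{equation*}
g_1^\uparrow(y,0) \;=\; \frac{1}{y^{\alpha(1-\rho)}}\, \lim_{b\downarrow 0} \frac{\tilde g_1^+(b,y)}{b^{\alpha\rho}}.
\end{equation*}
By the very construction of the meander of $\tilde X$ (Lemma~4 of \cite{cf:ChaDon}; see also \cite{cf:Cha97}),
\begin{equation*}
\frac{\tilde g_1^+(b,y)}{\bP_b\bigl(\tilde X_s \ge 0,\, \forall s\in[0,1]\bigr)} \;\xrightarrow[b\downarrow 0]{}\; g^-(y),
\end{equation*}
and a standard fluctuation-theoretic argument shows that $\bP_b(\tilde X_s \ge 0,\, \forall s\in[0,1]) \sim k\, \tilde U^-(b) = k\, b^{\alpha\rho}$ as $b\downarrow 0$, for some constant $k>0$. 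Putting the two limits together,
\begin{equation*}
g_1^\uparrow(y,0) \;=\; \frac{k\, g^-(y)}{y^{\alpha(1-\rho)}}, \qquad y>0.
\end{equation*}

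\emph{Step 3 (Conclusion).} Letting $y\downarrow 0$ in this formula and invoking \eqref{eq:DonSav}--\eqref{eq:C+-} together with the continuity of $g_1^\uparrow(\cdot,0)$ at $0$, we obtain $g_1^\uparrow(0,0) = k\, \mathtt{C}^-\, g(0)$; thus $k$ cancels in the ratio and yields \eqref{eq:density}. Then \eqref{eq:asfgep} follows by letting $x\downarrow 0$ in \eqref{eq:density} and applying \eqref{eq:DonSav}--\eqref{eq:C+-} once more; the same limit yields the continuity of $f_{\epsilon,t}$ at $0$, while continuity on $(0,\infty)$ is immediate from the continuity of $g^-$.

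The main obstacle is really Step~2: one must justify the duality identity $g_1^+(y,b) = \tilde g_1^+(b,y)$ at the level of densities, and extract the correct asymptotic order $b^{\alpha\rho}$ of $\bP_b(\tilde X_s \ge 0,\, \forall s\in[0,1])$ as $b\downarrow 0$. Both are classical but delicate; fortunately the universal constant $k$ cancels in the final ratio, so only the \emph{existence} of the limit (together with its regular variation of index $\alpha\rho$) is needed, which makes the argument robust.
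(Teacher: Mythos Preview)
Your proof is correct and follows essentially the same architecture as the paper's: scaling reduces to computing $g_1^\uparrow(y,0)$, this is expressed as $g^-(y)/y^{\alpha(1-\rho)}$ up to an unknown multiplicative constant, and that constant is identified by continuity at $0$ together with \eqref{eq:DonSav}--\eqref{eq:C+-}.

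The only noteworthy difference is in your Step~2. The paper reaches the identity $g_1^\uparrow(y,0) = C^{-1}\, g^-(y)/y^{\alpha(1-\rho)}$ more directly: it invokes Theorem~1 of \cite{cf:Cha97}, which states that the meander terminal density $g^-$ is proportional to $x^{\alpha(1-\rho)}\,\tilde g_1^\uparrow(0,x)$, and then applies the duality $g_t^\uparrow(a,b) = \tilde g_t^\uparrow(b,a)$ at the level of the \emph{conditioned} semigroup (Lemma~1 of \cite{cf:Ur11}). You instead apply duality at the level of the \emph{killed} semigroup, $g_1^+(y,b) = \tilde g_1^+(b,y)$, and then pass to the limit $b\downarrow 0$ using the meander as a limit of conditioned bridges. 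This route is perfectly valid but, as you note, requires pointwise convergence of densities rather than just weak convergence; the paper's citation path sidesteps that issue entirely. In either case the constant ($C$ or $k$) remains undetermined until the final step, so the argument is equally robust.
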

\noindent In the sequel we will simply denote $f_{\gep}(x):=f_{\gep,1}(x)$.

\begin{remark}\rm \label{rem:Gauss2}
In the Brownian case $\ga = 2$, $\rho = \frac 12$
everything is explicit.
The density of the Brownian meander is
$g^-(x) = x\, e^{-x^2/2}\, \ind_{(0,\infty)}(x)$,
while the density of $\bP_{0}^\uparrow$ (the Bessel(3) process) at time $t$ is
\begin{equation*}
	\frac{\bP_{0}^\uparrow(X_t \in \dd x)}{\dd x} =
	\sqrt{\frac{2}{\pi t}} \, \frac{x^2}{t} \, e^{-x^2/(2t)}
	= \sqrt{\frac{2}{\pi}} \, \frac{x}{t} \, g^- \bigg( \frac{x}{\sqrt t} \bigg) \,,
\end{equation*}
cf. \cite[\S3 in Chapter~VI]{cf:RevYor}.
Since $\lambda^\uparrow(\dd z) = z^2 \, \dd z$
(recall \eqref{eq:lambdaup}), we obtain
\begin{equation*}
	g_t^\uparrow(0,x) \,:=\, \frac{\bP_{0}^\uparrow(X_t \in \dd x)}
	{\lambda^\uparrow(\dd x)} \,=\,
	\sqrt{\frac{2}{\pi}} \, \frac{1}{t^{3/2}} \, e^{-x^2/(2t)} \,.
\end{equation*}
By symmetry $g_t^\uparrow(x,0) = g_t^\uparrow(0,x)$,
and recalling \eqref{eq:fgept} we find
\begin{equation*}
	f_{\epsilon, t}(x) \,=\, \frac{g_\epsilon^\uparrow(x,0)}{g_t^\uparrow(0,0)}
	\,=\, \frac{t^{3/2}}{\epsilon^{3/2}} \, e^{-x^2/(2\epsilon)} \,.
\end{equation*}
which coincides precisely with the expression in \eqref{eq:density},
because $g(0)=1/\sqrt{2\pi}$ and ${\mathtt C}^\pm = \sqrt{2\pi}$, as it was
shown in Remark~\ref{rem:onlygauss?}.

Also note that the density of $\bP_{0,0}^{\uparrow,1}$ (the Brownian excursion of length~$1$)
at time $1-\gep$ is
\begin{equation*}
\begin{split}
	\frac{\bP_{0,0}^{\uparrow,1}(X_{1-\gep}\in\dd x)}{\dd x}
	& = \frac{2}{\sqrt{2\pi}} \, \frac{1}{1-\gep}
	g^-\bigg(\frac{x}{\sqrt{1-\gep}}\bigg) \,
	\frac{1}{\gep}
	g^-\bigg(\frac{x}{\sqrt{\gep}}\bigg) \,,
\end{split}
\end{equation*}
cf. \cite[\S3 in Chapter~IX]{cf:RevYor}.
Recalling that $f_{\gep}(x) = f_{\gep,1}(x)$ is the Radon-Nikodym derivative
of the law $\bP_{0,0}^{\uparrow,1}$
with respect to $\bP_{0}^\uparrow$ at time $1-\epsilon$,
we can write
\begin{equation*}
	f_{\gep}(x) \,=\,
	\frac{\bP_{0,0}^{\uparrow,1}(X_{1-\gep}\in\dd x)}{\bP_{0}^\uparrow(X_{1-\gep}\in\dd x)}
	= \frac{1}{\gep \, x} \, g^{-} \bigg( \frac{x}{\sqrt{\gep}} \bigg)
	= \frac{1}{\epsilon^{3/2}} \, e^{-x^2/(2\epsilon)} \,,
\end{equation*}
finding again the expression
\eqref{eq:density} (in the special case $t=1$).
\end{remark}


\subsection{Proof of Theorem~\ref{th:main}}
For simplicity, we restrict our attention to the lattice case,
i.e. we assume that the law of $S_1$ is supported by $\Z$ and is aperiodic
(recall Hypothesis \ref{hyp:main2}). The proof in the absolutely continuous case
is almost identical, except that the local limit theorems of Proposition~\ref{th:xq}
must be replaced by the corresponding ones of Proposition~\ref{th:xqabscont}.

Let us fix $a, b \in [0,\infty)$ and $(x_N)_{N\in\N}, (y_N)_{N\in\N}$
sequences in $\N_0$ such that $x_N/a_N \to a$ and $y_N / a_N \to b$
as $N\to\infty$. We
recall that $\phi_N: \Omega^{RW} \to \Omega$ is the map defined by
\begin{equation*}
	(\phi_N(S))(t) := \frac{S_{\lfloor Nt\rfloor}}{a_N} \,,
\end{equation*}
for $N\in\N$, and by extension we still denote by $\phi_N$ the analogous map
defined from $\Omega^{RW}_{N}$ to $\Omega_1$,
or more generally from $\Omega^{RW}_{NT}$ to $\Omega_T$, for any fixed $T>0$.
Our goal is to prove \eqref{eq:main},
and we only focus on the second relation, as the first one
follows exactly the same lines.

\smallskip

When both $a>0$ and $b>0$, there is nothing to prove,
as \eqref{eq:main} follows directly by Liggett's invariance
principle for the bridges~\cite{cf:Lig}. In fact, the latter states that
as $N \to\infty$
\begin{equation} \label{eq:Liggett}
	\bbP^{N}_{x_N, y_N} \circ \phi_N^{-1} \; \Longrightarrow \; \bP^1_{a,b} \,,
\end{equation}
where $\bbP^{N}_{x, y}(\cdot) := \bbP_x(S \in \cdot\, | S_N = y)$
and $\bP^1_{a,b}(\cdot) := \bP_a(X \in \cdot\,|X_1 = b)$ are the bridges
of the random walk and L\'evy process respectively.
Note that we can write
\begin{equation*}
	\widehat\bbP^{\uparrow, N}_{x_N, y_N}(\cdot) \,=\,
	\bbP^{N}_{x_N, y_N}(\cdot\, | S_1 > 0, \ldots, S_N > 0) \,, \qquad
	\bP^{\uparrow, 1}_{a,b} (\cdot) \,=\,
	\bP^{1}_{a, b} \Big( \cdot\, \Big| \inf_{0 \le t \le 1}X_t \ge 0 \Big) \,.
\end{equation*}
Since for $a,b > 0$ the conditioning to stay positive has a non-vanishing probability
under $\bP^{1}_{a, b}$, relation \eqref{eq:main} follows from \eqref{eq:Liggett}.

\smallskip

We now focus on the case $a=b=0$.
The cases $a=0$, $b>0$ and $a>0$, $b=0$ are similar and simpler, so we skip
them for brevity. By \eqref{eq:abscont}, the law $\widehat\bbP_{x,y}^{\uparrow,N}$ is absolutely
continuous with respect to $\widehat\bbP_{x}^{\uparrow}$: more precisely,
recalling relations \eqref{eq:q} and \eqref{eq:hatbbP+},
for all $0 < M < N$ and
for all $B \in \gs(S_0, \ldots, S_M)$, we can write
\begin{equation} \label{eq:basicabscont}
	\widehat\bbP_{x,y}^{\uparrow,N}(B) = \frac{1}{\widehat\bbP_x^\uparrow(S_N=y)}
	\bbE_x^\uparrow(\ind_B \, \widehat\bbP_{S_M}^\uparrow(S_{N-M}=y))
	= \frac{\underline {V}^-(x)}{\widehat q^+_N(x,y)}
	\bbE_x^\uparrow \bigg( \ind_B \,
	\frac{\widehat q^+_{N-M}(S_M, y)}{\underline {V}^-(S_M)} \bigg) \,,
\end{equation}
where we recall for clarity that
$\widehat q^+_n(x,y) := \bbP_x(S_1 > 0, \ldots, S_{n-1}>0, S_n=y)$.
If we introduce for convenience the laws on $\Omega$ and $\Omega_1$ respectively, given by
\begin{equation} \label{eq:pushlaws}
	\cP_x^{\uparrow, (N)} := \widehat\bbP_{a_N x}^{\uparrow} \circ \phi_N^{-1} \,, \qquad
	\cP_{x,y}^{\uparrow, (N)} :=
	\widehat\bbP_{a_N x,\, a_N y}^{\uparrow, N} \circ \phi_N^{-1} \,,
\end{equation}
then from \eqref{eq:basicabscont} with $M=\lfloor(1-\gep)N\rfloor$, ($N\ge2$, $0<\varepsilon<1/2$),
we infer that $\cP_{x/a_N,\, y/a_N}^{\uparrow, (N)}$ is absolutely
continuous with respect to $\cP_{x/a_N}^{\uparrow, (N)}$ (restricted to $\Omega_1$)
on the $\gs$-field $\gs(X_s, 0 \le s \le N^{-1}\lfloor(1-\gep)N\rfloor)$,
for all $\gep > 0$. More precisely, for $B \in \gs(X_s, 0 \le s \le N^{-1}\lfloor(1-\gep)N\rfloor)$
we can write
\begin{equation} \label{eq:basicabscont'}
	\cP_{x_N/a_N,\, y_N/a_N}^{\uparrow, (N)}(B) =
	\cE_{x_N/a_N}^{\uparrow, (N)} (\ind_B \,
	f^{(N)}_{\gep}(X_{N^{-1}\lfloor(1-\gep)N\rfloor})) \,,
\end{equation}
where $\cE_{x}^{\uparrow, (N)}$ denotes the expectation under $\cP_x^{\uparrow, (N)} $ and with
$\varepsilon(N)=N-\lfloor(1-\varepsilon)N\rfloor$,
\begin{equation} \label{eq:deffN}
	f^{(N)}_{\gep}(z) := \frac{\underline {V}^-(x_N)}{\widehat q^+_N(x_N,y_N)}\,
	\frac{\widehat q^+_{\varepsilon(N)}(\lfloor z a_N \rfloor , y_N)}
	{\underline {V}^-( z a_N )} \,.
\end{equation}
Recall the definition of $f_{\gep} := f_{\gep, 1}$ in (\ref{eq:density}).
We state the following basic lemma.

\begin{lemma} \label{th:uniff}
The following uniform convergence holds:
\begin{equation}\label{eq:unifconv}
	\lim_{N\rightarrow\infty} \
	\sup_{z\in\R} |f^{(N)}_{\gep}(z) - f_{\gep}(z)| = 0 \,.
\end{equation}
\end{lemma}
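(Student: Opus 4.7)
The plan is to plug the local limit theorems of Proposition~\ref{th:xq} into the definition~\eqref{eq:deffN} of $f^{(N)}_\gep$, obtain an explicit asymptotic expression that agrees pointwise with the form~\eqref{eq:density} of $f_\gep$, and then upgrade pointwise convergence to uniform convergence by splitting the supremum over $z\in[0,\infty)$ into three well-chosen ranges.

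Since $a=b=0$, the hypothesis gives $x_N,y_N=o(a_N)$, and the second relation in \eqref{eq:xqsmall}, together with \eqref{eq:underVhatV}, yields
\[
\widehat q^+_N(x_N,y_N)\,=\,\frac{g(0)\,(1-\zeta)}{N\,a_N}\,\underline V^-(x_N)\,\underline V^+(y_N)\,(1+o(1)).
\]
Writing $\gep(N)=N-\lfloor(1-\gep)N\rfloor\sim \gep N$ and $a_{\gep(N)}\sim \gep^{1/\alpha}a_N$, we have $y_N=o(a_{\gep(N)})$, so the second relation in \eqref{eq:xqlargeinv}, which is uniform in $x\ge 0$, applies to the numerator of \eqref{eq:deffN} and gives
\[
\widehat q^+_{\gep(N)}\bigl(\lfloor z a_N\rfloor,y_N\bigr)\,=\,\frac{\bbP(\tau_1^+>\gep(N))}{a_{\gep(N)}}\,\underline V^+(y_N)\,\Bigl[g^-\!\Bigl(\tfrac{\lfloor z a_N\rfloor}{a_{\gep(N)}}\Bigr)+r_N(z)\Bigr]
\]
with $\sup_{z\ge 0}|r_N(z)|\to 0$. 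Inserting both formulas in \eqref{eq:deffN} and cancelling the common factors involving $\underline V^-(x_N)$, $\underline V^+(y_N)$ and $(1-\zeta)$, then using \eqref{eq:lien} to write $\underline V^-(a_N)\sim \tfrac{\mathtt C^-}{1-\zeta}\,N\,\bbP(\tau_1^+>N)$, the regular variation $\bbP(\tau_1^+>\cdot)\in R_{-\rho}$ and $a_n\in R_{1/\alpha}$, and Lemma~\ref{th:CC}, one obtains
\[
f^{(N)}_\gep(z)\,=\,\frac{\gep^{-\rho-1/\alpha}}{\mathtt C^-\,g(0)}\,(1+\gamma_N)\,\frac{g^-\!\bigl(\tfrac{\lfloor z a_N\rfloor}{a_{\gep(N)}}\bigr)+r_N(z)}{\underline V^-(z a_N)/\underline V^-(a_N)},
\]
with $\gamma_N\to 0$ independent of $z$. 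Comparison with \eqref{eq:density} shows that the right-hand side converges pointwise to $f_\gep(z)$ for every $z>0$.

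To make the convergence uniform we split $[0,\infty)$ into three regions. On a compact interval $[\delta,K]$ with $0<\delta<K<\infty$, Karamata's uniform convergence theorem applied to $\underline V^-\in R_{\alpha(1-\rho)}$ (see \eqref{eq:sumup2}) gives $\underline V^-(za_N)/\underline V^-(a_N)\to z^{\alpha(1-\rho)}$ uniformly in $z\in[\delta,K]$, while the uniform continuity of $g^-$ on compact subsets of $(0,\infty)$ together with $a_{\gep(N)}/a_N\to \gep^{1/\alpha}$ make the convergence of the $g^-$-term uniform; combined with the uniform smallness of $r_N$ and $\gamma_N$ this gives uniform convergence on $[\delta,K]$. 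For $z\ge K$, Potter's bounds for regularly varying functions give $\underline V^-(z a_N)/\underline V^-(a_N)\ge c\,z^{\alpha(1-\rho)-\eta}$ for some $c,\eta>0$, all $z\ge 1$ and $N$ large; since the numerator of $f^{(N)}_\gep(z)$ is uniformly bounded, we deduce $f^{(N)}_\gep(z)\le C/z^{\alpha(1-\rho)-\eta}$, and analogously $f_\gep(z)\to 0$ as $z\to\infty$, so both can be made arbitrarily small uniformly in $N$ by taking $K$ large.

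The delicate range is $z\in[0,\delta]$ with $\delta$ small, where $\lfloor z a_N\rfloor$ itself becomes $o(a_{\gep(N)})$ and the asymptotic regime in the numerator changes. Here we re-apply Proposition~\ref{th:xq}, this time using the second relation in \eqref{eq:xqsmall} for $\widehat q^+_{\gep(N)}(\lfloor z a_N\rfloor,y_N)$. An analogous cancellation then gives
\[
f^{(N)}_\gep(z)\,=\,\frac{N\,a_N}{\gep(N)\,a_{\gep(N)}}\,\frac{\underline V^-(\lfloor z a_N\rfloor)}{\underline V^-(z a_N)}\,(1+o(1)),
\]
whose first factor tends to $\gep^{-(1+1/\alpha)}=f_\gep(0)$ and whose second factor is uniformly close to $1$ on $[0,\delta]$ by sub-additivity of $\underline V^-$. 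Continuity of $f_\gep$ at $0$ (Lemma~\ref{th:density}), the asymptotic $g^-(u)\sim \tilde{\mathtt C}^- g(0)\,u^{\alpha(1-\rho)}$ from \eqref{eq:DonSav} and the identity $\mathtt C^-=\tilde{\mathtt C}^-$ from Lemma~\ref{th:CC} ensure that the two asymptotic formulas match consistently across the transition. The chief obstacle is precisely this gluing in the intermediate regime: the additive error $r_N(z)$ in \eqref{eq:xqlargeinv} is only uniformly small in $z$, not small relative to $g^-\!\bigl(\tfrac{\lfloor z a_N\rfloor}{a_{\gep(N)}}\bigr)$ when the latter vanishes, which is what forces the switch to \eqref{eq:xqsmall} near the origin.
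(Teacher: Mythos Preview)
Your overall strategy---plug the local limit theorems of Proposition~\ref{th:xq} into \eqref{eq:deffN} and split $[0,\infty)$ into small/moderate/large $z$---is the right one, and your treatment of the compact region $[\delta,K]$ and of the tail $[K,\infty)$ is sound. However, there is a genuine gap in the region $[0,\delta]$.

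The second relation in \eqref{eq:xqsmall} requires \emph{both} arguments to be $o(a_n)$. When you apply it to $\widehat q^+_{\gep(N)}(\lfloor z a_N\rfloor,y_N)$ with $n=\gep(N)$, this means $\lfloor z a_N\rfloor=o(a_{\gep(N)})$, i.e.\ $z\to 0$. In your splitting, though, $\delta>0$ is \emph{fixed} before $N\to\infty$, so for $z$ near $\delta$ one has $\lfloor z a_N\rfloor\sim \delta\,\gep^{-1/\alpha}a_{\gep(N)}$, which is not $o(a_{\gep(N)})$. Hence the displayed formula $f^{(N)}_\gep(z)=\frac{N a_N}{\gep(N)a_{\gep(N)}}\cdot\frac{\underline V^-(\lfloor z a_N\rfloor)}{\underline V^-(z a_N)}(1+o(1))$ is not justified uniformly on $[0,\delta]$. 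You diagnose this correctly in the last paragraph (the additive error $r_N(z)$ divided by a vanishing denominator), but invoking \eqref{eq:xqsmall} does not cure it, and saying that ``the two asymptotic formulas match consistently across the transition'' is not a proof: there is a middle scale where neither estimate applies uniformly.

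The paper sidesteps this entirely by using the sequence reformulation from \S\ref{sec:notation}: the uniform statement $\sup_z|f^{(N)}_\gep(z)-f_\gep(z)|\to 0$ is equivalent to $f^{(N)}_\gep(z_N)-f_\gep(z_N)\to 0$ for every sequence $z_N\to\bar z\in[0,\infty]$. The three cases $\bar z\in(0,\infty)$, $\bar z=0$, $\bar z=+\infty$ correspond to your three regions, but now there is no gluing: when $\bar z=0$ one has $z_N\to 0$, hence $\lfloor z_N a_N\rfloor=o(a_N)=o(a_{\gep(N)})$ \emph{automatically}, and \eqref{eq:xqsmall} applies directly. The subsequence device absorbs what in your approach would have to be a $\delta$ depending on $N$.
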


The proof is given below.
To complete the proof of Theorem~\ref{th:main}, we first prove that, for all $\gep>0$
and for every bounded and continuous functional $F$ on $\Omega_1$ which is
measurable with respect to $\gs(X_s, 0 \le s \le 1-\gep)$, one has
\begin{equation} \label{eq:conv}
	\lim_{N\to\infty} \cE_{x_N/a_N,y_N/a_N}^{\uparrow, (N)} (F) \,=\,
	\bE_{0,0}^{\uparrow,1}(F)  \,.
\end{equation}
Let $\varepsilon'<\varepsilon$ and $N$ sufficiently large so that
$1-\varepsilon<N^{-1}\lfloor(1-\varepsilon')N\rfloor$. Then from
(\ref{eq:bP00+}),
\begin{eqnarray*}
\cE_{x_N/a_N,y_N/a_N}^{\uparrow, (N)} (F)&=&
\cE_{x_N/a_N}^{\uparrow, (N)} (f^{(N)}_{\gep'}(X_{N^{-1}\lfloor(1-\varepsilon')N\rfloor}) \cdot F)\,.
\end{eqnarray*}
On the other hand, from (\ref{eq:basicabscont'}) and the Markov property,
\begin{eqnarray*}
\bE_{0,0}^{\uparrow,1}(F)&=&\bE_{0}^\uparrow(f_{\gep}(X_{1-\gep}) \cdot F)\\
&=&\bE_{0}^\uparrow(f_{\gep'}(X_{1-\gep'}) \cdot F)\,.
\end{eqnarray*}
Looking at (\ref{eq:inv+}), we see that (\ref{eq:conv}) is equivalent to showing that
\begin{equation} \label{eq:conv1}
	\lim_{N\to\infty} \cE_{x_N/a_N}^{\uparrow, (N)} (f^{(N)}_{\gep'}(X_{N^{-1}\lfloor(1-\varepsilon')N\rfloor}) \cdot F)
	= \bE_{0}^\uparrow(f_{\gep'}(X_{1-\gep'}) \cdot F)\,.
\end{equation}
By \eqref{eq:unifconv}, for every $\eta > 0$ there exists $N_0 < \infty$
such that $|f^{(N)}_{\gep'}(z) - f_{\gep'}(z)| \le \eta$, for all
$N\ge N_0$ and $z \in [0,\infty)$. It follows that for $N \ge N_0$,
\begin{eqnarray*}
	&&| \cE_{x_N/a_N}^{\uparrow, (N)} (f^{(N)}_{\gep'}(X_{N^{-1}\lfloor(1-\varepsilon')N\rfloor}) \cdot F)
	- \bE_{0}^\uparrow(f_{\gep'}(X_{1-\gep'}) \cdot F) |\nonumber\\
 &&\qquad\qquad \qquad\le\eta + | \cE_{x_N/a_N}^{\uparrow, (N)} (f_{\gep'}(X_{N^{-1}\lfloor(1-\varepsilon')N\rfloor}) \cdot F)-
\bE_{0}^\uparrow(f_{\gep'}(X_{1-\gep'}) \cdot F) |\,.
\end{eqnarray*}
By (\ref{eq:inv+}) and Skorokhod's representation theorem, there exist processes $Y^{(N)}$, $Y$ on a probability space
$(\Omega',\mathcal{F}, \p)$, such that $(Y^{(N)},\p)
\overset{d}{=}(X,\cP_{x_N/a_N}^{\uparrow, (N)})$, $(Y,\p)\overset{d}{=}(X,\bP_{0}^\uparrow)$, and such
that $Y^{(N)}$ converges $\p$--a.s.~toward $Y$. Since $f_{\gep'}(\cdot)$ is continuous and $Y$ is $\p$--a.s.
continuous at time $1-\gep'$, the sequence $f_{\gep'}(Y^{(N)}_{N^{-1}\lfloor(1-\varepsilon')N\rfloor})$ converges $\p$--a.s.
toward $f_{\gep'}(Y_{1-\gep'})$, so that by dominated convergence (recall that $F$ is bounded), for all $N\ge N_1$,
 \begin{eqnarray*}
&& | \e(f_{\gep'}(Y^{(N)}_{N^{-1}\lfloor(1-\varepsilon')N\rfloor}) \cdot F)-
\e(f_{\gep'}(Y_{1-\gep'}) \cdot F)|=\\
&&| \cE_{x_N/a_N}^{\uparrow, (N)} (f_{\gep'}(X_{N^{-1}\lfloor(1-\varepsilon')N\rfloor}) \cdot F)-
\bE_{0}^\uparrow(f_{\gep'}(X_{1-\gep'}) \cdot F) |\le \eta\,.
\end{eqnarray*}
Hence for $N\ge\max(N_0,N_1)$, we have
\begin{equation*}
	| \cE_{x_N/a_N}^{\uparrow, (N)} (f^{(N)}_{\gep'}(X_{N^{-1}\lfloor(1-\varepsilon')N\rfloor}) \cdot F)
	- \bE_{0}^\uparrow(f_{\gep'}(X_{1-\gep'}) \cdot F) | \le
	2 \eta \,.
\end{equation*}
Since $\eta > 0$ was arbitrary, \eqref{eq:conv1} is proved.

Relation \eqref{eq:conv}
shows that the sequence of probability distributions $(\cP_{x_N/a_N,y_N/a_N}^{\uparrow, (N)})$
restricted to $\Omega_{1-\varepsilon}$ converges weakly on this space, as $N\rightarrow+\infty$ toward $(\bP_{0,0}^{\uparrow,1})$,
for all $\varepsilon\in(0,1)$. In particular, the sequence of probability distributions $(\cP_{x_N/a_N,y_N/a_N}^{\uparrow, (N)})$
converges on $\Omega_{1}$ in the sense of finite dimensional distributions toward $(\bP_{0,0}^{\uparrow,1})$, as
$N\rightarrow+\infty$. Then in order to prove the weak convergence of this sequence, it remains to check
that it is tight on $\Omega_1$. From Theorem 15.3 of Billingsley \cite{cf:Bil}, it suffices to show that for all $\eta>0$,
\begin{equation}\label{eq:tight}
   \lim_{\delta\rightarrow0}\limsup_{N\rightarrow+\infty}\cP_{x_N/a_N,y_N/a_N}^{\uparrow, (N)}\left(\sup_{s,t\in[1-\delta,1]}|X_t-X_s|>\eta\right)=0\,.
\end{equation}
But, this follows from the time reversal properties, i.e. Lemma \ref{eq:reversal1} and Lemma \ref{eq:reversal2}. Indeed, by Lemma~\ref{eq:reversal1},
under the law $\cP_{x_N/a_N,y_N/a_N}^{\uparrow, (N)}$ the process
$(X_t-X_{(t-s)-},\,0\le s\le t)$ has law $\tilde{\cP}_{y_N/a_N,x_N/a_N}^{\uparrow, (N)}$, with an obvious notation.
Moreover,  from what have been proved above applied to $-S$ and $-X$, we obtain that
the sequence of probability distributions $(\tilde{\cP}_{y_N/a_N,x_N/a_N}^{\uparrow, (N)})$ restricted to $\Omega_{1-\varepsilon}$
converges weakly on this space, as $N\rightarrow+\infty$ toward $(\tilde{\bP}_{0,0}^{\uparrow,1})$, for all $\varepsilon\in(0,1)$.
Hence from Theorem 15.3 of Billingsley \cite{cf:Bil}, we have
\begin{equation}
   \lim_{\delta\rightarrow0}\limsup_{N\rightarrow+\infty}\tilde{\cP}_{y_N/a_N,x_N/a_N}^{\uparrow, (N)}\left(\sup_{s,t\in[0,\delta]}|X_t-X_s|>\eta\right)=0\,,
\end{equation}
which is precisely (\ref{eq:tight}), thanks to our time reversal argument.

\begin{proof}[Proof of Lemma~\ref{th:uniff}]

From the argument recalled in section \ref{sec:notation},
relation \eqref{eq:unifconv} is equivalent to the convergence
\begin{equation}\label{eq:unifconvsimple}
	\lim_{N\rightarrow\infty} \ 	|f^{(N)}_{\gep}(z_N) - f_{\gep}(z_N)| = 0 \,,
\end{equation}
\emph{for every $\overline{z} \in [0,+\infty]$ and for every
sequence $(z_N)_{N\in\N}$ in $[0,\infty)$ such that $z_N \to \overline{z}$}.

We start with the case $\overline{z} \in (0,\infty)$.
Since by assumption both $x_N, y_N = o(a_N)$, by
the second relation in \eqref{eq:xqsmall} we have as $N\to\infty$.
\begin{equation} \label{eq:barr}
	\widehat q^+_N(x_N,y_N) \sim \underline {V}^-(x_N) \,
	\underline {\widehat V}^+(y_N)
	\, \frac{g(0)}{N \, a_N} \,.
\end{equation}
By the second relation in \eqref{eq:xqlargeinv} we get
\begin{equation} \label{eq:barr2}
	\widehat q^+_{\varepsilon(N)}(\lfloor z_N a_N \rfloor , y_N) \sim
	\underline {\widehat V}^+(y_N) \, \frac{\bbP(\widehat \tau_1^+ > \varepsilon(N))}{a_{\varepsilon(N)}} \,
	g^-\bigg( z_N \frac{a_N}{a_{\varepsilon(N)}} \bigg) \,,
\end{equation}
where we have applied relations \eqref{eq:underVhatV} and \eqref{eq:usefuly}.
Therefore we can write, as $N\to\infty$
\begin{equation} \label{eq:auxf}
	f^{(N)}_{\gep}(z_N) \sim \frac{1}{g(0)} \, \frac{a_N}{a_{\varepsilon(N)}} \,
	\frac{N \, \bbP(\widehat \tau_1^+ >\varepsilon(N))}
	{\underline {V}^-(z_N a_N)} \,
	g^-\bigg( z_N \frac{a_N}{a_{\varepsilon(N)}} \bigg)\,.
\end{equation}
We know from \S\ref{sec:conseq}, cf. in particular \eqref{eq:sumup1} and \eqref{eq:sumup2}, that
\begin{equation*}
	a_N \in R_{1/\ga} \,, \qquad
	\underline {V}^-(\cdot) \in R_{\ga(1-\rho)} \,, \qquad
	\bbP(\widehat \tau_1^+ > N) \in R_{-\rho} \,,
\end{equation*}
therefore as $N\to\infty$, since $z_N \to \overline z \in (0,\infty)$,
\begin{equation*}
	a_{\varepsilon(N)} \sim \gep^{1/\ga} a_N \,, \quad
	\underline {V}^-(z_N a_N) \sim {\overline z}^{\ga(1-\rho)} \,
	\underline {V}^-(a_N) \,, \quad
	\bbP(\widehat \tau_1^+ > \varepsilon(N)) \sim \gep^{-(1-\rho)}
	\bbP(\widehat \tau_1^+ > N) \,.
\end{equation*}
Recalling \eqref{eq:lien}, \eqref{eq:underVhatV} and \eqref{eq:usefuly}, we obtain
\begin{equation} \label{eq:lala}
	N \, \bbP(\widehat \tau_1^+ > N) \sim \frac{\underline V^-(a_N)}{{\mathtt C}^-} \,.
\end{equation}
Since $z_N \to \overline{z} \in (0,\infty)$ by assumption,
from the preceding relation we get
\begin{equation} \label{eq:limZ}
	\lim_{N\to\infty} f^{(N)}_{\gep}(z_N) =
	\frac{1}{{\mathtt C}^- \, g(0)}
	\, \gep^{-1} \, \gep^{-1/\ga}\, \frac{g^-(\gep^{-1/\ga}\, \overline{z})}
	{(\gep^{-1/\ga}\overline{z})^{\ga(1-\rho)}}
	= f_\epsilon(\overline{z}) = \lim_{N\to\infty} f_\epsilon(z_N)\,,
\end{equation}
where the second equality follows by \eqref{eq:C+-} and the definition \eqref{eq:density}
of $f_\epsilon = f_{\epsilon,1}$.
We have shown that \eqref{eq:unifconvsimple}
holds true when $\overline{z} \in (0,\infty)$.

Next we consider the case $\overline{z} = 0$, so that
$z_N a_N = o(a_N)$. By the second relation in \eqref{eq:xqsmall},
we have
\begin{equation} \label{eq:barr3}
	\widehat q^+_{\varepsilon(N)}(\lfloor z_N a_N\rfloor , y_N) \sim
	\underline {V}^-(z_N a_N) \,
	\underline {\widehat V}^+(y_N)
	\, \frac{g(0)}{\varepsilon(N) \, a_{\varepsilon(N)}} \,,
\end{equation}
therefore by \eqref{eq:barr} and \eqref{eq:deffN} we obtain as $N\to\infty$
\begin{equation*}
	f^{(N)}_{\gep}(z_N) \sim \frac{a_N}{\gep a_{\varepsilon(N)}} \sim
	\frac{1}{\epsilon^{1+1/\alpha}} = f_\gep(0)
	= \lim_{N\to\infty} f_\gep(z_N) \,,
\end{equation*}
by \eqref{eq:asfgep} and the continuity of $f_\gep(\cdot)$.
This shows that \eqref{eq:unifconvsimple}
holds true also when $\overline{z} = 0$.

Finally, we have to consider the case $\overline{z} = +\infty$.
Since $g^-(z) \to 0$ as $z \to +\infty$, by the second relation in \eqref{eq:xqlargeinv}
we can write
\begin{equation} \label{eq:barr4}
	\widehat q^+_{\varepsilon(N)}(\lfloor z_N a_N \rfloor , y_N) =
	\underline {\widehat V}^+(y_N) \,
	\frac{\bbP(\widehat \tau_1^+ > \varepsilon(N))}{a_{\varepsilon(N)}} \, o(1) \,.
\end{equation}
Recalling \eqref{eq:barr} and \eqref{eq:deffN}, we obtain as $N\to\infty$
\begin{equation*}
	f^{(N)}_{\gep}(z_N) = \frac{1}{g(0)} \, \frac{a_N}{a_{\varepsilon(N)}}
	\frac{N \, \bbP(\widehat \tau_1^+ > \varepsilon(N))}{\underline {V}^-(z_N a_N)} \, o(1) \,.
\end{equation*}
Observe that $a_N/a_{\varepsilon(N)}$ is bounded and
$\underline {V}^-(z_N a_N)/\underline {V}^-(a_N) \to +\infty$,
because $z_N \to +\infty$. Recalling \eqref{eq:lala}, it follows
that $f^{(N)}_{\gep}(z_N) \to 0$, i.e. \eqref{eq:unifconvsimple}
holds also when $\overline z = +\infty$.
\end{proof}



\appendix
\section{Proof of Lemma~\ref{th:harm}}
\label{sec:proofharm}

The proof is a slight generalization of Proposition~B.1 in~\cite{cf:CarCha}.
Plainly, it is sufficient to show that \eqref{eq:V} and
\eqref{eq:V-} hold for $N = 1$, that is
\begin{gather} \label{eq:V1}
    V^-(x) = \bbE_x \big( V^-(S_1)\, \ind_{(S_1 \ge 0)} \big) \quad
    \forall x \ge 0 \,, \\
    \label{eq:V1-}
    \underline V^-(x) = \bbE_x \big( \underline V^-(S_1)\, \ind_{(S_1 > 0)} \big) \quad
    \forall x \ge 0 \,,
\end{gather}
and the general case follows by the Markov property. Also note that
from \eqref{eq:V1} we can write, for $x > 0$ and $\gep \in (0,x)$,
\begin{equation*}
    V^-(x-\gep) = \bbE \big( V^-(S_1+x-\gep)\, \ind_{(S_1 + x \ge \gep)} \big) \,.
\end{equation*}
Letting $\gep \downarrow 0$,
recalling that $\underline V^-(x) := \lim_{\gep \downarrow 0}V^-(x-\gep)$
and using monotone convergence, we obtain
for all $x>0$
\begin{equation*}
	\underline V^-(x) = \bbE \big( \underline V^-(S_1+x)\, \ind_{(S_1 + x > 0)} \big)
	= \bbE_x \big( \underline V^-(S_1)\, \ind_{(S_1 > 0)} \big) \,,
\end{equation*}
which is nothing but \eqref{eq:V1-} for $x>0$. It therefore
suffices to prove \eqref{eq:V1} and the special case of \eqref{eq:V1-} for $x=0$.
Recalling that $V^-(\cdot)$ and $\widehat V^-(\cdot)$ differ by a constant
multiple, cf. \eqref{eq:VhatV}, we are left with proving that
\begin{align} \label{eq:V1bis}
    \widehat V^-(x) & = \bbE_x \big( \widehat V^-(S_1)\, \ind_{(S_1 \ge 0)} \big), \qquad
    \forall x \ge 0 \,,\\
	\label{eq:V1-0} 	1 & = \bbE \big( \underline V^-(S_1)\, \ind_{(S_1 > 0)} \big) \,.
\end{align}

We first prove the special case $x=0$ of \eqref{eq:V1bis}:
by the definition \eqref{eq:defVhat} of $\widehat V^-(\cdot)$
and the Duality Lemma (cf. Remark~\ref{th:otherproof}), we can write
\begin{align*}
    & \bbE(\widehat V^-(S_1) \ind_{(S_1 \ge 0)})
    = \int_{(y\ge 0)} \bbP \big( S_1 \in \dd y \big) \, \widehat V^- (y) \\
    & \quad \qquad= \int_{(y\ge 0)} \bbP \big( S_1 \in \dd y \big)
    \sum_{n\ge 0} \sum_{k\ge 0} \bbP \big( \widehat \tau_k^- = n,\, S_n \ge -y \big) \\
    & \quad \qquad= \ \bbP (S_1 \ge 0) \ + \
    \sum_{n\ge 1} \int_{(y\ge 0)}
    \bbP \big( S_1 \in \dd y \big)
    \bbP \big( S_1 < 0, \ldots, S_n < 0,\, S_n \ge -y \big) \,.
\end{align*}
Summing on the values of $S_n$ and using the Markov property, we obtain
\begin{align*}
    & \bbE(\widehat V^-(S_1) \ind_{(S_1 \ge 0)}) \\
    & \quad \qquad= \ \bbP \big(\tau_1^+ = 1\big) \
    + \ \sum_{n\ge 1} \int_{(z < 0)} \bbP \big( S_1 < 0, \ldots,
    S_{n-1} < 0, S_n \in \dd z \big)
    \, \bbP \big( S_1 \ge - z \big)\\
    & \quad \qquad= \ \bbP \big( \tau_1^+ = 1\big) \ + \
    \sum_{n\ge 1} \bbP \big( S_1 < 0, \ldots, S_n < 0,\, S_{n+1} \ge 0 \big)
    = \sum_{m\in\N} \bbP \big( \tau_1^+ = m \big)\,.
\end{align*}
But $\sum_{m\in\N} \bbP ( \tau_1^+ = m ) =
\bbP \big( \tau_1^+ < \infty \big) = 1$, because by hypothesis
$\limsup_k S_k = + \infty$, $\bbP$--a.s.. Since
$\widehat V^-(0)=1$, equation \eqref{eq:V1bis} is proved for $x=0$.
Analogously,
\begin{align*}
    & \bbE( \underline V^-(S_1) \ind_{(S_1 > 0)})
    = \int_{(y> 0)} \bbP \big( S_1 \in \dd y \big) \, \underline V^- (y) \\
    & \quad \qquad= \int_{(y> 0)} \bbP \big( S_1 \in \dd y \big)
    \sum_{n\ge 0} \sum_{k\ge 0} \bbP \big( \tau_k^- = n,\, S_n > -y \big) \\
    & \quad \qquad= \ \bbP (S_1 > 0) \ + \
    \sum_{n\ge 1} \int_{(y> 0)}
    \bbP \big( S_1 \in \dd y \big)
    \bbP \big( S_1 \le 0, \ldots, S_n \le 0,\, S_n > -y \big) \\
    & \quad \qquad= \ \bbP \big(\widehat \tau_1^+ = 1\big) \
    + \ \sum_{n\ge 1} \int_{(z \le 0)} \bbP \big( S_1 \le 0, \ldots,
    S_{n-1} \le 0, S_n \in \dd z \big)
    \, \bbP \big( S_1 > - z \big)\\
    & \quad \qquad= \ \bbP \big( \widehat \tau_1^+ = 1\big) \ + \
    \sum_{n\ge 1} \bbP \big( S_1 \le 0, \ldots, S_n \le 0,\, S_{n+1} > 0 \big)
    = \sum_{m\in\N} \bbP \big( \widehat \tau_1^+ = m \big)\,.
\end{align*}
Plainly $\sum_{m\in\N} \bbP ( \widehat \tau_1^+ = m ) =
\bbP \big(\widehat \tau_1^+ < \infty \big) = \bbP \big(\tau_1^+ < \infty \big) = 1$,
and since by definition
$\underline {V}^-(0)=1$, equation \eqref{eq:V1-0} is proved.

It only remains to prove \eqref{eq:V1bis} for $x>0$.
Recalling that $\widehat V^-(x) = \bbE_x[\#\{k\ge 0: \widehat H_k^- \in [0, x]\}]$,
we can condition the random variable $\#\{k\ge 0: \widehat H_k^- \in [0, x]\}$
on $S_1$ and use the Markov property of $S$, getting for $x> 0$
\begin{align*}
    \widehat V^-(x) & \;=\; \int_\R \bbP \big( S_1 \in \dd y \big)
    \Big\{ \big(1+\widehat V^-(x+y) - \widehat V^-(y)\big)\, \ind_{(y\ge 0)}\\
    & \qquad \qquad \qquad \qquad + \ \big( 1 + \widehat V^-(x+y) \big)\,
    \ind_{(y \in [-x,0))}\ + \ \ind_{(y < -x)} \Big\}\\
    & \;=\; 1 \,+\,
    \bbE \big( \widehat V^-(S_1 + x) \, \ind_{(S_1 +x \ge 0)} \big) \, - \,
    \bbE \big( \widehat V^- (S_1) \, \ind_{(S_1 \ge 0)} \big) \\
    & \;=\; \bbE_x \big( \widehat V^-(S_1) \, \ind_{(S_1 \ge 0)} \big) \,,
\end{align*}
having used \eqref{eq:V1bis} for $x=0$. This proves \eqref{eq:V1bis} for every
$x>0$ and we are done.\qed

\section{Complements on the local limit theorems in the lattice case}
\label{ap:llt-compl}

\subsection{Proof of the second relation in \eqref{eq:xqsmall} in the case $x=y=0$}
\label{sec:45-2}

We start from a basic inequality by Alili and Doney,
cf. equation~(3) in~\cite{cf:AliDon}:
\begin{equation} \label{eq:abau}
	\widehat q_n^+(0,0) = \bbP(\tau_1^- = n, H_1^- = 0)
	= \frac 1n\, \bbP(H_1^- > 0, S_n=0) \,,
\end{equation}
which reads in general as
$\bbP(\tau_k^- = n, H_k^- = x) = \frac{k}{n} \, \bbP( H_{k-1}^- \le x < H_k^-,\, S_n = x)$
(the interchange of $<$ and $\le$ with respect to \cite{cf:AliDon} is that we
consider weak rather than strict ladder variables).
Let us show that the events $\{H_1^- > 0\}$ and $\{S_n=0\}$,
or equivalently $\{H_1^- = 0\}$ and $\{S_n=0\}$, become
asymptotically independent for large $n$. Using the Markov property, we have
\begin{equation*}
	\bbP( H_1^- = 0 | S_n = 0) = \sum_{k=1}^n \bbP( \tau_1^- = k,
	H_1^- = 0) \, \frac{\bbP(S_{n-k}=0)}{\bbP(S_n = 0)} \, \,.
\end{equation*}
We split the sum in the three ranges $1 \le k \le \sqrt{n}$,
$\sqrt{n} < k \le \frac{n}{2}$ and $\frac{n}{2} < k \le n$.
Gnedenko's local limit theorem \eqref{eq:Gnedenko} yields
$\bbP(S_m = 0) \sim g(0)/a_m$
as $m\to\infty$, therefore $\bbP(S_{n-k}=0)/\bbP(S_n = 0) \to 1$
as $n\to\infty$, uniformly in $1 \le k \le \sqrt{n}$. It follows that
as $n\to\infty$
\begin{equation*}
	\sum_{k=1}^{\sqrt{n}} \bbP( \tau_1^- = k,
	H_1^- = 0) \, \frac{\bbP(S_{n-k}=0)}{\bbP(S_n = 0)}
	\sim \sum_{k=1}^{\sqrt{n}} \bbP( \tau_1^- = k,
	H_1^- = 0) \to \bbP( H_1^- = 0) \,.
\end{equation*}
Since $\bbP(S_{n-k}=0)/\bbP(S_n = 0) \le (const.)$ for all $n \in \N$
and $\sqrt{n} < k \le \frac{n}{2}$, again by Gnedenko's local limit theorem \eqref{eq:Gnedenko},
it follows that as $n\to\infty$
\begin{equation*}
	\sum_{k=\sqrt{n}}^{n/2} \bbP( \tau_1^- = k,
	H_1^- = 0) \, \frac{\bbP(S_{n-k}=0)}{\bbP(S_n = 0)}
	\le (const.) \, \bbP(\tau_1^- > \sqrt{n}) \to 0 \,.
\end{equation*}
Finally, by \eqref{eq:abau} we have $\bbP( \tau_1^- = k,
H_1^- = 0) \le \frac{1}{k} \, \bbP(S_k = 0) \le (const.) k^{-1-1/\alpha}$.
Recalling that $\bbP(S_m = 0) \sim g(0) m^{-1/\alpha}$,
it follows that
\begin{equation*}
	\sum_{k=n/2}^n \bbP( \tau_1^- = k,
	H_1^- = 0) \, \frac{\bbP(S_{n-k}=0)}{\bbP(S_n = 0)}
	\le (const.') \left( \frac{n}{2} \right)^{-1-1/\alpha} \, n^{1/\alpha}
	\sum_{m=1}^{n/2} \frac{1}{m^{1/\alpha}} \to 0 \,,
\end{equation*}
as $n\to\infty$. These relations show that
$\bbP( H_1^- = 0 | S_n = 0) \to \bbP( H_1^- = 0)$ as $n\to\infty$, hence
$\bbP( H_1^- > 0 | S_n = 0) \to \bbP( H_1^- > 0)$ as $n\to\infty$.
Recalling \eqref{eq:abau} and Gnedenko's local limit theorem
\eqref{eq:Gnedenko}, as $n\to\infty$ we finally get
\begin{equation*}
	\widehat q_n^+(0,0) \sim \frac 1n\, \bbP(H_1^- > 0) \, \bbP(S_n=0)
	\sim \frac 1n \, \bbP(H_1^->0) \, \frac{g(0)}{a_n} \,,
\end{equation*}
which coincides with the second relation
in \eqref{eq:xqsmall}, because $\underline {\widehat V}^+(0) = \bbP(H_1^\pm > 0) = 1-\zeta$.


\subsection{Proof of the first relation in \eqref{eq:xqsmall} in the case $x=y=0$}
\label{sec:45-1}

We set $K(n) := \widehat q_n^+(0,0) = \bbP
(\tau_1^- = n, H_1^- = 0)$
and note that $\sum_{n\in\N} K(n) = \bbP(H_1^- = 0) = \zeta \in (0,1)$,
so that $K(\cdot)$ may be viewed as a defective probability on $\N$.
Summing over the location of the times $t \le n$ at which $S_t = 0$, we obtain
for all $n\in\N$
\begin{equation} \label{eq:qn+00}
	q_n^+(0,0) = \sum_{m=1}^\infty \sum_{0 =: t_0 < t_1 < \ldots < t_m = n}
	K(t_1) K(t_2 - t_1) \cdots K(t_m - t_{m-1}) =
	\sum_{m = 0}^\infty K^{*m}(n) \,,
\end{equation}
where $K^{*m}(\cdot)$ denotes the $k$-fold convolution of $K(\cdot)$ with itself.
This shows that $q_n^+(0,0)$ may be viewed as the renewal mass function associated
to the renewal process with inter-arrival
\emph{defective} probability $K(\cdot)$. Since $K(n) \in R_{-1-1/\alpha}$
by the second relation in \eqref{eq:xqsmall}, the asymptotic behavior
of $q_n^+(0,0)$ as $n\to\infty$ is a classical result in heavy-tailed
renewal theory (cf. \cite[Theorem~A.4]{cf:Gia} or
\cite[Proposition~12]{cf:AFK} for a more general result):
\begin{equation*}
	q_n^+(0,0) \sim \frac{1}{(1- \sum_{n\in\N}K(n))^{2}} \, K(n)
	= \frac{1}{(1-\zeta)^{2}} \,\underline {\widehat V}^+(0)\,
	\frac{g(0)}{n\, a_n} \,.
\end{equation*}
By definition $\underline {\widehat V}^+(0) = 1 - \zeta$, cf. \eqref{eq:VV+0}
while $V^+(0) = (1-\zeta)^{-1}$, cf. the line following \eqref{eq:defV},
therefore the first relation in \eqref{eq:xqsmall} is proven for
case $x=y=0$.


\subsection{Proof of relation \eqref{eq:usefuly0}}
\label{sec:319}

Let us rewrite \eqref{eq:usefuly0} for convenience:
as $n\to\infty$
\begin{equation} \label{eq:usefuly}
	\bbP(\widehat \tau_1^- > n) \sim (1-\zeta)^{-1} \bbP( \tau_1^- > n) \,.
\end{equation}
Summing on the position of the last epoch before $n$ at which
the random walk hits zero, we can write
\begin{equation*}
	\bbP(\widehat \tau_1^->n)
	= \sum_{\ell=0}^n \bbP( S_i \ge 0\ \forall 0 \le i \le \ell\,,
	\ S_\ell = 0)
	\,\bbP( \tau_1^- > n-\ell) =
	\sum_{\ell=0}^n q_\ell^+(0,0)
	\,\bbP( \tau_1^- > n-\ell)\,.
\end{equation*}
From \eqref{eq:qn+00} and the preceding lines, we have $\sum_{\ell= 0}^\infty
q_\ell^+(0,0) = \sum_{k=0}^\infty \zeta^k = (1-\zeta)^{-1}$.
Since $\bbP( \tau_1^- > n-\ell) \sim \bbP( \tau_1^- > n)$ as $n\to\infty$
uniformly for $\ell \le \log n$ (we recall that $\bbP( \tau_1^- > n)$
is regularly varying), it follows that
\begin{equation*}
	\sum_{\ell=0}^{\lfloor \log n \rfloor} q_\ell^+(0,0)
	\,\bbP( \tau_1^- > n-\ell) \sim (1-\zeta)^{-1} \bbP( \tau_1^- > n) \,,
\end{equation*}
in agreement with \eqref{eq:usefuly}. It remains to show that the terms
with $\ell \ge \log n$ give a negligible contribution.
Since $\bbP( \tau_1^- > n-\ell) \le (const.) \bbP( \tau_1^- > n)$
for $\log n \le \ell \le n/2$, we can write
\begin{equation*}
	\sum_{\ell=\lfloor \log n \rfloor}^{\lfloor n/2 \rfloor} q_\ell^+(0,0)
	\,\bbP( \tau_1^- > n-\ell) \le (const.)
	\bigg(\sum_{\ell \ge \log n} q_\ell^+(0,0) \bigg)
	\bbP( \tau_1^- > n) = o\big(\bbP( \tau_1^- > n)\big) \,.
\end{equation*}
Finally, for $\ell \ge n/2$ we have
$q_\ell^+(0,0) \le (const.)/(n a_n)$ by \eqref{eq:xqsmall}.
Furthermore, we also have
$\sum_{\ell = \lfloor n/2 \rfloor}^{n} \bbP(\tau_1^- > n-\ell)
\sim \rho^{-1} n \bbP(\tau_1^- > n/2)$,
because $\bbP(\tau_1^- > m)$ is regularly varying with
index $-(1-\rho) > -1$, cf. \eqref{eq:sumup2}. It follows that
\begin{equation*}
	\sum_{\ell=\lfloor n/2 \rfloor}^n q_\ell^+(0,0)
	\,\bbP( \tau_1^- > n-\ell) \le (const.) \frac{1}{n a_n} \,
	n \, \bbP( \tau_1^- > n) = o\big(\bbP( \tau_1^- > n)\big) \,,
\end{equation*}
and the proof of \eqref{eq:usefuly} is complete.

\section{Proof of Lemma~\ref{th:density}}
\label{ap:B}

The continuity of the function $x\mapsto f_{\gep, t}(x)$ on $[0,\infty)$ is a consequence
of its definition:
\begin{equation}\label{5651}
f_{\gep, t}(x)=\frac{g_\varepsilon^\uparrow(x,0)}{g_t^\uparrow(0,0)} \,,
\end{equation}
thanks to the continuity of $x\mapsto g_\varepsilon^\uparrow(x,0)$,
which is proved in Lemma 3 of \cite{cf:Ur11}.

We recall that $\lambda^\uparrow(\dd z) = z^\alpha \, \dd z$ and
\begin{equation*}
	g^\uparrow_t(x, y) \,:=\, \frac{\bP^\uparrow_x(X_t \in \dd y)}{\lambda^\uparrow(\dd y)}\,.
\end{equation*}
Since $\{c^{-1/\alpha} X_{ct}\}_{t\ge 0}$ has the same law as $\{X_t\}_{t\ge 0}$,
it follows that
\begin{equation} \label{eq:scalingg}
	g^\uparrow_t(x,y) \,=\, \frac{1}{t^{1+1/\alpha}} \, g^\uparrow_1\bigg(\frac{x}{t^{1/\alpha}},
	\frac{y}{t^{1/\alpha}} \bigg) \,, \qquad \forall t > 0, \
	\forall x,y \in [0,\infty) \,.
\end{equation}
In particular
\begin{equation}\label{eq:garr}
	g^\uparrow_t(0,0) = t^{-1-1/\alpha} g^\uparrow_1(0,0)
\end{equation}
and equation \eqref{eq:asfgep} follows:
\begin{equation} \label{eq:ft0}
	f_{\epsilon,t}(0) \,=\, \frac{g^\uparrow_\epsilon(0,0)}{g^\uparrow_t(0,0)}
	\,=\, \bigg( \frac{t}{\epsilon} \bigg)^{1+1/\alpha} \,.
\end{equation}

It only remains to prove \eqref{eq:density}.
Let us consider the reflected L\'evy process $\widetilde X := -X$.
By part~1 of Theorem~1 in \cite{cf:Cha97},
the law of the terminal value of the meander of this process,
that is $g^-(x) \, \dd x$, is absolutely continuous
with respect to the law of $\widetilde X_1$ under $\bP^\uparrow$,
that is $\widetilde g^\uparrow_1(0,x) \, \lambda^\uparrow(\dd x)$
(with obvious notations), with Radon-Nikodym density proportional to $1/\widetilde U^-(x)
= x^{-\alpha\rho}$.
Therefore there is a constant $C > 0$ such that for all $x \ge 0$
\begin{equation*}
	g^-(x) \,=\, C \, \widetilde g^\uparrow_1(0,x) \, x^\alpha \, \frac{1}{x^{\alpha\rho}} \,,
	\qquad \forall x \ge 0 \,.
\end{equation*}
Now observe that $g^\uparrow_t(a,b) = g^\uparrow_t(b,a)$ for all $a,b \ge 0$,
as it is proved in Lemma~1 of~\cite{cf:Ur11}.
(More directly, it is enough to check this relation for $a,b > 0$, by continuity,
and this is evident from \eqref{eq:g+}.)
Therefore we can write
\begin{equation*}
	g^\uparrow_1(x,0) \,=\, \frac{1}{C} \, \frac{g^-(x)}
	{x^{\alpha(1-\rho)}} \,,
\end{equation*}
and by the scaling relation \eqref{eq:scalingg} it follows that
\begin{equation*}
	{g}_\gep^\uparrow(x,0)=\frac{\gep^{-\rho-1/\alpha}}
	{C}\frac{g^-(\epsilon^{-1/\alpha} x)}{x^{\alpha(1-\rho)}} \,.
\end{equation*}
Looking back at \eqref{5651} and \eqref{eq:garr} we then obtain
\begin{equation} \label{eq:asi}
	f_{\epsilon,t}(x) \,=\, \frac{\gep^{-\rho-1/\alpha}}
	{C \, g^\uparrow_1(0,0) \,
	t^{1-1/\alpha}}\frac{g^-(\epsilon^{-1/\alpha} x)}{x^{\alpha(1-\rho)}}
	\,=\, \frac{(t/\gep)^{1+1/\alpha}}
	{C \, g^\uparrow_1(0,0)}\frac{g^-(\epsilon^{-1/\alpha} x)}
	{(\epsilon^{-1/\alpha}x)^{\alpha(1-\rho)}} \,.
\end{equation}
Recalling \eqref{eq:DonSav} and \eqref{eq:C+-}, it follows from this expression that
\begin{equation*}
	\lim_{x \downarrow 0} f_{\epsilon, t}(x) \,=\,
	\frac{\mathtt{C}^- g(0)}{C \, g^\uparrow_1(0,0)} \, (t/\gep)^{1+1/\alpha} \,,
\end{equation*}
and looking back at \eqref{eq:ft0} we see that
$C \, g^\uparrow_1(0,0) = \mathtt{C}^- g(0)$.
But then \eqref{eq:asi} coincides with \eqref{eq:density},
and the proof is completed.


%


\bigskip

\end{document}